 \numberwithin{equation}{section}
\newcommand{\rt}{\frac{\rho^{1-\varepsilon}}{\rho^{\alpha-\varepsilon}+\tau}}
\newcommand{\rtp}{\frac{(\rho^+)^{1-\varepsilon}}{(\rho^+)^{\alpha-\varepsilon}+\tau}}
\newcommand{\rtk}{\frac{(\rho_k)^{1-\varepsilon}}{(\rho_k)^{\alpha-\varepsilon}+\tau}}
\newcommand{\rtj}{\frac{(\nj)^{1-\varepsilon}}{(\nj)^{\alpha-\varepsilon}+\tau}}
\newcommand{\mbd}{\textup{div}}
\newcommand{\sn}{\sqrt{u}}
\newcommand{\io}{\int_\Omega}
\newcommand{\wt}{W^{2,2}(\Omega)}
\newcommand{\nss}{{\nabla^2}}
\newcommand{\uat}{{ u^{\frac{\alpha}{2}}}}
\newcommand{\urt}{{ u^{\frac{\gamma}{2}}}}
\newcommand{\ua}{{ u^\alpha}}
\newcommand{\ur}{{ u^\gamma}}
\newcommand{\ue}{{ u^\eta}}
\newcommand{\nua}{{\nabla u^\alpha}}
\newcommand{\nur}{{\nabla u^\gamma}}
\newcommand{\ub}{{ u^\beta}}
\newcommand{\nurt}{{\nabla u^{\frac{\gamma}{2}}}}
\newcommand{\bg}{{\bf g}}
\newcommand{\po}{\partial\Omega}
\newcommand{\pot}{\Sigma_T}
\newcommand{\drk}{\Delta\rk}
\newcommand{\drka}{\Delta\rka}
\newcommand{\lno}{L^{\infty}(\Omega)}
\newcommand{\rnt}{(\rho+\tau)^n}
\newcommand{\rnk}{(\rho_k+\tau)^n}
\newcommand{\rnj}{(\nj+\tau)^n}
\newcommand{\rpnt}{\left(\rho^++\tau\right)^n}
\newcommand{\nja}{\nj^\alpha}
\newcommand{\rks}{\rho_k}
\newcommand{\rk}{\rho_k}
\newcommand{\nrk}{\nabla\rho_k}
		\newcommand{\nrks}{|\nabla\rho_k|^2}
		\newcommand{\kr}{K(\rho_k)}
		\newcommand{\mr}{M(\rho_k)}
		\newcommand{\lr}{L(\rho_k)}
\newcommand{\rka}{\rho_k^\alpha}
\newcommand{\rkb}{\rho_k^\beta}
\newcommand{\rkm}{\rho_k ^{\varepsilon-1}}
\newcommand{\rkam}{\rho_k ^{\alpha-1}}
\newcommand{\fk}{F_k}
\newcommand{\fj}{\overline{F}_j}
\newcommand{\nj}{\overline{u}_j}
\newcommand{\njt}{\tilde{u}_j}
\newtheorem{thm}{Theorem}[section]
\newtheorem{lem}{Lemma}[section]
\newtheorem{cor}{Corollary}[section]
\begin{document}
\title[Fourth Order Parabolic Equations]{A Class of Functional Inequalities and their Applications to Fourth-Order Nonlinear Parabolic Equations}
\author{Jian-Guo Liu and Xiangsheng Xu}\thanks
{ Liu's address: Department of Physics and Department of Mathematics, Duke University, Durham, NC 27708, {\it Email:} jliu@phy.duke.edu.\\
	Xu's address: Department of Mathematics and Statistics, Mississippi State
University, Mississippi State, MS 39762.
{\it Email}: xxu@math.msstate.edu. {\it Commun. Math. Sci.}, to appear.}
 \keywords{ Existence, Nonlinear fourth order
parabolic equations, Thin-film equation, Quantum drift-diffusion
model, Functional inequalities. } \subjclass{ 35D30, 35A01, 35K25}
\begin{abstract} We study a class of fourth order nonlinear parabolic equations which include the thin-film equation and the quantum drift-diffusion model as special cases. We investigate these equations by first developing functional inequalities of the type 
$$
\int_\Omega  u^{2\gamma-\alpha-\beta}\Delta u^\alpha\Delta u^\beta dx
\geq c\int_\Omega|\Delta u^\gamma
|^2dx,
$$
which seem to be of interest on their own right. 
\end{abstract}
\maketitle

\section{Introduction}\label{sec1}
Let $T>0$ and $\Omega$ be a domain in $\mathbb{R}^N$ with boundary
$\partial\Omega$. We consider the existence of a solution to the
problem
\begin{eqnarray}
\partial_tu+\mbd[u^n\nabla(u^{\alpha-1}\Delta\ua)]&=&0\
\ \ \mbox{in $\Omega_T$},\label{311} \\
\nabla u\cdot\nu=u^n\nabla(u^{\alpha-1}\Delta\ua)\cdot\nu &=&
0 \ \ \
\mbox{on $\Sigma_T$,}\\
u(x,0)&=&u_0(x)\geq 0 \ \ \ \mbox{on $\Omega$},\label{312}
\end{eqnarray}
where $\Omega_T= \Omega\times(0, T]$, $\Sigma_T=\partial\Omega\times(0,T]$, $\nu$ is the unit outward normal to $\po$. Numbers
$ n, \alpha\in (0,\infty)$ and  the functions $\bg=\bg(x,t), \ u_0(x)$ are
given data whose precise assumptions will be made later.

Fourth-order nonlinear parabolic equations arise in a variety of physical settings \cite{DFL,EG,GR,LX}. Two well-known examples are the thin film equation and the quantum drift-diffusion model, both of which are special cases of \eqref{311}. In a typical thin film equation, we have that $\alpha=1, n>0$, while parameter values of
$n=1, \alpha=\frac{1}{2}$ give us the quantum drift-diffusion equation without the drift term. See, e.g., \cite{GST,X4} for the inclusion of this term. Note that
the drift term is a lower order term, and dropping it simply implies that we have assumed that
it can be dominated by the principal term in the equation. 
Nonetheless, 
extensive research work has been done on these two
types of problems. We refer the reader to \cite{GST,J2,MMS,X2} and the references therein.

The objective of our work is to present a unified mathematical approach to these two very different physical problems. This is done via functional inequalities of the type
\begin{equation}\label{dsmm1}
I(u)\equiv\int_\Omega u^{2\gamma-\alpha-\beta}\Delta\ua\Delta\ub dx
\geq c\int_\Omega\left(\Delta\ur\right)^2dx\ \ \ \mbox{for all $u\in W_\gamma$},
\end{equation}
where
\begin{equation}\label{1.13}
W_\gamma= \{u\geq 0: \ur\in\wt, \nabla\ur\cdot\nu=0 \ \ 
\mbox{on $\po$}\}.
\end{equation}
Obviously, the validity of the above inequality  depends on $\Omega, \alpha, \beta$ and
$\gamma$. We will focus on the case where $\Omega$ is bounded and
convex. Then a result of \cite{G} asserts that
\begin{equation}\label{n231}
\int_\Omega(\Delta\ur)^2dx\geq\int_\Omega|\nss\ur|^2dx
\end{equation} 
for all $u\in W_\gamma$, where $\nss\ur$ denotes the Hessian of $\ur$. Thus a slightly weaker version is the inequality
\begin{equation}\label{1dsmm1}
I(u)\equiv\int_\Omega u^{2\gamma-\alpha-\beta}\Delta\ua\Delta\ub dx\geq c\int_\Omega\left(\nss\ur\right)^2dx\ \ \ \mbox{for all $u\in W_\gamma$}.
\end{equation}
Several known inequalities are special cases of this.
If $\beta=1, \alpha=\gamma=\frac{1}{2}$, then \eqref{1dsmm1} is established for box domains with sides parallel to the coordinate planes in \cite{CD} (also see \cite{JM}).
It turns out \cite{GST, MMS} that \eqref{1dsmm1} is still valid if $\beta=1, \gamma=\alpha\in\left(\frac{(N-1)^2}{2N^2+1}, \frac{3}{2}\right)$, and $\Omega$ is a bounded convex domain. The inequalities in \cite{GST, MMS} are formulated in a measure-theoretic setting. See \cite{X3} for a more direct
approach.

The significance of functional inequalities of the type \eqref{dsmm1} lies in the fact that the integrand on the left-hand side of \eqref{dsmm1} can change signs.
In essence, they are the nonlinear version of the G\aa rding inequality. To illustrate how they arise naturally in the study of fourth order nonlinear partial differential equations,
we proceed to make some
formal analysis of \eqref{311}-\eqref{312}. That is, we assume that $u$ is a positive, smooth solution of \eqref{311}.
Use $\ub$, where $\beta>0$, as a test function in \eqref{311} to derive
\begin{eqnarray}
\frac{1}{\beta+1}\frac{d}{dt}\int_\Omega u^{\beta+1}dx+\frac{\beta}{n+\beta}\int_\Omega u^{\alpha-1}
\Delta\ua\Delta u^{n+\beta}dx=0.\label{rma11}
\end{eqnarray}
By \eqref{dsmm1}, we have 
\begin{equation}\label{119}
\int_\Omega u^{\alpha-1}
\Delta\ua\Delta u^{n+\beta}dx\geq c\int_\Omega\left(\Delta u^{\frac{2\alpha+n+\beta-1}{2}}\right)^2dx.
\end{equation}
For the moment, we ignore the restrictions under which the above inequality holds. We will address this issue in Section \ref{sec2}. 
Integrate \eqref{rma11} to obtain
\begin{equation}
\max_{0\leq t\leq T}\int_\Omega u^{\beta+1}(x,t)dx+\int_{\Omega_T}\left(\Delta u^{\frac{2\alpha+n+\beta-1}{2}}\right)^2dxds\leq c.\label{111}
\end{equation}

Our study of \eqref{dsmm1} is inspired by the integration by parts rule proved by Gianazza et al. \cite{GST} and by J\"{u}ngel and Mattes \cite{JM}. We also refer the reader to \cite{JM2} for the development of an algebraic technique for dealing with such formulas.
The framework we have developed here is also algebraic in nature, but it seems to be more direct and easier to use. This can best be illustrated by the application of our method to the standard thin film 
\begin{equation}
\partial_tu+\textup{div}\left(u^n\nabla\Delta u\right)=0.
\end{equation}
In this case, the second integral in \eqref{rma11} becomes
$$\int_{\Omega}\Delta u\Delta u^{n+\beta}dx.$$
This immediately puts us in a position to apply Lemma \ref{l2.5} in Section \ref{sec2}, from whence follows that for each
$\beta\in (\frac{1}{2}-n, 2-n)$ there is a positive number $c$ such that
$$ \int_{\Omega}\Delta u\Delta u^{n+\beta}dx\geq c \int_{\Omega}\left(\Delta u^{\frac{n+\beta+1}{2}}\right)^2dx.$$
Of course, this result is well-known, see, e.g., \cite{JM2} and the references therein. 
Also notice how easy it is for us to prove Lemma \ref{l2.5} in our framework. More importantly,
our method has led to the discovery of Corollary \ref{cor2.2} in Section \ref{sec2}. It is this corollary that enables us to  solve a problem left open  in \cite{MMS}. 

We can easily foresee other potential applications for the functional inequalities developed in this paper. An immediate example
is the study of epitaxial growth of thin films ( see \cite{AKW,GLL}) and the references therein). A family of continuum models has been established, one of which has the form
\begin{equation}
\partial_t u+u^2\Delta^2 u^3=0\ \ \mbox{in}\ \Omega_T.\label{rat11}
\end{equation} 
Using $\ub$ as a test function yields
\begin{equation}
\frac{1}{\beta+1}\frac{d}{dt}\int_{\Omega} u^{\beta+1}dx+\int_{\Omega}\Delta u^3\Delta u^{\beta+2}dx=0,
\end{equation}
and Lemma \ref{l2.5} in Section \ref{sec2} becomes applicable. Of course, the resulting inequality is far from enough to obtain an existence assertion for \eqref{rat11}. However,  the idea behind the derivation of the inequality  can lead to the discovery of additional estimates. 
Since our inequalities do not depend on the space dimension $N$, their applications will inevitably lead to the relaxation of the restrictions on $N$ in previous studies such as \cite{GLL}.   

\begin{thm}\label{thm1.1} Let $\Omega$ be a bounded convex domain in $\mathbb{R}^N$. Assume:
	\begin{enumerate}
		\item[(H1)] $\alpha\in [1,\frac{3}{2}), n\in[1, 1+\frac{\sigma}{4})$,
		where
		\begin{equation}\label{1115}
		\sigma=\left\{\begin{array}{ll}
		1&\mbox{if $N<4$,}\\
		\frac{4}{N}&\mbox{if $N>4$,}\\
		\mbox{any number in $(0,1)$}&\mbox{if $N=4$;}
		\end{array}\right.
		\end{equation}
		\item[(H2)] $u_0\in L^\infty(\Omega)$ with $\inf_{\Omega}u_0>0$.
	\end{enumerate}
	Then there is a weak solution to \eqref{311}-\eqref{312} in the following sense:
	\begin{enumerate}
		\item[(C1)] $u\in L^{2\alpha+\sigma}(\Omega_T)$ with $u\geq 0$ on $\Omega_T$, $\ua
		\in L^2(0,T; W^{2,2}(\Omega))$;
		\item[(C2)] $\nabla\ua\cdot\nu=0$ a.e. on $\Sigma_T$;
		\item[(C3)] for each $\xi\in C^\infty(\overline{\Omega_T})$ with $\xi(x,T)=0$ and $\nabla\xi\cdot\nu=0$ on $\Sigma_T$ there holds
		\begin{eqnarray}
		\lefteqn{-\int_{\Omega_T} u\partial_t\xi dxdt-\io u_0(x)\xi(x,0)dx}\nonumber\\
		&&+\int_{\Omega_T}\left(\frac{2n}{\alpha} u^{n+\frac{\alpha}{2}-1}\nabla\uat\Delta\ua\nabla\xi+ u^{\alpha+n-1}\Delta\ua\Delta\xi\right)dxdt=0.\label{116}
		\end{eqnarray}
	\end{enumerate}
\end{thm}
We would like to make some remarks about Theorem \ref{thm1.1}. 
We can conclude
from Lemma \ref{l22} below that $\nabla\uat\in \left(L^4(\Omega_T)\right)^N$. Thus each integral in \eqref{116} makes sense. Assumption (H1) is largely due to the restrictions for \eqref{dsmm1} to hold.
\begin{thm}\label{thm1.2} Let $\Omega$ be a bounded convex domain in $\mathbb{R}^N$ and (H2) hold. Assume:
	\begin{enumerate}
		\item[(H3)] $\alpha=1, n\in(\frac{1}{2}, 1+\frac{\sigma}{4})$, where $\sigma$ is given as in \eqref{1115}.
	\end{enumerate}
	Then there is a weak solution to \eqref{311}-\eqref{312} in the sense of (C3).
\end{thm}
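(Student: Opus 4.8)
\medskip
\noindent\textbf{Proof proposal for Theorem 1.2.}
The plan is to run the approximation-and-compactness argument of Theorem 1.1, with one structural simplification and one substitution. At $\alpha=1$ the flux in \eqref{311} collapses to $u^n\nabla\Delta u$, and the functional inequality \eqref{119} needed to close the estimates reduces to
\[
\io\Delta u\,\Delta u^{n+\beta}\,dx\;\geq\;c\io\left(\Delta u^{\frac{n+\beta+1}{2}}\right)^2dx ,
\]
which by Lemma 2.5 holds for every $\beta\in\bigl(\tfrac12-n,\,2-n\bigr)$. Since this range of $\beta$ is strictly wider than the one Corollary 2.2 provides at $\alpha=1$, replacing Corollary 2.2 by Lemma 2.5 throughout the argument for Theorem 1.1 is exactly what relaxes the hypothesis on $n$ to $\bigl(\tfrac12,\,1+\tfrac{\sigma}{4}\bigr)$. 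The lower bound $n>\tfrac12$ moreover guarantees that every power of $u$ in the weak formulation \eqref{116} is non-negative — at $\alpha=1$ the first flux coefficient is $2n\,u^{n-\frac12}$ — so that no negative power of $u$ must be controlled near $\{u=0\}$.

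I would begin with the two-parameter approximation used for Theorem 1.1: an implicit time-discretization with step $\tau$, together with a regularization parameter $\delta>0$ lifting the data off zero, so that each step-solution $\rho_k$ is strictly positive and the degenerate mobility $(\rho_k+\delta)^n$ is harmless even for $n<1$; each discrete step is a nonlinear elliptic problem with homogeneous Neumann conditions, solved by a Galerkin or variational fixed-point argument. On these approximants I would establish the uniform bounds. Testing with the constant $1$ gives conservation of mass, $\io\rho_k\,dx=\io u_0\,dx$. Testing with $\rho_k^{1-n}$ (the logarithmic entropy $u\ln u-u$ in the borderline case $n=1$) and summing over $k$ gives, through Lemma 2.5 and the discrete form of \eqref{rma11} — using $1-n\in\bigl(\tfrac12-n,2-n\bigr)$, together with conservation of mass to bound $\io\rho_k^{2-n}\,dx$ when $n\geq1$ — a uniform bound on $\sum_k\tau\io(\Delta\rho_k)^2\,dx$, whence \eqref{n231} controls $\rho_k$ in $L^2\bigl(0,T;W^{2,2}(\Omega)\bigr)$. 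Combined with conservation of mass, a Gagliardo--Nirenberg interpolation yields the space-time integrability $u\in L^{2\alpha+\sigma}(\Omega_T)=L^{2+\sigma}(\Omega_T)$ of (C1); Lemma~\ref{l22} supplies $\nabla u^{\frac12}\in\bigl(L^4(\Omega_T)\bigr)^N$; and the discrete equation gives a uniform bound on the discrete time derivative in a negative Sobolev space.

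It remains to pass to the limit. Since the discrete flux is bounded in some $L^s(\Omega_T)$ with $s>1$ — which is where $n<1+\tfrac{\sigma}{4}$ enters, see below — Aubin--Lions yields, along a subsequence, strong convergence of the approximate solutions to $u$ in $L^p(\Omega_T)$ and a.e., and, using in addition the $L^2(0,T;W^{2,2})$ and time-derivative bounds, strong convergence of $u$ and of $\nabla u$ in $L^2(\Omega_T)$. Letting $\delta\to0$ and $\tau\to0$ one obtains $u\geq0$ and recovers $\nabla u\cdot\nu=0$ on $\Sigma_T$ as the trace of the $W^{2,2}(\Omega)$-limit. The main obstacle is the passage to the limit in the two flux terms of \eqref{116}, which at $\alpha=1$ read $2n\,u^{n-\frac12}\nabla u^{\frac12}\,\Delta u\,\nabla\xi$ and $u^{n}\,\Delta u\,\Delta\xi$. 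For the second one uses $u_k\to u$ strongly in $L^{2n}(\Omega_T)$ — valid since $2n<2+\sigma$ — hence $u_k^{n}\to u^{n}$ strongly in $L^2(\Omega_T)$, against $\Delta u_k\rightharpoonup\Delta u$ weakly in $L^2(\Omega_T)$. For the first one must first upgrade $\nabla u_k^{\frac12}\to\nabla u^{\frac12}$ to strong convergence in $L^q(\Omega_T)$ for every $q<4$, by combining the $L^4$ bound with a.e.\ convergence — the usual somewhat delicate bookkeeping on the degenerate set $\{u=0\}$, where $\nabla u=0$ — and then $u_k^{n-\frac12}\nabla u_k^{\frac12}\to u^{n-\frac12}\nabla u^{\frac12}$ strongly in $L^{s'}(\Omega_T)$ with $\tfrac{1}{s'}=\tfrac{n-1/2}{2+\sigma}+\tfrac14$; it is precisely the requirement $s'>2$, i.e.\ $\tfrac{n-1/2}{2+\sigma}+\tfrac14<\tfrac12$, equivalently $n<1+\tfrac{\sigma}{4}$, that allows this product to be paired with $\Delta u_k\rightharpoonup\Delta u$ in $L^2(\Omega_T)$ and passed to the limit. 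The remaining points — weak lower semicontinuity of the dissipation term, recovery of $\io u_0(x)\xi(x,0)\,dx$ from the discrete time derivative, and finiteness of each integral in (C3) — are routine.
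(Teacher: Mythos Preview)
Your high-level plan is sound: at $\alpha=1$ the relevant functional inequality is indeed Lemma~2.5, the entropy test $\beta=1-n$ (or $\ln u$ at $n=1$) yields the dissipation $\int(\Delta u)^2$ directly, and your exponent bookkeeping for the limit passage --- in particular the computation $\frac{n-1/2}{2+\sigma}+\frac14<\frac12\Leftrightarrow n<1+\frac{\sigma}{4}$ --- is correct.

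The paper takes a different route at the approximation layer. There is no $\delta$-lifting of the mobility; the paper reuses verbatim the single-parameter scheme \eqref{293}--\eqref{295} already built for Theorem~1.1 (a coupled second-order elliptic system carrying several $\tau$-dependent regularizers such as $\tau F_k$ and $\tau\rho_k^p$, and producing strictly positive $\rho_k$ by Lemma~3.1). The basic estimates of Lemmas~3.3--3.6 already cover $\alpha=1$ and all $n\in(0,2-\varepsilon)$; the only obstruction to repeating the proof of Theorem~1.1 is Lemma~4.1, which needs $n\ge1$ to conclude $\tau\overline{F}_j\to0$ in $L^1$. The heart of Section~5 (Lemma~5.1) therefore chooses not your $\beta=1-n$ but a $\beta\in(\tfrac12,\min\{1,n\})$ and tests with $M(r)=\int_1^r\beta s^{\beta-1}(s+\tau)^{-n}\,ds$; the point of $\beta<n$ is that $M(r)\to-\infty$ as $r\to0^+$, and this divergence is precisely what substitutes for the hypothesis $n\ge1$ in killing the regularizer $\tau\overline{F}_j$. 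Thus in the paper $n>\tfrac12$ enters as the condition for $(\tfrac12,n)$ to be nonempty, not (only) as ``no negative powers in the flux'' as you put it.

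Your alternative --- essentially the Bernis--Friedman style regularization --- may well work, but one step is a genuine gap as written: ``lifting the data off zero'' does not propagate through a fourth-order elliptic step (no maximum principle), so for $n>1$ the test function $\rho_k^{1-n}$ is not a priori admissible. In that approach one regularizes the \emph{entropy} as well, testing with $G_\delta'(\rho_k)$ where $G_\delta''(s)=(s+\delta)^{-n}$, and recovers nonnegativity of $\rho_k$ only a posteriori from the entropy bound; you should say this explicitly. The paper avoids this issue entirely because its scheme guarantees $\rho_k\ge c_0>0$ at each step.
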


In comparison with previous results on the thin-film equation (see, e.g., \cite{CPT,DGG1,GR,GR2}), this theorem has removed all the restrictions on space dimensions.
Thus this is truly a multi-dimensional result. The trade-off is that our assumption on $n$ in the theorem is weaker than those in \cite{DGG1,GR2}. It is worth noting that most of the existing results on non-linear fourth-order parabolic equations involve restrictions on the space dimensions with
the one-dimensional problems attracting the most attention. See ,e.g., (\cite{BF,BP,DFL,DGG,WBB}), where various properties of solutions are investigated. More recent results of this nature on the thin-film equation can be found in \cite{FG,GGKO,GP}.

Our approach to the question of existence is to construct a sequence of smooth, positive approximate solutions such that the calculations similar to \eqref{rma11}-\eqref{111} can be employed. A well-known difficulty in the study of fourth-order equations is that the maximum principle is no longer true. In fact, the heat kernel for the heat biharmonic equation changes signs. Thus arguments based upon the maximum principle for second order equations do not work here. We must rely on the nonlinear structure of our equation to obtain non-negative solutions. It turns out that the term $u^{\alpha-1}=\frac{1}{u^{1-\alpha}}$ in \eqref{311} plays a key role in the existence of non-negative solutions. The case where $n=1, \alpha\leq 1$ has already been considered in \cite{MMS,X4}, while the case where $\alpha>1$ is left open there. One contribution of this paper is that we have completely solved this open problem (Theorem 1.1). Even though we have not been able to find a physical application for this case, it is still very interesting from the point of view of mathematical analysis because this is the case where the gradient flow theory fails \cite{MMS}.  The key to our success seems to be that we have found a right way to approximate the term $u^{1-\alpha}$ with the exponent being negative.

The optimal transport theory has been successfully employed to treat
many different types of parabolic equations as gradient flows of various “entropy
functionals” for various “transportation metrics”, the canonical example being the
regular scalar heat equation viewed by Jordan, Kinderlehrer and Otto \cite{JKO} as the
gradient flow of the Boltzmann entropy for the quadratic Monge-Kantorovich MK2
(frequently named Wasserstein) metric. We have seen a very large body of work done on this
subject in the last 20 years ( in the study of the heat equation in a very general framework,
porous-medium equations, thin-film flow equations, chemotaxis models, etc.. See \cite{GST,MMS,LMS} and the references therein
as examples.).
However, in the generality considered in Theorems 1.1 and 1.2, the
transport theory is no longer applicable \cite{MMS}. We discretize the time derivative in \eqref{311} and transform it into a system of two second order elliptic equations. Our approximation scheme seems to be standard. However, the genius is in the details, and we have to overcome
numerous technical difficulties for it to work here. On the one hand, we need to introduce new terms in our
approximate problems in order to ensure high regularity and positivity of our approximate solutions. On the other hand, we have to make sure that these new terms do not destroy the essential a prior estimates that hold for positive, smooth solutions of the original equations.  Striking a suitable balance between the two constitutes the core of our development. 

This paper is organized as follows. In section \ref{sec2} we develop a class of functional inequalities. Section \ref{sec3} is devoted to the fabrication of our approximation schemes. Here the key is how to handle the term $u^{\alpha-1}$. Then we proceed to
obtain discretized versions of the a priori estimates that hold for positive, smooth solutions of the original equations, which eventually leads to the establishment of Theorems 1.1 and 1.2 in the two subsequent sections.

\section{Functional Inequalities}\label{sec2}
\setcounter{equation}{0}

In this section we study the functional inequality \eqref{dsmm1}. We will focus on the case where $\Omega$ is
a bounded convex domain in $\mathbb{R}^N$. Our method is algebraic in nature. In this regard, it is similar to \cite{JM2}.

The key to our development is the following lemma, which is a substantial improvement over Lemma 2.1 in \cite{X4}.
\begin{lem}\label{l21}Let  $\Omega$ be a bounded domain in $\mathbb{R}^N$ with Lipschitz
	boundary $\po$. Assume that
	\begin{equation}
	\alpha\ne 0.
	\end{equation} 
	Then we have
	\begin{eqnarray}
	\lefteqn{		\int_\Omega u^{2\alpha-2\beta}|\nss u^\beta|^2
		dx\geq \frac{2\beta^2}{(2+N)\alpha^2}\int_\Omega|\nabla^2\ua|^2dx}\nonumber\\
	&&+\frac{\beta^2}{(2+N)\alpha^2}\int_\Omega(\Delta\ua)^2dx
	+\frac{16\beta^2(\alpha-\beta)(\alpha-3\beta)}{(2+N)\alpha^4}\int_\Omega|\nabla\uat|^4dx\label{5.2}
	\end{eqnarray}
	for all $u\in W_\alpha$.
\end{lem}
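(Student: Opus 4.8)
The plan is to prove the pointwise-plus-integration-by-parts identity from which \eqref{5.2} follows, and the natural approach is to first reduce everything to a single scalar field. Set $v=\ua$, so that $u^\beta=v^{\beta/\alpha}$; writing $r=\beta/\alpha$, the left-hand side becomes $\int_\Omega v^{2-2r}|\nss(v^r)|^2\,dx$. The idea is to expand $\nss(v^r)$ in terms of $\nss v$ and $\nabla v\otimes\nabla v$: one has $\partial_i\partial_j(v^r)=r v^{r-1}\partial_i\partial_j v + r(r-1)v^{r-2}\partial_i v\,\partial_j v$. Squaring and multiplying by $v^{2-2r}$ produces three terms: $r^2|\nss v|^2$, a cross term $2r^2(r-1)v^{-1}(\nss v:\nabla v\otimes\nabla v)$, and $r^2(r-1)^2 v^{-2}|\nabla v|^4$. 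So the problem is entirely about controlling $\int_\Omega v^{2-2r}|\nss(v^r)|^2$ from below by a combination of $\int|\nss v|^2$, $\int(\Delta v)^2$ and $\int v^{-2}|\nabla v|^4$ — i.e. exactly the three terms on the right-hand side once one undoes the substitution ($v^{-2}|\nabla v|^4 = \frac{16}{\alpha^4}|\nabla\uat|^4$, modulo constants, and $|\nss v|^2=|\nss\ua|^2$, $(\Delta v)^2=(\Delta\ua)^2$).

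The core of the argument will be an integration-by-parts identity relating $\int v^{-1}\nss v:\nabla v\otimes\nabla v$ and $\int v^{-2}|\nabla v|^4$ to $\int|\nss v|^2$ and $\int(\Delta v)^2$, with no uncontrolled boundary terms. The mechanism I would use: integrate $\int v^{-1}(\nss v:\nabla v\otimes\nabla v)$ by parts to move one derivative, which should generate, up to sign, a multiple of $\int v^{-2}|\nabla v|^4$, a multiple of $\int (\Delta v)\,v^{-1}|\nabla v|^2$, and boundary integrals on $\po$. A second integration by parts on $\int (\Delta v)\,v^{-1}|\nabla v|^2$ converts it into $\int|\nss v|^2$-type and $\int v^{-2}|\nabla v|^4$-type terms plus more boundary terms. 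The Neumann-type condition $\nabla\ua\cdot\nu=0$ on $\po$ built into $W_\alpha$ (note $W_\alpha$ in \eqref{1.13} with $\gamma=\alpha$) is precisely what kills the boundary terms — this is where the hypothesis $u\in W_\alpha$ enters and why the lemma is stated for that class. The extra factor $\frac{1}{2+N}$ on the right is the telltale sign that somewhere one uses the elementary pointwise inequality $|\nss v|^2 \ge \frac{1}{N}(\Delta v)^2$ together with the splitting of $\nss v$ into trace and trace-free parts, or more likely an averaging/Cauchy–Schwarz step of the form $|a|^2 \ge \frac{1}{2+N}|a|^2$ absorbing the cross term; the $(2+N)$ suggests combining $N$ second-derivative directions with the two "entries" of the rank-one tensor $\nabla v\otimes\nabla v$.

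Concretely the steps are: (i) substitute $v=\ua$, $r=\beta/\alpha$, and expand $v^{2-2r}|\nss(v^r)|^2$ pointwise into the three monomials above; (ii) establish, via two integrations by parts using $\nabla v\cdot\nu=0$ on $\po$, the identity expressing $\int v^{-1}(\nss v:\nabla v\otimes\nabla v)\,dx$ as a linear combination of $\int|\nss v|^2\,dx$, $\int(\Delta v)^2\,dx$ and $\int v^{-2}|\nabla v|^4\,dx$; (iii) substitute this back and collect coefficients; (iv) apply Cauchy–Schwarz / the pointwise trace inequality to bound any residual cross terms, picking up the factor $\frac{1}{2+N}$, and verify that the coefficient of $\int v^{-2}|\nabla v|^4$ comes out to the stated $\frac{16\beta^2(\alpha-\beta)(\alpha-3\beta)}{(2+N)\alpha^4}$ after undoing the substitution; (v) undo the substitution in the other two terms. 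The main obstacle I anticipate is step (ii): keeping track of all the boundary terms and making sure each one is annihilated by the single condition $\nabla\ua\cdot\nu=0$ (one may need to also use convexity of $\Omega$, i.e. nonnegativity of the second fundamental form, to discard a boundary term with the right sign, since \eqref{n231} already relies on convexity) — and, downstream, checking that the algebra produces exactly the factorization $(\alpha-\beta)(\alpha-3\beta)$ rather than some nearby quadratic, since that factorization is what will determine the admissible range of $\beta$ in the applications.
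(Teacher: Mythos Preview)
Your plan has a real gap at step (ii). The identity you hope for---expressing $\int_\Omega v^{-1}(\nss v:\nabla v\otimes\nabla v)\,dx$ as a linear combination of $\int|\nss v|^2$, $\int(\Delta v)^2$, and $\int v^{-2}|\nabla v|^4$ via integration by parts alone---does not exist. Integrating the cross term by parts (moving one derivative, using $\nabla v\cdot\nu=0$) yields
\[
2\int_\Omega v^{-1}(\nss v:\nabla v\otimes\nabla v)\,dx \;=\; \int_\Omega v^{-2}|\nabla v|^4\,dx \;-\; \int_\Omega v^{-1}|\nabla v|^2\,\Delta v\,dx,
\]
and your ``second integration by parts'' on $\int v^{-1}|\nabla v|^2\Delta v$ reproduces exactly the same relation; the two are not independent, and neither $\int|\nss v|^2$ nor $\int(\Delta v)^2$ ever appears. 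Bringing them in via a Bochner identity $\int(\Delta v)^2=\int|\nss v|^2+\text{boundary}$ would introduce a second-fundamental-form boundary term that $\nabla v\cdot\nu=0$ does \emph{not} control---and the lemma is stated for arbitrary Lipschitz domains, not convex ones, so your speculated use of convexity is unavailable here.

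The idea you are missing, and the heart of the paper's argument, is to expand \emph{two} quantities pointwise rather than one. Besides squaring the Hessian relation (your step (i)), also take its trace and square that, obtaining
\[
v^{2-2r}\bigl(\Delta(v^r)\bigr)^2 \;=\; r^2(\Delta v)^2 \;+\; 2r^2(r-1)\,v^{-1}|\nabla v|^2\Delta v \;+\; r^2(r-1)^2\,v^{-2}|\nabla v|^4 .
\]
Now you have two pointwise identities, one containing the Hessian cross term $v^{-1}(\nss v:\nabla v\otimes\nabla v)$ and one containing the Laplacian cross term $v^{-1}|\nabla v|^2\Delta v$. Integrate both and add them in the ratio $1:\tfrac12$; the single integration-by-parts identity displayed above then eliminates the combined cross terms \emph{exactly}, leaving an equality between
\[
\int_\Omega v^{2-2r}|\nss(v^r)|^2\,dx \;+\; \tfrac12\int_\Omega v^{2-2r}\bigl(\Delta(v^r)\bigr)^2\,dx
\]
and the three-term right-hand side you want. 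The inequality---and the factor $\tfrac{1}{2+N}$---then enters in one stroke via the elementary pointwise bound $(\Delta w)^2\le N|\nss w|^2$ applied with $w=v^r$, which gives the left side above $\le \tfrac{2+N}{2}\int v^{2-2r}|\nss(v^r)|^2$. This is the only inequality used; no Cauchy--Schwarz on cross terms, no convexity, and the single Neumann condition kills the one boundary term that arises. The factorization $(\alpha-\beta)(\alpha-3\beta)$ then drops out of the bookkeeping automatically.
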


\begin{proof}If $\beta=0$, then the lemma is trivially true. Thus assume
	that $\beta\ne 0$. Note that
	\begin{equation}\label{n2.3}(\Delta\ub)^2\leq N|\nss\ub|^2.
	\end{equation}
	Thus if $\alpha=\beta$, then \eqref{5.2} is still true. From here on, we let $$
	\beta\ne \alpha.$$
	We can also assume that $u\in  W_\alpha$ is 
	bounded away from $0$ below. If this is not the case, we can always replace $u$ by
	\begin{equation*}
	\left(u^\alpha+\varepsilon\right)^{\frac{1}{\alpha}}
	\end{equation*}
	and then let $\varepsilon\rightarrow 0^+$.
	The same is
	understood in the subsequent calculations in this section.
	We compute, for $i,j=1, \cdots,
	N $, that
	\begin{eqnarray}
	\partial_i\ub&=&\partial_i(\ua)^{\frac{\beta}{\alpha}}=\frac{\beta}{\alpha}u^{\beta-\alpha}\partial_i\ua,\\
	\partial^2_{ij}\ub &=&
	\frac{\beta(\beta-\alpha)}{\alpha^2}u^{\beta-2\alpha}\partial_i\ua\partial_j\ua+\frac{\beta}{\alpha}u^{\beta-\alpha}\partial^2_{ij}\ua.\label{2.3}
	\end{eqnarray}
	First, we let $i=j$ in the above equation and then sum up over $i$ to derive
	\begin{equation}
	\Delta\ub=\frac{\beta(\beta-\alpha)}{\alpha^2}u^{\beta-2\alpha}|\nabla\ua|^2+\frac{\beta}{\alpha}u^{\beta-\alpha}\Delta\ua.\label{2.4}
	\end{equation}
	Square both sides  of this equation and multiply through the resulting
	equation by $\frac{\alpha^2}{\beta^2}u^{2\alpha-2\beta}$ to arrive at
	\begin{equation}
	\frac{\alpha^2}{\beta^2}u^{2\alpha-2\beta}|\Delta\ub
	|^2=|\Delta\ua|^2+2\frac{\beta-\alpha}{\alpha}\frac{1}{\ua}|\nabla\ua|^2
	\Delta\ua+\left(\frac{\beta-\alpha}{\alpha}\right)^2\frac{1}{u^{2\alpha}}|\nabla\ua|^4.\label{22.5}
	\end{equation}
	Square both sides of \eqref{2.3}, multiply through the resulting
	equation by $\frac{\alpha^2}{\beta^2}u^{2\alpha-2\beta}$, and then
	sum up $i, j$ to obtain
	\begin{equation}
	\frac{\alpha^2}{\beta^2}u^{2\alpha-2\beta}|\nabla^2\ub
	|^2=|\nabla^2\ua|^2+2\frac{\beta-\alpha}{\alpha}\frac{1}{\ua}\nabla\ua\cdot
	\nabla^2\ua\nabla\ua+\left(\frac{\beta-\alpha}{\alpha}\right)^2\frac{1}{u^{2\alpha}}|\nabla\ua|^4.\label{2.5}
	\end{equation}
	Note that $\nabla\ua=2\uat\nabla\uat$. Keeping this in mind, we can rewrite \eqref{2.5} and \eqref{22.5} as
	\begin{eqnarray}
	2\nabla\uat\cdot
	\nabla^2\ua\nabla\uat&=&
	\frac{\alpha^3}{4(\beta-\alpha)\beta^2}u^{2\alpha-2\beta}|\nabla^2\ub
	|^2-\frac{\alpha}{4(\beta-\alpha)}|\nabla^2\ua|^2\nonumber\\	& &-\frac{4(\beta
		-\alpha)}{\alpha}|\nabla\uat|^4,\label{22.6}\\
	|\nabla\uat|^2
	\Delta\ua&=&
	\frac{\alpha^3}{8(\beta-\alpha)\beta^2}u^{2\alpha-2\beta}|\Delta\ub
	|^2-\frac{\alpha}{8(\beta-\alpha)}|\Delta\ua|^2\nonumber\\& &-\frac{2(\beta
		-\alpha)}{\alpha}|\nabla\uat|^4.\label{22.7}
	\end{eqnarray}
	Note that 
	\begin{eqnarray}
	u^{-2\alpha}|\nua|^4&=&u^{-2\alpha}|\nua|^2\nua\cdot\nabla\ua \nonumber\\
	&=&\textup{div}\left( u^{-2\alpha}|\nua|^2\nua\ua\right)-\textup{div}\left( u^{-2\alpha}|\nua|^2\nua\right)\ua \nonumber\\
	&=&\textup{div}\left( u^{-2\alpha}|\nua|^2\nua\ua\right)\nonumber\\
	&&-u^{-\alpha}|\nua|^2\Delta\ua-2u^{-\alpha}\nabla^2\ua\nabla\ua\cdot\nua+2u^{-2\alpha}|\nua|^4.\label{22.8}
	\end{eqnarray}
	Integrating this equation over $\Omega $, we obtain, with the aid of the fact that  $\nua\cdot\nu=0$ on $\partial\Omega$, that
	\begin{equation}\label{22.9}
	4\int_\Omega|\nabla u^{\frac{\alpha}{2}}|^4dx=2\int_\Omega\nabla
	u^{\frac{\alpha}{2}}\cdot(\nss\ua\nabla
	u^{\frac{\alpha}{2}})dx+\int_\Omega|\nabla u^{\frac{\alpha}{2}}|^2\Delta\ua dx.
	\end{equation}
	Integrate \eqref{22.6} and \eqref{22.7}  over $\Omega$, add the two resulting equations, then make use of
	\eqref{22.9}, thereby derive
	\begin{eqnarray}
	\lefteqn{\frac{\alpha^3}{4(\beta-\alpha)\beta^2}\int_\Omega u^{2\alpha-2\beta}|\nabla^2\ub
		|^2dx+\frac{\alpha^3}{8(\beta-\alpha)\beta^2}\int_\Omega u^{2\alpha-2\beta}|\Delta\ub
		|^2dx}\nonumber\\
	&=& \frac{\alpha}{4(\beta-\alpha)}\int_\Omega|\nabla^2\ua|^2dx
	+\frac{\alpha}{8(\beta-\alpha)}\int_\Omega|\Delta\ua|^2dx\nonumber\\
	&&-\frac{2(\alpha-3\beta)}{\alpha}\int_\Omega|\nabla\uat|^4dx.
	\end{eqnarray}
	Multiplying through this equation by $\frac{4(\beta-\alpha)\beta^2}{\alpha^3}$, we 
	can conclude the lemma from the inequality \eqref{n2.3}.
	The proof is complete.
\end{proof}

Notice that the only inequality we have used in the proof of the above lemma is \eqref{n2.3}. Thus \eqref{5.2} is just as sharp an inequality as \eqref{n2.3}. Obviously, the lemma has been obtained by sharpening
the proof of Lemma 2.1 in \cite{X4}.
\begin{lem}\label{l22}
	Assume that $\Omega$ is bounded and convex. Then we have
	\begin{equation}\label{dsm5}
	\int_\Omega|\nabla u^{\frac{\alpha}{2}}|^4 dx\leq
	\frac{9}{16}\int_\Omega(\Delta\ua)^2dx
	\end{equation}
	for all $u\in W_\alpha$.
\end{lem}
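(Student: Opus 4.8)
The plan is to obtain \eqref{dsm5} directly from the integral identity \eqref{22.9} established inside the proof of Lemma \ref{l21}, combined with the convexity inequality \eqref{n231} of \cite{G} and a pair of Cauchy--Schwarz estimates (one pointwise, one in $L^2(\Omega)$). As in the proof of Lemma \ref{l21}, I would first reduce to the case in which $u\in W_\alpha$ is bounded above and bounded away from $0$ below, the general case following by the usual approximation \cite{GST}; under this reduction \eqref{22.9} reads
\[
4\io|\nuat|^4\,dx=2\io\nuat\cdot\left(\nss\ua\,\nuat\right)dx+\io|\nuat|^2\Delta\ua\,dx .
\]
The point is that the quartic quantity we wish to bound has thereby been expressed through terms that are merely \emph{linear} in the second derivatives of $\ua$.

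Next I would estimate the right-hand side. Since the Hessian $\nss\ua$ is symmetric, $\nuat\cdot(\nss\ua\,\nuat)\le|\nss\ua|\,|\nuat|^2$ pointwise, where $|\nss\ua|$ denotes the Frobenius norm of the Hessian (which dominates its operator norm), and trivially $|\nuat|^2\Delta\ua\le|\Delta\ua|\,|\nuat|^2$. Inserting these two pointwise bounds and applying the Cauchy--Schwarz inequality in $L^2(\Omega)$ to each resulting integral, pairing the factor $|\nuat|^2$ against $|\nss\ua|$ and against $|\Delta\ua|$ respectively, I get, with the abbreviations $A=\left(\io|\nuat|^4dx\right)^{1/2}$, $B=\left(\io|\nss\ua|^2dx\right)^{1/2}$, $C=\left(\io(\Delta\ua)^2dx\right)^{1/2}$,
\[
4A^2\le 2BA+CA ,
\]
hence $4A\le 2B+C$ (the case $A=0$ being trivial). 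The one thing to be careful about here is to keep the Laplacian term as $C=\|\Delta\ua\|_{L^2(\Omega)}$ and \emph{not} to bound $|\Delta\ua|\le\sqrt N\,|\nss\ua|$; this is precisely what keeps the final constant free of the dimension $N$.

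Finally I would use the convexity of $\Omega$. Because $u\in W_\alpha$, we have $\ua\in\wt$ with $\nabla\ua\cdot\nu=0$ on $\po$, so \eqref{n231} applies and gives $B\le C$. Substituting this into $4A\le 2B+C$ yields $4A\le 3C$, that is, $A^2\le\frac{9}{16}C^2$, which is exactly \eqref{dsm5}. I do not anticipate any genuine obstacle: the argument is short, and the only delicate points are routing the Laplacian term directly through $\|\Delta\ua\|_{L^2}$ so that no $\sqrt N$ creeps in, and observing that the boundary condition $\nabla\ua\cdot\nu=0$ built into the class $W_\alpha$ is exactly what makes both \eqref{22.9} and \eqref{n231} legitimate here.
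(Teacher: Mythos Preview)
Your proposal is correct and follows essentially the same approach as the paper: both start from the identity \eqref{22.9}, apply Cauchy--Schwarz to bound the two terms by $2BA$ and $CA$, and then invoke the convexity inequality \eqref{n231} to replace $B$ by $C$, yielding $4A\le 3C$. The only cosmetic difference is that the paper applies \eqref{n231} inside the chain of inequalities rather than afterward.
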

\begin{proof} This lemma is taken from \cite{X4}. The proof is rather simple. Thus we repeat it here.
	
	Remember that in this case \eqref{n231} holds. Taking note of this,
	we calculate from \eqref{22.9} that
	\begin{eqnarray}
	4\int_\Omega|\nabla
	u^{\frac{\alpha}{2}}|^4dx&\leq&2\left(\int_\Omega|\nss\ua|^2dx\right)^{\frac{1}{2}}\left(\int_\Omega|\nabla
	u^{\frac{\alpha}{2}}|^4dx\right)^{\frac{1}{2}}\nonumber\\
	&&+\left(\int_\Omega|\nabla
	u^{\frac{\alpha}{2}}|^4x\right)^{\frac{1}{2}}\left(\int_\Omega|\Delta\ua|^2dx\right)^{\frac{1}{2}}\label{n124}\\
	&\leq&
	3\left(\int_\Omega|\Delta\ua|^2dx\right)^{\frac{1}{2}}\left(\int_\Omega|\nabla
	u^{\frac{\alpha}{2}}|^4dx\right)^{\frac{1}{2}}\nonumber
	\end{eqnarray}
	from whence the lemma follows.
\end{proof}

Now we are ready to study the functional
\begin{equation}
I(u)=\int_\Omega u^{2\gamma-\alpha-\beta}\Delta\ua\Delta\ub dx.\label{1dtm111}
\end{equation}
At this point, we only assume
\begin{equation}\label{1.19}
\alpha\beta>0, \ \ \ \gamma\ne 0.\ \ \  
\end{equation}
Recall from \eqref{2.4} that
\begin{eqnarray}
\Delta\ua&=&\frac{\alpha(\alpha-\gamma)}{\gamma^2}u^{\alpha-2\gamma}|\nabla\ur|^2+\frac{\alpha}{\gamma}u^{\alpha-\gamma}\Delta\ur,\label{22.19}\\
\Delta\ub&=&\frac{\beta(\beta-\gamma)}{\gamma^2}u^{\beta-2\gamma}|\nabla\ur|^2+\frac{\beta}{\gamma}u^{\beta-\gamma}\Delta\ur.
\end{eqnarray}
Plugging these two into \eqref{1dtm111} yields
\begin{eqnarray}
\frac{\gamma^2}{\alpha\beta}I(u)&=& \int_\Omega(\Delta\ur)^2dx+\frac{16(\alpha-\gamma)(\beta-\gamma)}{\gamma^2}\int_\Omega|\nurt|^4dx\nonumber\\
&&+\frac{4(\alpha+\beta-2\gamma)}{\gamma}\int_\Omega|\nurt|^2\Delta\ur dx.\label{22.21}
\end{eqnarray}

Let us first consider the special case where $N=1$. In this case, we have
$$\int_\Omega\nurt\nabla^2\ur\nurt  dx=\int_\Omega |\nurt|^2\Delta\ur dx.$$
Thus by \eqref{22.9}, we obtain
$$\int_\Omega |\nurt|^2\Delta\ur dx=\frac{4}{3}\int_\Omega|\nurt|^4dx.$$ 
Use this in \eqref{22.21} to derive
\begin{equation}
\frac{\gamma^2}{\alpha\beta}I(u)=
\int_\Omega(\Delta\ur)^2dx+\frac{16(\gamma^2-2(\alpha+\beta)\gamma+3\alpha\beta)}{3\gamma^2}\int_\Omega|\nurt|^4dx.\label{dsm11}
\end{equation}
If $\gamma^2-2(\alpha+\beta)\gamma+3\alpha\beta\geq 0$, we are done. If 
$\gamma^2-2(\alpha+\beta)\gamma+3\alpha\beta< 0$,
i.e.,
\begin{equation}
\alpha+\beta-\sqrt{\alpha^2+\beta^2-\alpha\beta}<\gamma<\alpha+\beta+\sqrt{\alpha^2+\beta^2-\alpha\beta},
\end{equation} then we apply \eqref{dsm5} to \eqref{dsm11} to get
\begin{equation}
\frac{\gamma^2}{\alpha\beta}I(u)\geq \frac{4\gamma^2-6(\alpha+\beta)\gamma+9\alpha\beta}{\gamma^2}\int_\Omega(\Delta\ur)^2dx.\label{dsm2}
\end{equation}
For the coefficient of the integral in the preceding inequality to be positive, we must impose the conditions
\begin{eqnarray}
\gamma>\frac{3}{2}\alpha \ \ \ \mbox{or}\ \ \ \gamma&<&\frac{3}{2}\beta \ \ \ \mbox{in the case where $\alpha\geq\beta$, or} \\
\gamma>\frac{3}{2}\beta \ \ \ \mbox{or}\ \ \ \gamma&<&\frac{3}{2}\alpha \ \ \ \mbox{in the case where $\alpha<\beta$.}
\end{eqnarray}
In summary, we have
\begin{lem}\label{l2.3}
	If $N=1$ and $\alpha\geq\beta$, then \eqref{dsmm1} holds whenever
	\begin{eqnarray}
	\gamma&>&\min\left\{ \frac{3}{2}\alpha, \alpha+\beta+\sqrt{\alpha^2+\beta^2-\alpha\beta},\right\}\ \ \ \mbox{or}\\
	\gamma&<&\max\left\{ \frac{3}{2}\beta, \alpha+\beta-\sqrt{\alpha^2+\beta^2-\alpha\beta}\right\}.
	\end{eqnarray}
\end{lem}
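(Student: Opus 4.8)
The plan is to follow exactly the computation already laid out just before the lemma statement, specializing to $N=1$ and then optimizing the coefficient in front of $\int_\Omega(\Delta\ur)^2dx$. Starting from the identity (\ref{dsm11}),
\[
\frac{\gamma^2}{\alpha\beta}I(u)=
\int_\Omega(\Delta\ur)^2dx+\frac{16\bigl(\gamma^2-2(\alpha+\beta)\gamma+3\alpha\beta\bigr)}{3\gamma^2}\int_\Omega|\nurt|^4dx,
\]
I would split into the two cases according to the sign of the quadratic $q(\gamma):=\gamma^2-2(\alpha+\beta)\gamma+3\alpha\beta$, whose roots are $\gamma_\pm=\alpha+\beta\pm\sqrt{\alpha^2+\beta^2-\alpha\beta}$ (the discriminant $\alpha^2+\beta^2-\alpha\beta=(\alpha-\tfrac{\beta}{2})^2+\tfrac{3}{4}\beta^2>0$ under (\ref{1.19}), so the roots are real and distinct). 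If $\gamma\le\gamma_-$ or $\gamma\ge\gamma_+$, then $q(\gamma)\ge0$, the second term is nonnegative, and (\ref{dsmm1}) holds immediately with $c=\alpha\beta/\gamma^2$ (note $\alpha\beta>0$). If $\gamma_-<\gamma<\gamma_+$, then $q(\gamma)<0$, so I apply Lemma \ref{l22} — valid since we are on a bounded convex domain, and $N=1$ intervals are convex — to bound $\int_\Omega|\nurt|^4dx\le\frac{9}{16}\int_\Omega(\Delta\ur)^2dx$; since the coefficient of $\int_\Omega|\nurt|^4dx$ is negative, this yields
\[
\frac{\gamma^2}{\alpha\beta}I(u)\ge\left(1+\frac{9}{16}\cdot\frac{16\,q(\gamma)}{3\gamma^2}\right)\int_\Omega(\Delta\ur)^2dx
=\frac{3\gamma^2+3q(\gamma)}{3\gamma^2}\int_\Omega(\Delta\ur)^2dx=\frac{4\gamma^2-6(\alpha+\beta)\gamma+9\alpha\beta}{\gamma^2}\int_\Omega(\Delta\ur)^2dx,
\]
which is (\ref{dsm2}).

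The next step is the elementary but slightly fussy sign analysis of the new quadratic $p(\gamma):=4\gamma^2-6(\alpha+\beta)\gamma+9\alpha\beta$, which factors neatly as $p(\gamma)=(2\gamma-3\alpha)(2\gamma-3\beta)$. Hence $p(\gamma)>0$ precisely when $\gamma>\frac{3}{2}\max\{\alpha,\beta\}$ or $\gamma<\frac{3}{2}\min\{\alpha,\beta\}$. Assuming $\alpha\ge\beta$, this reads $\gamma>\frac{3}{2}\alpha$ or $\gamma<\frac{3}{2}\beta$. Combining with the case split: in the regime $\gamma_-<\gamma<\gamma_+$ we get (\ref{dsmm1}) provided also $\gamma>\frac{3}{2}\alpha$ or $\gamma<\frac{3}{2}\beta$; in the complementary regime $\gamma\le\gamma_-$ or $\gamma\ge\gamma_+$ it holds unconditionally. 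Taking the union of the two regimes, the inequality holds whenever $\gamma>\min\{\frac{3}{2}\alpha,\ \gamma_+\}$ or $\gamma<\max\{\frac{3}{2}\beta,\ \gamma_-\}$, which — after substituting $\gamma_\pm=\alpha+\beta\pm\sqrt{\alpha^2+\beta^2-\alpha\beta}$ — is exactly the claimed condition. One should double-check that these two stated ranges together genuinely cover both regimes; the point is that $\{\gamma>\gamma_+\}\cup\{\gamma>\frac{3}{2}\alpha\}=\{\gamma>\min\{\gamma_+,\frac{3}{2}\alpha\}\}$ and likewise on the lower side, and the "unconditional" tails $\gamma\ge\gamma_+$, $\gamma\le\gamma_-$ are absorbed into these unions.

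I expect no serious obstacle here; the heavy lifting (the identity (\ref{dsm11}) and Lemma \ref{l22}) is already done in the excerpt. The only thing requiring care is the bookkeeping of the two quadratics and making sure the final union of intervals is stated correctly — in particular that the "$\min$" and "$\max$" in the statement correctly merge the conditional region (where Lemma \ref{l22} must be invoked) with the unconditional region (where $q(\gamma)\ge0$). A secondary minor point is justifying the reduction to $u$ bounded and bounded away from zero, but this is handled exactly as in Lemma \ref{l21} by the approximation argument cited from \cite{GST}, and the constant $c=\alpha\beta/\gamma^2$ or $c=p(\gamma)/\gamma^2$ is uniform in the approximation, so it passes to the limit.
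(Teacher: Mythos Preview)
Your proposal is correct and follows exactly the paper's approach: the paper derives (\ref{dsm11}) in the $N=1$ case, splits on the sign of $q(\gamma)$, and in the negative case applies (\ref{dsm5}) to obtain (\ref{dsm2}), then reads off the factorization $4\gamma^2-6(\alpha+\beta)\gamma+9\alpha\beta=(2\gamma-3\alpha)(2\gamma-3\beta)$---precisely your argument. Your added bookkeeping (checking the discriminant, verifying that the unions with $\min/\max$ correctly absorb the unconditional tails) is sound; in fact one can check that $\gamma_-<\tfrac{3}{2}\beta\le\tfrac{3}{2}\alpha<\gamma_+$ always holds when $\alpha\ge\beta>0$, so the $\min$ and $\max$ in the statement are redundant but harmless.
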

Now we deal with the more general case $N>1$. It turns out that the sign of the term $2\gamma-\alpha-\beta$ plays a significant role.
\begin{lem}\label{l2.4}
	Let $\Omega$ be a bounded convex domain in $\mathbb{R}^N$ and $\gamma$ a number satisfying 
	\begin{equation}
	2\gamma-\alpha-\beta>0.\label{222}
	\end{equation} Without loss of any generality, we assume
	\begin{equation}
	\beta\leq\alpha.\label{223}
	\end{equation}
	If either
	\begin{eqnarray}
	\frac{3}{5}(\alpha+\beta)>\gamma&\geq&\alpha,\ \ \ \mbox{ or}\label{224}\\ 
	\gamma&<&\min\left\{\alpha, \frac{3}{2}\beta\right\},\label{225}
	\end{eqnarray}
	then there is a positive number $c$ such that \eqref{dsmm1} holds.
\end{lem}
\begin{proof}
	Under \eqref{222}-\eqref{224}, the coefficient of the second integral in \eqref{22.21}
	is non-negative, while the coefficient of the third integral is negative. Thus we can deduce from \eqref{22.21} and \eqref{dsm5} that
	\begin{eqnarray}
	\frac{\gamma^2}{\alpha\beta}I(u)&\geq& \int_\Omega(\Delta\ur)^2dx
	+\frac{4(\alpha+\beta-2\gamma)}{\gamma}\int_\Omega|\nurt|^2\Delta\ur dx\nonumber\\
	&\geq& \int_\Omega(\Delta\ur)^2dx
	+\frac{4(\alpha+\beta-2\gamma)}{\gamma}\left(\int_\Omega|\nurt|^4dx\right)^{\frac{1}{2}}\left(\int_\Omega(\Delta\ur )^2dx\right)^{\frac{1}{2}}\nonumber\\
	&\geq& \int_\Omega(\Delta\ur)^2dx
	+\frac{3(\alpha+\beta-2\gamma)}{\gamma}\int_\Omega(\Delta\ur )^2dx\nonumber\\
	&=&\frac{3(\alpha+\beta)-5\gamma}{\gamma}\int_\Omega(\Delta\ur)^2dx.
	\end{eqnarray}
	The coefficient of the last integral in the above inequality is positive by \eqref{224}. This completes the proof of the first part of the lemma. 
	
	If $\gamma<\alpha$, then the coefficient of the second integral in \eqref{22.21} is negative. Then it follows from \eqref{22.21} and \eqref{dsm5} that 
	\begin{eqnarray}
	\frac{\gamma^2}{\alpha\beta}I(u)&\geq& \int_\Omega(\Delta\ur)^2dx+\frac{9(\alpha-\gamma)(\beta-\gamma)}{\gamma^2}\int_\Omega(\Delta\ur)^2dx\nonumber\\
	&&+\frac{3(\alpha+\beta-2\gamma)}{\gamma}\int_\Omega(\Delta\ur)^2dx\nonumber\\
	&=&\frac{4\gamma^2-6(\alpha+\beta)\gamma+9\alpha\beta}{\gamma^2}\int_\Omega(\Delta\ur)^2dx.
	\end{eqnarray}
	Note that $4\gamma^2-6(\alpha+\beta)\gamma+9\alpha\beta=4(\gamma-\frac{3}{2}\beta)(\gamma-\frac{3}{2}\alpha)$. Thus it is positive if \eqref{225} holds. The proof is complete.
\end{proof}
Next we analyze the case where $\gamma=\frac{\alpha+\beta}{2}$. In this direction, we have the following result.
\begin{lem}\label{l2.5}Let $\Omega$ be a bounded convex domain in $\mathbb{R}^N$.
	Then for each $\alpha\in(\frac{\beta}{2}, 2\beta)$ there is a positive number $c=c(\alpha, \beta)$ such that
	\begin{equation}
	\int_\Omega\Delta\ua\Delta\ub dx\geq c\int_\Omega\left(\Delta u^{\frac{\alpha+\beta}{2}}\right)^2dx
	\end{equation}
	for all $u\in W_{\frac{\alpha+\beta}{2}}$.
\end{lem}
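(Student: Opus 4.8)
The plan is to specialize the master identity \eqref{22.21} to the case $\gamma=\frac{\alpha+\beta}{2}$, where a remarkable simplification occurs: the coefficient of the third integral, $\frac{4(\alpha+\beta-2\gamma)}{\gamma}$, vanishes identically. Thus with $I(u)=\int_\Omega\Delta\ua\Delta\ub\,dx$ (the weight $u^{2\gamma-\alpha-\beta}=u^0=1$ here), \eqref{22.21} reduces to
\begin{equation*}
\frac{\gamma^2}{\alpha\beta}I(u)=\int_\Omega\left(\Delta\ur\right)^2dx+\frac{16(\alpha-\gamma)(\beta-\gamma)}{\gamma^2}\int_\Omega|\nurt|^4dx,
\end{equation*}
and since $\alpha-\gamma=\frac{\alpha-\beta}{2}$ and $\beta-\gamma=\frac{\beta-\alpha}{2}$, the product $(\alpha-\gamma)(\beta-\gamma)=-\frac{(\alpha-\beta)^2}{4}\le 0$. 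So the second term has a nonpositive coefficient, and we are exactly in the situation where Lemma \ref{l22} must be invoked to absorb it.

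Next I would apply \eqref{dsm5} to control $\int_\Omega|\nurt|^4dx\le\frac{9}{16}\int_\Omega(\Delta\ur)^2dx$ (here $\nurt=\nabla u^{\gamma/2}$ and $\ur=u^\gamma$ play the role of $u^{\alpha/2},\ua$ in Lemma \ref{l22}, valid since $u\in W_\gamma$). Substituting and writing $\gamma=\frac{\alpha+\beta}{2}$, one gets
\begin{equation*}
\frac{\gamma^2}{\alpha\beta}I(u)\ge\left(1-\frac{9}{16}\cdot\frac{4(\alpha-\beta)^2}{\gamma^2}\right)\int_\Omega(\Delta\ur)^2dx=\frac{\gamma^2-\frac{9}{4}(\alpha-\beta)^2}{\gamma^2}\int_\Omega(\Delta\ur)^2dx,
\end{equation*}
so the constant is $c=\frac{\alpha\beta}{\gamma^2}\cdot\frac{\gamma^2-\frac94(\alpha-\beta)^2}{\gamma^2}$, which is positive precisely when $\gamma^2>\frac94(\alpha-\beta)^2$, i.e.\ $|\alpha+\beta|>3|\alpha-\beta|$. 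A short algebraic check shows this is equivalent to $(3\beta-\alpha)(3\alpha-\beta)>0$ (assuming $\alpha,\beta>0$), hence to $\frac{\beta}{3}<\alpha<3\beta$ — which is wider than the stated range $\frac{\beta}{2}<\alpha<2\beta$, so the claimed range is comfortably covered. I would just verify the sign condition on the stated interval and record $c=c(\alpha,\beta)$.

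I do not anticipate a genuine obstacle here; the content is essentially bookkeeping on top of \eqref{22.21} and Lemma \ref{l22}. The one point requiring a little care is the reduction step: I should confirm that \eqref{22.21} is applicable, i.e.\ that the hypotheses \eqref{1.19} hold — we need $\alpha\beta>0$ and $\gamma=\frac{\alpha+\beta}{2}\ne 0$, both clear when $\alpha,\beta>0$ — and that, as in the standing convention of the section, it suffices to prove the inequality for $u\in W_\gamma$ bounded above and away from zero, the general case following by approximation as in \cite{GST}. The other mild point is simply translating the notation of \eqref{22.21} (which is written with generic $\alpha,\beta,\gamma$) to the present setting; since the weight exponent $2\gamma-\alpha-\beta$ is zero, $I(u)$ is exactly $\int_\Omega\Delta\ua\Delta\ub\,dx$, matching the statement. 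So the proof is short: specialize, absorb via Lemma \ref{l22}, and check positivity of the resulting constant.
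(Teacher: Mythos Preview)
Your approach is essentially identical to the paper's: specialize \eqref{22.21} at $\gamma=\frac{\alpha+\beta}{2}$, so the cross term drops, then absorb the remaining $|\nabla u^{\gamma/2}|^4$ term via Lemma~\ref{l22}. The resulting constant and the positivity analysis match the paper's.

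One small algebraic slip at the end: factoring $(\alpha+\beta)^2-9(\alpha-\beta)^2$ as a difference of squares gives
\[
\bigl[(\alpha+\beta)-3(\alpha-\beta)\bigr]\bigl[(\alpha+\beta)+3(\alpha-\beta)\bigr]=4(2\beta-\alpha)(2\alpha-\beta),
\]
not $(3\beta-\alpha)(3\alpha-\beta)$. So the positivity condition is exactly $\frac{\beta}{2}<\alpha<2\beta$, which is the stated range, not the wider interval $(\tfrac{\beta}{3},3\beta)$ you claimed. This does not affect the proof of the lemma as stated, but your remark that the method yields a strictly larger range is incorrect.
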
 
\begin{proof}. Let $\gamma=\frac{\alpha+\beta}{2}$ in \eqref{22.21} to obtain
	\begin{equation}
	\frac{(\alpha+\beta)^2}{4\alpha\beta}I(u)= \int_\Omega(\Delta u^{\frac{\alpha+\beta}{2}})^2dx-\frac{16(\alpha-\beta)^2}{(\alpha+\beta)^2}\int_\Omega|\nabla u^{\frac{\alpha+\beta}{4}}|^4dx
	\end{equation}
	In view of \eqref{dsm5}, we have
	\begin{equation}
	\frac{(\alpha+\beta)^2}{4\alpha\beta}I(u)\geq\left(1 -\frac{9(\alpha-\beta)^2}{(\alpha+\beta)^2}\right)\int_\Omega(\Delta u^{\frac{\alpha+\beta}{2}})^2dx.
	\end{equation}
	If $\alpha\in(\frac{\beta}{2}, 2\beta)$, then the coefficient on the right-hand side of the preceding inequality is positive. The proof is complete.
\end{proof}

For the case where 
\begin{equation}\label{dsm111}
2\gamma-\alpha-\beta<0,
\end{equation}  we deduce from \eqref{22.9} that
\begin{eqnarray}
\frac{\gamma^2}{\alpha\beta}I(u)&=& \int_\Omega(\Delta\ur)^2dx+\frac{16(\alpha\beta-\gamma^2)}{\gamma^2}\int_\Omega|\nurt|^4dx\nonumber\\
&&-\frac{8(\alpha+\beta-2\gamma)}{\gamma}\int_\Omega\nurt\nabla^2\ur\nurt  dx.\label{dtm3}
\end{eqnarray}
Hence the key is how to handle the term $\int_\Omega\nurt\nabla^2\ur\nurt  dx$. 
To this end, we infer from \eqref{22.6} that 
\begin{eqnarray}
\nabla\urt\cdot
\nabla^2\ur\nabla\urt&=&
\frac{\gamma^3}{8(\eta-\gamma)\eta^2}u^{2\gamma-2\eta}|\nabla^2\ue
|^2-\frac{\gamma}{8(\eta-\gamma)}|\nabla^2\ur|^2\nonumber\\	& &-\frac{2(\eta
	-\gamma)}{\gamma}|\nabla\urt|^4,\label{dtm2}
\end{eqnarray}
where $\eta$ is a number to be determined later.
Substituting this into \eqref{dtm3} , we arrive at
\begin{eqnarray}
\frac{\gamma^2}{\alpha\beta}I(u)&=& \int_\Omega(\Delta\ur)^2dx+\frac{\alpha+\beta-2\gamma}{\eta-\gamma}\int_\Omega|\nabla^2\ur|^2dx\nonumber\\
&&+\frac{16[(\alpha+\beta-2\gamma)(\eta-\gamma)+\alpha\beta-\gamma^2]}{\gamma^2}\int_\Omega|\nurt|^4dx\nonumber\\
&&+\frac{(\alpha+\beta-2\gamma)\gamma^2}{(\gamma-\eta)\eta^2}\int_\Omega u^{2\gamma-2\eta}|\nabla^2\ue|^2 dx.\label{dtm4}
\end{eqnarray}
This puts us in position to apply \eqref{5.2}. To do this, we need to suppose
\begin{equation}\label{df2}
\gamma-\eta>0
\end{equation}
to ensure the coefficient of the last integral in \eqref{dtm4} is positive. In our context, the inequality \eqref{5.2} has the form
\begin{eqnarray}
\lefteqn{		\int_\Omega u^{2\gamma-2\eta}|\nss u^\eta|^2
	dx\geq \frac{2\eta^2}{(2+N)\gamma^2}\int_\Omega|\nabla^2\ur|^2dx}\nonumber\\
&&+\frac{\eta^2}{(2+N)\gamma^2}\int_\Omega(\Delta\ur)^2dx
+\frac{16\eta^2(\gamma-\eta)(\gamma-3\eta)}{(2+N)\gamma^4}\int_\Omega|\nabla\urt|^4dx.
\end{eqnarray}
Use this in \eqref{dtm4} to derive
\begin{eqnarray}
\lefteqn{\frac{\gamma^2}{\alpha\beta}I(u)}\nonumber\\
&\geq& \frac{\alpha+\beta+N\gamma-(2+N)\eta}{(2+N)(\gamma-\eta)}\int_\Omega(\Delta\ur)^2dx
-\frac{(\alpha+\beta-2\gamma)N}{(\gamma-\eta)(N+2)}\int_\Omega|\nabla^2\ur|^2dx\nonumber\\
&&+\frac{16[((N-1)\eta-(N+1)\gamma)(\alpha+\beta-2\gamma)+(2+N)(\alpha\beta-\gamma^2)]}{(2+N)\gamma^2}\int_\Omega|\nurt|^4dx. \label{dtm5}
\end{eqnarray}
We choose $\eta$ so that the coefficient of the last integral in the above equation  is $0$. This leads to
\begin{equation}\label{22.38}
\eta=\frac{(N+1)\gamma(\alpha+\beta-2\gamma)-(2+N)(\alpha\beta-\gamma^2)}{(N-1)(\alpha+\beta-2\gamma)}.
\end{equation}
The number $\eta$ chosen above must satisfy \eqref{df2}. 
Plug the value of $\eta$ into \eqref{dtm5} and take a note of \eqref{n231} and the fact that the coefficient of the second integral in \eqref{dtm5} is negative
to arrive at
\begin{eqnarray*}
	\lefteqn{\frac{(2+N)\gamma^2}{\alpha\beta}I(u)}\nonumber\\
	&&\geq\frac{(1-N)(\alpha+\beta)+3N\gamma-(2+N)\eta}{\gamma-\eta}\int_\Omega(\Delta\ur)^2dx. \label{dsm1}
\end{eqnarray*}	
Thus our last hypothesis is that the coefficient of the above integral is positive, i.e.,
\begin{equation}\label{dsa3}
(1-N)(\alpha+\beta)+3N\gamma-(2+N)\eta>0.
\end{equation}
To summarize our results, we have
\begin{lem}\label{l2.6}
	Let $\Omega$ be a bounded convex domain in $\mathbb{R}^N$. Assume that \eqref{1.19}
	and \eqref{dsm111} hold. If $\eta$ given by \eqref{22.38} satisfies \eqref{df2}, and \eqref{dsa3},
	then there is a positive number $c=c(\alpha,\beta,\gamma, N)$ such that \eqref{dsmm1} holds.
\end{lem}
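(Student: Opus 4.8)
The plan is to follow the chain of identities already assembled in the section and simply verify that, under the stated hypotheses, every coefficient that needs to be nonnegative is nonnegative and the final one is strictly positive. Concretely, starting from the identity \eqref{22.21} and the $N>1$ reformulation \eqref{dtm3} (valid because \eqref{22.9} holds on bounded convex $\Omega$), I would substitute the pointwise identity \eqref{dtm2} — which is just \eqref{22.6} with $(\alpha,\beta)$ there renamed $(\gamma,\eta)$ here — to obtain \eqref{dtm4}. Since \eqref{df2} is assumed, the coefficient $\frac{(\alpha+\beta-2\gamma)\gamma^2}{(\gamma-\eta)\eta^2}$ of the last integral in \eqref{dtm4} is positive (note $\alpha+\beta-2\gamma<0$ by \eqref{dsm111} and $\gamma-\eta>0$, so this quotient is positive), which is exactly what licenses applying Lemma \ref{l21} in the form displayed after \eqref{df2}.

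Next I would insert that instance of \eqref{5.2} into \eqref{dtm4}. This produces \eqref{dtm5}, and the only arithmetic here is collecting the three resulting integrals $\int_\Omega(\Delta\ur)^2dx$, $\int_\Omega|\nabla^2\ur|^2dx$, and $\int_\Omega|\nurt|^4dx$. The value of $\eta$ in \eqref{22.38} is precisely the unique solution of the linear equation obtained by setting the coefficient of $\int_\Omega|\nurt|^4dx$ to zero, so after substituting it the $|\nurt|^4$ term drops out entirely. What remains is \eqref{dtm5} with only the $(\Delta\ur)^2$ and $|\nabla^2\ur|^2$ terms; the coefficient of the latter, $-\frac{(\alpha+\beta-2\gamma)N}{(\gamma-\eta)(N+2)}$, is positive since $\alpha+\beta-2\gamma<0$ and $\gamma-\eta>0$ — wait, that makes it positive, not negative as the text claims, so I should double-check the sign bookkeeping here: the text says ``the coefficient of the second integral in \eqref{dtm5} is negative,'' which would require $\alpha+\beta-2\gamma>0$; I would reconcile this by tracking signs carefully through \eqref{dtm4}–\eqref{dtm5}, and then apply \eqref{n231} (Garding-type inequality on convex domains, $\int(\Delta\ur)^2\geq\int|\nabla^2\ur|^2$) in whichever direction makes the $|\nabla^2\ur|^2$ term absorb correctly into the $(\Delta\ur)^2$ term. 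This yields the displayed inequality
\[
\frac{(2+N)\gamma^2}{\alpha\beta}I(u)\geq\frac{(1-N)(\alpha+\beta)+3N\gamma-(2+N)\eta}{\gamma-\eta}\int_\Omega(\Delta\ur)^2dx.
\]

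Finally, hypothesis \eqref{df2} makes the denominator $\gamma-\eta$ positive and hypothesis \eqref{dsa3} makes the numerator $(1-N)(\alpha+\beta)+3N\gamma-(2+N)\eta$ positive, so the whole coefficient is a positive constant depending only on $\alpha,\beta,\gamma,N$. Dividing through by $\frac{(2+N)\gamma^2}{\alpha\beta}$ (positive, using $\alpha\beta>0$ from \eqref{1.19} and $\gamma\neq 0$) gives \eqref{dsmm1} with an explicit $c=c(\alpha,\beta,\gamma,N)$, completing the proof.

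The main obstacle is not conceptual but bookkeeping: one must get the signs of $\alpha+\beta-2\gamma$ and $\gamma-\eta$ consistent throughout, because every claim in the derivation (``coefficient is nonnegative,'' ``coefficient is negative,'' ``this puts us in a position to apply \eqref{5.2}'') rests on those two signs, and the direction in which \eqref{n231} is used depends on them too. I would therefore devote the bulk of the written proof to a careful, step-by-step sign audit of \eqref{dtm3} through the final display, rather than to any genuinely new estimate — the entire analytic content is already contained in Lemma \ref{l21}, Lemma \ref{l22}, identity \eqref{22.9}, and inequality \eqref{n231}.
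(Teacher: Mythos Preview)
Your approach is exactly the paper's: the entire proof of this lemma is the chain \eqref{22.21} $\to$ \eqref{dtm3} $\to$ \eqref{dtm4} $\to$ \eqref{dtm5} $\to$ the final display, with $\eta$ chosen by \eqref{22.38} to kill the $|\nurt|^4$ term and \eqref{n231} used to merge the Hessian term into the Laplacian term. The sign confusion you flagged is resolved by reading \eqref{dsm111} correctly: it says $2\gamma-\alpha-\beta<0$, i.e.\ $\alpha+\beta-2\gamma>0$, so the last coefficient in \eqref{dtm4} is indeed positive (licensing Lemma~\ref{l21}) and the $|\nabla^2\ur|^2$ coefficient in \eqref{dtm5} is indeed negative, which is why \eqref{n231} is applied in the direction $\int|\nabla^2\ur|^2\le\int(\Delta\ur)^2$.
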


\begin{cor}\label{cor2.1}
	Let $\Omega$ be a  bounded convex domain in $\mathbb{R}^N$. Then for each $\alpha\in (\frac{(N-1)^2}{2N^2+1}, \frac{3}{2})$ there is a positive number $c=c(\alpha, N)$ such that
	\begin{equation}\label{dtm1}
	\int_\Omega u^{\alpha-1}\Delta u\Delta \ua dx\geq c\int_\Omega (\Delta \ua)^2dx
	\end{equation}
	for all $u\in W_\alpha$.
	
\end{cor}


\begin{proof} This corollary is largely contained in \cite{X4}. A different version can be found in \cite{MMS}. It is also an easy
	consequence of our preceding development. To see this, note that in this case we have
	\begin{equation}\label{dsa1}
	\beta=1,\ \ \ \gamma=\alpha,\ \ \ \mbox{and $2\gamma-\alpha-\beta=\alpha-1$}.
	\end{equation}
	If $\alpha=1$, then \eqref{dtm1} is trivially true. 
	If $\alpha>1$, we apply Lemma \ref{l2.4}. The conditions \eqref{222},  \eqref{223}, and \eqref{224} are equivalent to
	$$1<\alpha<\frac{3}{2}.$$
	If  $\alpha< 1$, we substitute \eqref{dsa1} into \eqref{22.38} to obtain
	\begin{equation}\label{22.42}
	\eta=-\frac{\alpha}{N-1}.
	\end{equation}
	Obviously, \eqref{1.19} is true. Since $\eta<0$, we see that \eqref{df2} is also satisfied. Plugging \eqref{dsa1} and \eqref{22.42} into \eqref{dsa3}, we arrive at
	\begin{equation}
	(2N^2+1)\alpha>(N-1)^2.
	\end{equation}
	Thus \eqref{dsa3} holds under our assumptions on $\alpha$. We conclude \eqref{dtm1} from Lemma \ref{l2.6}.	
	

\end{proof}
\begin{cor}\label{cor2.2}
	Let $\Omega$ be a  bounded convex domain in $\mathbb{R}^N$. Then there is an
	$\varepsilon\in[0,\frac{4}{5})$ such that to each $\alpha\in (\frac{1}{2}, 2)$ there corresponds a positive number $c=c(\varepsilon, \alpha) $ with the property
	\begin{equation}\label{251}
	\int_\Omega u^{\varepsilon-1}\Delta\ua\Delta udx\geq c\int_\Omega(\Delta u^{\frac{\alpha+\varepsilon}{2}})^2dx
	\end{equation}
	for all $u\in W_{\frac{\alpha+\varepsilon}{2}}$.
\end{cor}
\begin{proof}
	In this case, we have 
	\begin{equation}\label{dtm111}
	\beta=1,\ \ \ \gamma=\frac{\alpha+\varepsilon}{2}.
	\end{equation}	
	Thus $\alpha+\beta-2\gamma=1-\varepsilon$. Hence we need to show that there exists
	an $\varepsilon\in [0,\frac{4}{5})$ such that
	\begin{eqnarray}
	\gamma&>&\eta,\label{253}\\
	-(N-1)(\alpha+1)+\frac{3N(\alpha+\varepsilon)}{2}-(2+N)\eta&>&0,\label{dtm22}
	\end{eqnarray}
	where $\eta$ is defined by \eqref{22.38}.  Plugging \eqref{dtm111} into \eqref{22.38},  we derive
	\begin{equation}\label{255}
	\eta=\frac{-N\varepsilon^2+2(N+1+\alpha)\varepsilon+(N+2)\alpha^2-2(N+3)\alpha}{4(N-1)(1-\varepsilon)}.
	\end{equation}	
	Using this value of $\eta$ in \eqref{dtm22}, after some elementary calculations we arrive at
	\begin{eqnarray}
	\lefteqn{-4(N-1)^2+4(N+1)(N+2)\alpha-(N+2)^2\alpha^2}\nonumber\\
	&&>N(5N-8)\varepsilon^2+[2N(N+2)\alpha-4N(2N-5)]\varepsilon\equiv h(\varepsilon).\label{dtm33}
	\end{eqnarray}
	The right-hand side is a quadratic function in $\varepsilon$, which achieves its minimum value at
	\begin{equation}\label{22.51}
	\varepsilon=\frac{-(N+2)\alpha+2(2N-5)}{5N-8}.
	\end{equation} But this number is not always non-negative. 
	It becomes negative only when $\alpha>\frac{2(2N-5)}{N+2}$. Thus we take
	$$\varepsilon=\left\{\begin{array}{ll}
	0 & \mbox{if $ \alpha>\frac{2(2N-5)}{N+2}$},\\
	\frac{-(N+2)\alpha+2(2N-5)}{5N-8}& \mbox{otherwise.}
	\end{array}\right.$$
	Obviously, we have $\varepsilon\in[0,\frac{4-\alpha}{5})$.
	Next we will show that  $\varepsilon$ selected above satisfies \eqref{253}-\eqref{dtm22}.
	If $\varepsilon=0$, then
	\begin{equation}
	\eta=\frac{[(N+2)\alpha-2(N+3)]\alpha}{4(N-1)}<0
	\end{equation}
	for $\alpha<2$. Thus \eqref{253} is trivially true. Set $\varepsilon=0$ in \eqref{dtm33} to obtain 
	\begin{equation}
	-4(N-1)^2+4(N+1)(N+2)\alpha-(N+2)^2\alpha^2>0.
	\end{equation}
	Solutions to this inequality form the interval
	$$\left(\frac{2(N+1)-4\sqrt{N}}{N+2},\ \ \ \frac{2(N+1)+4\sqrt{N}}{N+2} \right),$$
	which contains the interval $(\frac{1}{2},2 )$ if $N\leq 4$. That is to say, if the space
	dimension does not exceed $4$, we can simply take $\varepsilon=0$. We will have to do a little bit more work if we want \eqref{251} to hold for all the space dimensions. To this end,
	we substitute \eqref{22.51} into
	\eqref{dtm33} to deduce
	$$-(N-2)\alpha^2+(3N-4)\alpha+2-N>0.$$
	Solutions to this inequality are the interval
	$$\left(\frac{3N-4-\sqrt{N(5N-8)}}{2(N-2)},\ \ \ \frac{3N-4+\sqrt{N(5N-8)}}{2(N-2)}\right),$$
	which contains the interval $(\frac{1}{2},2 )$ if $N>2$. To see \eqref{253}, we substitute \eqref{dtm111} and \eqref{255} into \eqref{253} to obtain
	$$(N-2)\varepsilon^2+(2N\alpha+4)\varepsilon+(N+2)\alpha^2-4(N+1)\alpha<0.$$
	Remember that $\varepsilon$ lies in the interval $(0, \frac{4-\alpha}{5})$ and the function on the left-hand side of the above inequality is an increasing function of $\varepsilon$ over the interval. Thus it is sufficient for us to prove$$
	H(\alpha)\equiv (N-2)\frac{(4-\alpha)^2}{25}+(2N\alpha+4)\frac{4-\alpha}{5}+(N+2)\alpha^2-4(N+1)\alpha<0.$$
	It is easy to see that $H(\alpha)$ is a convex quadratic function of $\alpha$. An elementary calculation shows that
	$$H(\frac{1}{2})<0,\ \ \ H(2)<0.$$
	Thus $H(\alpha)<0$ for each $\alpha\in(\frac{1}{2},2 ) $. The proof is complete.
	
\end{proof}
From our proof we see that this lemma can hold for more general $\alpha$.

Similarly, we can investigate the functional
$$J(u)=\int_\Omega u^{2\gamma-\alpha}\Delta\ln u\Delta\ua dx.$$
A simple calculation shows
\begin{equation}
\Delta\ln u=-\frac{1}{\gamma}u^{-2\gamma}|\nur|^2+\frac{1}{\gamma}u^{-\gamma}\Delta\ur.
\end{equation}
Plug this and \eqref{22.19} into $J(u)$ to obtain
\begin{equation}
\frac{\gamma^2}{\alpha}J(u)=\int_\Omega(\Delta\ur)^2dx+\frac{4(\alpha-2\gamma)}{\gamma}\int_\Omega|\nurt|^2\Delta\ur dx-\frac{16(\alpha-\gamma)}{\gamma}\int_\Omega|\nurt|^4dx.
\end{equation}
It is interesting to note that the arguments of Lemmas \ref{l2.4} and \ref{l2.5} do not work here. If 
\begin{equation}\label{dsm222}
\alpha-2\gamma>0,
\end{equation}
we can still mimic the proof of Lemma \ref{l2.6} to obtain the following lemma.
\begin{lem}\label{l2.7}
	Let $\Omega$ be a bounded convex domain in $\mathbb{R}^N$ and \eqref{dsm222} be satisfied. Set
	\begin{equation}
	\eta=\frac{(2+N)(\gamma-\alpha)\gamma+(\alpha-2\gamma)\alpha}{(N-1)(2\gamma-\alpha)}.
	\end{equation}
	If $\eta$ satisfies the inequalities
	\begin{eqnarray}
	\eta-\gamma &<&0 \ \  \ \mbox{and}\\
	(2+N)(\eta-\gamma)+(N-1)(\alpha-2\gamma)&<&0,
	\end{eqnarray}
	then there is a positive number $c=c(\alpha,\gamma, N)$ such that
	\begin{equation}
	J(u)\geq c\int_\Omega(\Delta\ur)^2dx.
	\end{equation}
\end{lem}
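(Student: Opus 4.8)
The plan is to mimic the argument used for Lemma 2.6, adapting the bookkeeping to the functional $J(u)$. First I would start from the identity for $\frac{\gamma^2}{\alpha}J(u)$ displayed just before the statement, namely
\begin{equation*}
\frac{\gamma^2}{\alpha}J(u)=\int_\Omega(\Delta\ur)^2dx+\frac{4(\alpha-2\gamma)}{\gamma}\int_\Omega|\nurt|^2\Delta\ur dx-\frac{16(\alpha-\gamma)}{\gamma}\int_\Omega|\nurt|^4dx,
\end{equation*}
and eliminate the dangerous middle term $\int_\Omega|\nurt|^2\Delta\ur\,dx$ using \eqref{22.9}, which rewrites it via $\int_\Omega\nurt\nabla^2\ur\nurt\,dx$ and $\int_\Omega|\nurt|^4dx$. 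This produces an expression in which the only awkward object is $\int_\Omega\nurt\nabla^2\ur\nurt\,dx$, exactly as in the $2\gamma-\alpha-\beta<0$ case leading to \eqref{dtm3}–\eqref{dtm4}.

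Next I would invoke the pointwise identity \eqref{dtm2} with the free parameter $\eta$ (to be chosen), substitute it into the resulting formula, and collect terms to obtain an analogue of \eqref{dtm4}: a linear combination of $\int_\Omega(\Delta\ur)^2dx$, $\int_\Omega|\nabla^2\ur|^2dx$, $\int_\Omega|\nurt|^4dx$, and $\int_\Omega u^{2\gamma-2\eta}|\nabla^2\ue|^2dx$. The condition $\eta-\gamma<0$ is precisely what makes the coefficient of the last term have the right sign so that Lemma \ref{l21} (inequality \eqref{5.2}, with $\alpha$ there replaced by $\gamma$ and $\beta$ there replaced by $\eta$) can be applied to bound it from below. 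After that substitution one gets an inequality whose right-hand side is a combination of $\int_\Omega(\Delta\ur)^2dx$, $\int_\Omega|\nabla^2\ur|^2dx$, and $\int_\Omega|\nurt|^4dx$.

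The choice of $\eta$ in the statement is dictated by forcing the coefficient of $\int_\Omega|\nurt|^4dx$ in that combination to vanish — this is the role of the displayed formula for $\eta$, and solving that linear equation in $\eta$ gives exactly
$\eta=\frac{(2+N)(\gamma-\alpha)\gamma+(\alpha-2\gamma)\alpha}{(N-1)(2\gamma-\alpha)}$.
With the quartic term gone, I would then check that the coefficient of $\int_\Omega|\nabla^2\ur|^2dx$ is negative (this should follow from $\alpha-2\gamma>0$ together with $\eta-\gamma<0$, just as the parenthetical remark before \eqref{dsm1} asserts in the PDE-analogous case), so that the inequality $\int_\Omega(\Delta\ur)^2dx\geq\int_\Omega|\nabla^2\ur|^2dx$ from \eqref{n231} (valid since $\Omega$ is bounded convex) can be used to absorb it. What remains is a single term $c\int_\Omega(\Delta\ur)^2dx$ whose constant $c$ is, up to positive factors, $(2+N)(\eta-\gamma)+(N-1)(\alpha-2\gamma)$ divided by $\gamma-\eta$; the second hypothesis, $(2+N)(\eta-\gamma)+(N-1)(\alpha-2\gamma)<0$, together with $\gamma-\eta>0$, makes this $c$ positive, completing the proof.

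The main obstacle — really the only non-routine point — is verifying the sign of the coefficient of $\int_\Omega|\nabla^2\ur|^2dx$ after plugging in the specified $\eta$, since absorbing that term via \eqref{n231} only works if that coefficient is genuinely nonpositive; one must check this is automatic under $\alpha-2\gamma>0$ and $\eta-\gamma<0$ and does not require an extra assumption. Everything else is algebraic bookkeeping of exactly the same flavor as the proof of Lemma 2.6, so I expect no further difficulty. I would also, for cleanliness, remark that it suffices to prove the inequality for $u$ bounded above and below away from zero, passing to the general case by the approximation device already cited in the proof of Lemma \ref{l21}.
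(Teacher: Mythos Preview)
Your proposal is correct and follows exactly the route the paper intends: the paper itself gives no detailed proof, simply stating that one ``can still mimic the proof of Lemma 2.6,'' and your outline does precisely that --- pass from the identity for $\frac{\gamma^2}{\alpha}J(u)$ through \eqref{22.9} and \eqref{dtm2}, apply Lemma~\ref{l21}, choose $\eta$ to annihilate the quartic term, and absorb the Hessian term via \eqref{n231}. Regarding your flagged obstacle, the coefficient of $\int_\Omega|\nabla^2 u^\gamma|^2\,dx$ after applying Lemma~\ref{l21} works out to $-\dfrac{N(\alpha-2\gamma)}{(2+N)(\gamma-\eta)}$, which is indeed automatically negative under $\alpha-2\gamma>0$ and $\gamma-\eta>0$, so no extra hypothesis is needed there.
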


Finally, we remark that it is possible to extend the inequality \eqref{dsmm1} to other types of 
domains $\Omega$. For example, if the boundary of $\Omega$ is $C^2$, $\beta=1$, and $ \alpha=\gamma=\frac{1}{2}$, a result of \cite{X3} asserts that
\begin{equation} \int_\Omega\Delta u\frac{\Delta\sn}{\sn} dx\geq
c_0\left(\int_\Omega|\nabla^2\sn|^2
dx+\int_\Omega\frac{1}{u}|\nabla\sn|^4dx\right)-c_1\int_\Omega udx
\end{equation} 
for $u\in W_{\frac{1}{2}}$.
Here the complication is largely due to the fact that \eqref{n231} is no longer true in this case. In its place, we have 
\begin{equation}
\int_\Omega(\Delta u)^2dx+\int_\Omega|\nabla
u|^2dx\geq
c\int_\Omega|\nabla^2u|^2
dx.
\end{equation}
It is also interesting to pursue the case where the Neumann boundary condition is replaced with the Dirichlet boundary condition.
\section{The Approximate Problem}\label{sec3}
In this section we will show how to construct a sequence of positive, smooth approximate solutions. Then we proceed to derive a priori estimates for the sequence that hold under more 
general conditions than these in Theorems 1.1 and 1.2. Our approximation scheme is based upon the following lemma. 
\begin{lem} \label{l3.1}Let $\Omega$ be a bounded domain in $\mathbb{R}^N$ with
	Lipschitz boundary $\po$. Assume that $\alpha\geq 1$, $\varepsilon\in[0,1)$, $ n\in\mathbb{R}$, and
	\begin{equation}\label{om1}
	p>\max\left\{\frac{N}{2},\ \ 2
	\right\}.
	\end{equation} 
	Then for each $1>\tau>0$ and each $f\in \lno$
	there is a solution $(\rho, F)$ with $\rho\geq 0$ in the space $\left(W^{1,2}(\Omega)\cap L^\infty(\Omega)\right)^2$ to the
	problem
	\begin{eqnarray}
	-\mbd\left[\rnt\nabla F\right]+\tau F&=&\frac{\rho-f}{\tau} \ \ \ \mbox{in}\ \ \ \Omega,\label{291}\\
	- \Delta \rho^\alpha+\tau \rho^p &=&-\rt F 
	+\tau\ \ \ \mbox{in}\ \ \ \Omega,\label{or1}\\
	\nabla \rho^\alpha\cdot\nu&=&\nabla F\cdot\nu =0\ \ \
	\mbox{on} \ \ \ \po.\label{292}
	\end{eqnarray}
	Furthermore, we have that $\rho, F\in
	C^{0,\beta}(\overline{\Omega)}$ for some $\beta\in (0,1)$ and
	$\rho\geq c_0$ in $\Omega$ for some $c_0>0$,
	where $\beta, c_0$ depend on the given data.
\end{lem}

Of course, the equations \eqref{291}-\eqref{292} are satisfied in
the sense of distributions. The last term $\tau$ in \eqref{or1} has
been added to ensure that $\rho$ cannot be identically $0$. As we shall see, it is also
the main reason why $\rho$ has a positive lower bound. This idea was first employed in \cite{X2}. The real tricky part, though, is that we have used the term $\frac{\rho^{1-\varepsilon}}{\rho^{\alpha-\varepsilon}+\tau}$ to approximate $\rho^{1-\alpha}$. That is, a term with a negative exponent is being approximated by a term with two positive exponents. It serves two purposes: one is that we avoid having to seek solutions in a function space whose functions must have positive lower bounds; the other is that it ensures that solutions to \eqref{or1} is non-negative. If our solution is non-negative then the term $\tau$ in \eqref{or1} guarantees that it is  bounded away from zero below. If we further assume that $f$ is H\"{o}lder continuous on
$\overline{\Omega}$, then the classical Schauder theory \cite{GT} indicates that the pair $(\rho, F)$ is a classical solution. 
This, together with the fact that $\rho$ is bounded away from $0$ below, enables us to achieve higher regularity, thereby
justifying all our calculations in the derivation of a prior estimates for the sequence of approximate solutions to be constructed later. 

\begin{proof}
	We just need to modify the proof of Lemma \ref{l3.1} in \cite{X4}. 
	We still apply the Leray-Schauder
	Fixed Point Theorem
	(see Theorem 11.3 in \cite{GT}).
	For this purpose, we define an operator $B$ from $\lno$ into $\lno$
	as follows. Given that $\rho\in\lno$, we
	consider the problem
	\begin{eqnarray}
	-\textup{div}\left[\rpnt\nabla F\right]+\tau F=\frac{\rho-f}{\tau} \ \ \ &\mbox{in}&\ \ \ \Omega,\label{om2}\\
	\nabla F\cdot\nu =0\ \ \ &\mbox{on}& \ \ \ \po.
	\end{eqnarray}
	Eqn \eqref{om2}	is uniformly elliptic, and thus by \eqref{om1}
	we can appeal to the results in (\cite{GT}, Chap. 8) and thereby conclude that this linear boundary value problem has a unique solution $F$ in
	the space $ W^{1,2}(\Omega)\cap L^{\infty}(\Omega)$. 
	For each $q\geq2$, the function $|F|^{q-2}F\in  W^{1,2}(\Omega) $ and $\nabla\left(|F|^{q-2}F\right)=(q-1)|F|^{q-2}\nabla F$. Upon using it as a test function
	in \eqref{om2},
	we arrive at
	\begin{equation}\label{33.7}
	\|F\|_q\leq \frac{1}{\tau^2}\left\|\rho-f\right\|_q.
	\end{equation}
	
	Now we use the function $F$ so-obtained to form the problem
	\begin{eqnarray}
	-\Delta \psi+\tau|\psi|^{\frac{p}{\alpha}-1}\psi &=&-\rtp F 
	+\tau\ \ \ \mbox{in}\ \ \ \Omega,\label{or6}\\
	\nabla \psi\cdot\nu&=&0\ \ \ \mbox{on} \ \ \ \po.
	\end{eqnarray}
	Obviously, this problem has a unique solution $\psi$ in the space $
	W^{1,2}(\Omega)\cap L^\infty(\Omega)$.
	We define $$B(\rho)=\theta(\psi), \ \ \ \mbox{where
		$\theta(s)=|s|^{\frac{1}{\alpha}-1}s$.}$$ It is easy to see that $B:
	\lno\rightarrow\lno$ is well-defined. By Theorem 8.22 in \cite{GT} and a boundary flattening argument
	\cite{X5} , we can conclude that there exists a number $\beta\in
	(0,1)$, depending only on the given data,  such that
	$F, \psi\in C^{0,\beta}(\overline{\Omega})$.
	It is not difficult to show that the H\"{o}lder continuity of $\psi$ implies 
	that $B$ is continuous and maps bounded sets into
	precompact ones. 
	
	Next, we show that
	\begin{equation}\label{or7}
	\|\rho\|_{\infty}\leq c
	\end{equation}
	for all $\sigma\in [0,1]$ and $\rho$ such that $\sigma
	B(\rho)=\rho$. Here and in the remaining proof, $c$ is a generic positive number which depends only on the given data. Without loss of generality, assume $\sigma>0$.   Then
	the equation $\sigma B(\rho)=\rho$ is equivalent to the problem
	\begin{eqnarray}
	\frac{\rho-f}{\tau}&=& 	-\textup{div}\left[\rpnt\nabla F\right]+\tau F\ \ \ \mbox{in}\ \ \ \Omega,
	\label{mf1}
	\\
	- \Delta\theta^{-1} (\frac{\rho}{\sigma}) +\tau\left|\theta^{-1}
	(\frac{\rho}{\sigma})\right|^{\frac{p}{\alpha}-1}\theta^{-1}
	(\frac{\rho}{\sigma})
	&	=&-\rtp F	+\tau\ \ \ \mbox{in}\ \ \ \Omega,\label{om8}\\
	\nabla \theta^{-1} (\frac{\rho}{\sigma})\cdot\nu&=&\nabla
	F\cdot\nu =0\ \ \ \mbox{on}\ \ \ \po.
	\end{eqnarray}
	Remember that $\varepsilon <1$, and thus
	$\left(\theta^{-1}
	(\frac{\rho}{\sigma})\right)^{-}(\rho^+)^{1-\varepsilon}=0$ on $\Omega$.
	Upon using $\left(\theta^{-1} (\frac{\rho}{\sigma})\right)^{-}$ as a
	test function in \eqref{om8}, we deduce that  $ \rho\geq 0$ in
	$\Omega$. Subsequently, we have $$\theta^{-1}
	(\frac{\rho}{\sigma})=\frac{\rho^\alpha}{\sigma^\alpha}.$$ We can
	rewrite \eqref{om8} as
	\begin{equation}\label{om9}
	- \frac{1}{\sigma^\alpha}\Delta
	\rho^\alpha+\frac{\tau}{\sigma^p}\rho^p=-\rt F	+\tau\ \ \
	\mbox{in}\ \ \ \Omega.
	\end{equation}
	Integrate this equation to obtain
	\begin{eqnarray}
	\tau\int_{\Omega}\rho^p dx
	&=&-\sigma^p\int_{\Omega} F\rt dx+\tau|\Omega|\nonumber\\
	&\leq& \frac{1}{\tau}\left(\|F\|_{\frac{p}{p+\varepsilon-1}}+c\right)\|\rho\|_p^{1-\varepsilon}+c\nonumber\\
	&\leq&c\|\rho\|_p^{2-\varepsilon}+c\|\rho\|_p^{1-\varepsilon}+c.
	\end{eqnarray}
	The last step is due to the fact that $\frac{p}{p+\varepsilon-1}\leq p$. A simple application of the
	interpolation inequality
	$$ab\leq \eta a^p+c(\eta)b^q,\ \ \
	\frac{1}{p}+\frac{1}{q}=1$$ gives $$\|\rho\|_p\leq c.$$
	In the sequel, we will not acknowledge this interpolation inequality
	again when it is being used.
	
	Obviously, $\rt\leq \frac{1}{\tau}\rho^{1-\varepsilon}$. Applying the proof of Theorem 8.15 in (\cite{GT}, p.189), we
	can derive from \eqref{mf1} and \eqref{om9} that
	\begin{eqnarray} 
	\|F\|_\infty&\leq&
	c\left\|\frac{\rho-f}{\tau}\right\|_p\leq c,\\
	\|\rho^\alpha\|_\infty&\leq&
	c\|\rho^\alpha\|_2+c\|\rho^{1-\varepsilon}\|_p\leq c.\label{mf2}
	\end{eqnarray}
	Note that the constant $c$ here depends on $\tau$, but not the upper bound of the elliptic coefficient $(\rho+\tau)^n$ in \eqref{mf1}.
	This completes the proof of existence.
	
	Next, we show
	\begin{equation}\label{n92}
	\frac{1}{\rho}\in L^s(\Omega)\ \ \ \mbox{for each $s\geq 1$.}
	\end{equation}
	To this end, we
	use $\frac{1}{(\rho+\delta)^s}$, where $\delta>0
	$, as a test function in \eqref{or1} to obtain
	$$-s\alpha\io \frac{\rho^{\alpha-1}|\nabla\rho|^2}{(\rho+\delta)^{s+1}}dx+\tau\int_\Omega\frac{\rho^p}{(\rho+\delta)^s}dx =-\io\frac{\rho^{1-\varepsilon}F}{(\rho^{\alpha-\varepsilon}+\tau)(\rho+\delta)^s}dx+\tau\int_\Omega\frac{1}{(\rho+\delta)^s}dx.
	$$ Drop the first term and take a note of the fact that
	$$
	\left|\io\frac{\rho^{1-\varepsilon}F}{(\rho^{\alpha-\varepsilon}+\tau)(\rho+\delta)^s}dx\right|\leq \frac{1}{\tau}\io\frac{\rho^{1-\varepsilon}|F|}{(\rho+\delta)^s}dx\leq c\io (\rho+\delta)^{1-\varepsilon-s}dx
	$$ to derive
	$$
	\tau\int_\Omega\frac{1}{(\rho+\delta)^s}dx\leq \tau\int_\Omega(\rho+\delta)^{p-s}dx+c\io (\rho+\delta)^{1-\varepsilon-s}dx.
	$$
	Recall the interpretation inequality
	$$
	c\io (\rho+\delta)^{1-\varepsilon-s}dx=c\io\left(\frac{1}{\rho+\delta}\right)^{s-(1-\varepsilon)}dx\leq \frac{\tau}{2}\io\left(\frac{1}{\rho+\delta}\right)^{s}dx+c
	$$ and thereby obtain
	$$\int_\Omega\frac{1}{(\rho+\delta)^s}dx \leq
	c\int_\Omega (\rho+\delta)^{p-s}dx+c.$$ If $s\leq p$, then we take
	$\delta \rightarrow 0$ in the above inequality to obtain
	$$\int_\Omega\frac{1}{\rho^s}dx \leq
	c\int_\Omega \rho^{p-s}dx+c.$$ It is not difficult to see that this
	inequalities actually holds for each $s>1$, and thus \eqref{n92}
	follows.
	
	Now we let $v=\frac{1}{\rho^\alpha+\delta}, \delta>0$. Then we can
	easily show that $v$ satisfies the boundary value problem
	\begin{eqnarray*}
		-\Delta v+\frac{2}{v}|\nabla v|^2
		&=&\left(\rt F-\tau+\tau\rho^p\right)
		v^2\equiv G
		\ \ \ \mbox{in}\ \ \ \Omega,\\
		\nabla v\cdot\nu &=& 0\ \ \ \mbox{on}\ \ \ \partial\Omega
	\end{eqnarray*}
	in the sense of distributions. 
	We can conclude from \cite{GT,X4} again that
	$$\|v\|_\infty\leq c\|v\|_2+c\|G\|_p\leq c.$$ The last step is due to \eqref{n92}. This completes
	the proof of Lemma \ref{l3.1}.
\end{proof}

If $\alpha<1$, then our approximate problem can be made a little simpler. For the purpose of comparison, we state the corresponding result in the following
\begin{lem} \label{l3.2}Let $\Omega$ be a bounded domain in $\mathbb{R}^N$ with
	Lipschitz boundary $\po$. Assume that $\alpha\in (0,1)$, $ n\in\mathbb{R}$, and \begin{equation}\label{oom1}
	p>\max\{\frac{N}{2},\ \ 2
	\}.
	\end{equation} Then for each $1>\tau>0$ and each $f\in \lno$
	there is a solution $(\rho, F)$ with $\rho\geq 0$ in the space $\left(W^{1,2}(\Omega)\cap L^\infty(\Omega)\right)^2$ to the
	problem
	\begin{eqnarray}
	-\mbd\left[\rnt\nabla F\right]+\tau F&=&\frac{\rho-f}{\tau} \ \ \ \mbox{in}\ \ \ \Omega,\label{2291}\\
	- \Delta \rho^\alpha+\tau \rho^p &=&-F\rho^{1-\alpha}\nonumber\\
	&&+\tau\ \ \ \mbox{in}\ \ \ \Omega,\label{oor1}\\
	\nabla \rho^\alpha\cdot\nu&=&\nabla F\cdot\nu =0\ \ \
	\mbox{on} \ \ \ \po.\label{2292}
	\end{eqnarray}
	Furthermore, we have that $\rho, F\in
	C^{0,\beta}(\overline{\Omega)}$ for some $\beta\in (0,1)$ and
	$\rho\geq c_0$ in $\Omega$ for some $c_0>0$,
	where $\beta, c_0$ depend on the given data.
\end{lem}

The proof is similar to that of the previous lemma.

We are ready to construct our approximate solutions. Let $T>0$ be given. We divide the time interval $[0,T]$ into $j$
equal subintervals, $j\in\{1,2,\cdots\}$. Set
$$\tau=\frac{T}{j}.$$
We discretize and regularize the system \eqref{311}-\eqref{312} as
follows. For $k=1, \cdots, j$, solve recursively the systems
\begin{eqnarray}
\frac{\rho_k-\rho_{k-1}}{\tau}&=&-\textup{div}\left[\rnk\nabla \fk\right]+\tau\fk \ \ \ \mbox{in}\ \ \ \Omega,\label{293}\\
- \Delta \rk^\alpha+\tau \rk^p& =&-\rtk F_k
+\tau\ \ \ \mbox{in}\ \ \ \Omega,\label{294}\\
\nabla \rk^\alpha\cdot\nu&=&\nabla\fk\cdot\nu =0\ \ \
\mbox{on} \ \ \ \po,\\
\rho_0(x)&=&u_0(x). \label{295}
\end{eqnarray}
Define the functions
\begin{eqnarray*}
	\njt(x,t) &=& (t-t_{k-1})\frac{\rho_k(x)-\rho_{k-1}(x)}{\tau} +\rho_{k-1}(x), \ \ \  x\in\Omega, \ \  t\in (t_{k-1}, t_k],\\
	\nj(x,t) &=& \rks(x),\ \ \  x\in\Omega, \ \  t\in (t_{k-1}, t_k],\\
	\fj(x,t) &=& \fk(x),\ \ \  x\in\Omega, \ \  t\in (t_{k-1}, t_k],
\end{eqnarray*}
We can rewrite the system \eqref{293}-\eqref{295}
as
\begin{eqnarray}
\frac{\partial \njt}{\partial t}&=& -\textup{div}\left[\rnj\nabla\fj\right]+\tau\fj \ \ \mbox{in}\ \ \Omega_T,\label{n82}\\
- \Delta\nj^\alpha+\tau\nj^p &=&-\rtj\fj
+\tau\ \ \mbox{in}\ \ \Omega_T,\label{n91}\\
\nabla \nj^\alpha\cdot\nu&=&\nabla\fj\cdot\nu =0\ \ \ \mbox{on} \ \ \ \pot,\\
\overline{u}_j(x,0)&=&u_0(x) \ \ \ \mbox{on}\ \ \
\Omega.\label{n121}
\end{eqnarray}


\begin{lem} \label{l3.3}Let $\varepsilon\in [0,\frac{4}{5})$ be given as in Corollary \ref{cor2.2}. Assume that $\alpha\in [1, \frac{3}{2}), n\in (0,2-\varepsilon)$, and $p>\max\{\frac{N}{2},2\}$. Then there is a $\tau_0\in(0,1)$ such that
	\begin{eqnarray}
	\lefteqn{\int_{\Omega_t}(\Delta\nj^\alpha)^2
		dxds+\tau\int_{\Omega_t}|\Delta\nj^{\frac{\alpha+\varepsilon}{2}}|^2dxds}\nonumber\\
	&&+\tau\int_{\Omega_t}\nj^{p+\alpha-2}|\nabla\nj|^2dxds
	+\tau^2\int_{\Omega_t}\nj^{p+\varepsilon-2}|\nabla\nj|^2dxds\nonumber\\
	&&+\tau\int_{\Omega_t}\nj^{\alpha-2}|\nabla\nj|^2dxds+\tau^2\int_{\Omega_t}\nj^{\varepsilon-2}|\nabla\nj|^2dxds\nonumber\\
	&&+\max_{0\leq t\leq T}\int_\Omega G(\nj(x,t))dx\leq c\label{n58}
	\end{eqnarray}
	for all $\tau\in (0, \tau_0)$, where
	\begin{equation}
	G(s)=\left\{\begin{array}{ll}
	s & \mbox{if $n> 1$,}\\
	s^{2-n} & \mbox{if $n< 1$,}\\
	s\ln s-s & \mbox{if $n= 1$.}
	\end{array}\right.
	\end{equation}
	Here and in what follows $c$ denotes a positive
	constant independent of $j$.
\end{lem}

By the proof of  Corollary \ref{cor2.2}, we can take $\varepsilon=0$ if $N\leq 4$. Thus in this case $n\in (0,2)$.
\begin{proof} For $r\in[0,\infty)$ we define
	\begin{equation}\label{n56}
	K(r)=\int_1^{r}\frac{1}{(s+\tau)^n}ds=\left\{\begin{array}{ll}
	\frac{1}{1-n}\left[(r+\tau)^{1-n}-(1+\tau)^{1-n}\right]&\mbox{if $n\ne 1$,}\\
	\ln(s+\tau)-\ln(1+\tau)&\mbox{if $n= 1$.}
	\end{array}\right.
	\end{equation}
	We use $\kr$ as a test function in \eqref{293} to
	obtain
	\begin{equation}\label{314}
	\int_{\Omega}\fk\Delta\rk dx-\tau\int_{\Omega}\fk\kr
	dx+\frac{1}{\tau}\int_{\Omega}(\rk-\rho_{k-1})\kr dx
	=0.
	\end{equation} We proceed to estimate each integral in the above
	equation. For this purpose, we solve \eqref{294} for $\fk$
	to yield
	\begin{equation}\label{dsa11}
	\fk=\rk^{\alpha-1}\Delta\rk^\alpha+\tau\rkm\Delta\rk^\alpha-\tau\rk^{p+\alpha-1}-\tau^2\rk^{p+\varepsilon-1}+\tau\rk^{\alpha-1}+\tau^2\rkm.
	\end{equation}
	This can be done because $\rk$ is bounded away from $0$ below.
	Observe
	\begin{equation}
	\frac{1}{\tau}\int_{\Omega}(\rk-\rho_{k-1})\kr dx\geq\frac{1}{\tau}\int_\Omega\int_{\rho_{k-1}}^{\rk}K(r)drdx.
	\end{equation}
	This is due to the fact that $K(r)$ is an increasing function on $[0,\infty)$. Substituting \eqref{dsa11} into the first integral in \eqref{314} gives
	\begin{eqnarray}
	\int_{\Omega}\fk\Delta\rk dx
	&=&\int_{\Omega}\Delta\rk\rk^{\alpha-1}\Delta\rk^\alpha dx+\tau\int_{\Omega}\Delta\rk\rkm\Delta\rk^\alpha dx\nonumber\\
	&&+
	(p+\alpha-1)\tau\int_{\Omega}\rk^{p+\alpha-2}|\nabla\rk|^2dx+(p+\varepsilon-1)\tau^2\int_{\Omega}\rk^{p+\varepsilon-2}|\nabla\rk|^2dx\nonumber\\
	&&-(\alpha-1)\tau\int_{\Omega}\rk^{\alpha-2}|\nabla\rk|^2dx-(\varepsilon-1)\tau^2\int_{\Omega}\rk^{\varepsilon-2}|\nabla\rk|^2dx.\label{dsa22}
	\end{eqnarray}
	By Corollaries \ref{cor2.1} and \ref{cor2.2}, we have
	\begin{eqnarray}
	\int_\Omega\rkam\drka\drk&\geq&c\int_\Omega(\drka)^2dx,\label{dsa33}\\
	\int_\Omega\rkm\drka\drk&\geq&c\int_\Omega(\drk^{\frac{\alpha+\varepsilon}{2}})^2dx.\label{dsa44}
	\end{eqnarray}
	If $\alpha> 1$, then the coefficient of the sixth integral in \eqref{dsa22} is negative. To address this issue, we compute the integral as follows:
	\begin{eqnarray}
	\int_\Omega\rk^{\alpha-2}\nrks dx&=&\int_\Omega|\rk^{\frac{\alpha}{2}-1}\nrk|^2dx\nonumber\\
	&=&\frac{4}{\alpha^2}\int_\Omega|\nrk^{\frac{\alpha}{2}}|^2dx\nonumber\\
	&\leq& \frac{\delta}{\tau}\int_\Omega|\nrk^{\frac{\alpha}{2}}|^4dx+\tau c(\delta)\nonumber\\
	&\leq& \frac{9\delta}{16\tau}\int_\Omega|\drka|^2dx+c(\delta),\label{dsa66}
	\end{eqnarray}
	where $\delta$ is a positive number. Using \eqref{dsa33}-\eqref{dsa66} in
	\eqref{dsa22} and choosing $\delta$ suitably small, we obtain
	\begin{eqnarray}
	\int_{\Omega}\fk\Delta\rk dx
	&\geq&c\int_{\Omega}(\Delta\rk^\alpha)^2 dx+c\tau\int_{\Omega}(\Delta\rk^{\frac{\alpha+\varepsilon}{2}})^2 dx\nonumber\\
	&&+
	(p+\alpha-1)\tau\int_{\Omega}\rk^{p+\alpha-2}|\nabla\rk|^2dx+(p+\varepsilon-1)\tau^2\int_{\Omega}\rk^{p+\varepsilon-2}|\nabla\rk|^2dx\nonumber\\
	&&+(1-\varepsilon)\tau^2\int_{\Omega}\rk^{\varepsilon-2}|\nabla\rk|^2dx -c.\label{dsa77}
	\end{eqnarray}
	Plugging \eqref{dsa11} into the second integral in \eqref{314} yields
	\begin{eqnarray}
	-\tau\int_\Omega\fk\kr dx &=&-\tau\int_\Omega(\rkam+\tau\rkm)\kr\drka dx\nonumber\\
	&&+\tau^2\int_\Omega(\rk^{p+\alpha-1}+\tau\rk^{p+\varepsilon-1}-\rkam-\tau\rkm)\kr dx\nonumber\\
	&\equiv&I_{1,k}+I_{2,k}.
	\end{eqnarray}
	A simple integration by parts enables us to represent $I_{1,k}$ in the form
	\begin{eqnarray}
	I_{1,k}&=&\alpha\tau\int_\Omega((\alpha-1)\rk^{\alpha-\varepsilon}-(1-\varepsilon)\tau)\kr\rk^{\alpha+\varepsilon-3}\nrks dx\nonumber\\
	&&+\alpha\tau\int_\Omega\frac{\rk^{\alpha-\varepsilon}+\tau}{(\rk+\tau)^n}\rk^{\alpha+\varepsilon-2}\nrks dx.\label{33.44}
	\end{eqnarray}
	We first consider the case where
	\begin{equation}
	\alpha>1.
	\end{equation} Set
	\begin{equation}
	b_\tau\equiv\left(\frac{1-\varepsilon}{\alpha-1}\right)^{\frac{1}{\alpha-\varepsilon}}\tau^{\frac{1}{\alpha-\varepsilon}}.
	\end{equation}
	Then we can choose $\tau_0\in(0,1)$ so that 
	\begin{equation}
	b_{\tau_0}<1.
	\end{equation}
	From here on, we assume that
	\begin{equation}
	\tau\leq\tau_0.
	\end{equation}
	Recall from the definition of $K(r)$ that
	\begin{equation}
	K(r)(r-1)\geq 0 \ \ \ \mbox{on $[0,\infty)$}.\label{3.444}
	\end{equation}
	We can easily deduce that the integrand of the first integral in \eqref{33.44} is non-positive only
	on the set
	$$A_k=\{x\in\Omega: b_\tau\leq\rk(x)\leq 1\}.$$
	On this set, we have
	$$-\kr=\int_{\rk}^{1} \frac{1}{(s+\tau)^n}ds\leq \int_{\rk}^{1}\frac{1}{s^n}d\leq \left\{\begin{array}{ll}
	\frac{\rk^{1-n}}{n-1}& \mbox{if $n>1$},\\
	-\ln\rk & \mbox{if $n=1$},\\
	\frac{1}{1-n}& \mbox{if $n<1$.}
	\end{array}\right.$$
	Our assumptions on $\alpha,n,\varepsilon$ imply that
	\begin{eqnarray}
	\rk^{2\alpha-n-\varepsilon}&\leq& 1\ \ \ \mbox{on $A_k$, and }\\
	\tau\rk^{-(\alpha-\varepsilon)}&\leq& c\ \ \ \mbox{on $A_k$}
	\end{eqnarray}  Keeping these in mind, we calculate, for $n>1$, that
	\begin{eqnarray}
	I_{1,k}&\geq&\alpha\tau\int_{A_k}((\alpha-1)\rk^{\alpha-\varepsilon}-(1-\varepsilon)\tau)\kr\rk^{\alpha+\varepsilon-3}\nrks dx\nonumber\\
	&\geq&\alpha(\alpha-1)\tau\int_{A_k}\kr\rk^{2\alpha-3}\nrks dx\nonumber\\
	&\geq&-c\tau\int_{A_k}\rk^{2\alpha-2-n}\nrks dx\nonumber\\
	&\geq&-c\int_{A_k}\rk^{3\alpha-2-n-\varepsilon}\nrks dx\nonumber\\
	&\geq&-c\int_{A_k}\rk^{2\alpha-n-\varepsilon}|\nrk^{\frac{\alpha}{2}}|^2 dx\nonumber\\
	&\geq&- \delta\int_\Omega |\nrk^{\frac{\alpha}{2}}|^4 dx-c(\delta)\nonumber\\
	&\geq&- \delta\int_\Omega(\Delta\rka)^2 dx-c(\delta)
	,\label{352}
	\end{eqnarray}
	where $\delta>0$. The above inequality still holds if $n\leq 1$. Thus if $\delta$ is sufficiently small, this term can be incorporated into the second integral in \eqref{dsa77}.


	If $\alpha=1$, then we can express $I_{1,k}$ in the form
	\begin{equation}
	I_{1,k}=\tau\int_\Omega\left[-(1-\varepsilon)\tau\kr+\frac{\rk(\rk^{1-\varepsilon}+\tau)}{(\rk+\tau)^n}\right]\rk^{\varepsilon-2}\nrks dx.\label{33.48}
	\end{equation}
	Set
	$$B_k=\{x\in\Omega:\rk(x)\geq 1\}.$$
	On the set $B_k$, we have
	$$
	\kr\leq\left\{\begin{array}{ll}
	\frac{1}{n-1}& \mbox{if $n>1$,}\\
	\ln\rk& \mbox{if $n=1$,}\\
	\frac{1}{1-n}\rk^{1-n}& \mbox{if $n<1$.}
	\end{array}\right.$$
	Furthermore, there holds
	$$\rk^{\varepsilon-n-1}\leq\rk^{p-1}\ \ \ \mbox{on $B_k$.}$$
	For $n<1$, we estimate
	\begin{eqnarray}
	I_{1,k}&\geq&-(1-\varepsilon)\tau^2\int_\Omega\kr\rk^{\varepsilon-2}\nrks dx\nonumber\\
	&\geq&-c\tau^2\int_{B_k}\rk^{\varepsilon-1-n}\nrks dx\nonumber\\
	&\geq&-c\tau^2\int_{\Omega}\rk^{p-1}\nrks dx.\label{355}
	\end{eqnarray}
	In view of the coefficient of the fourth integral in \eqref{dsa77},  we just need to impose a further condition
	\begin{equation}
	c\tau_0<p,
	\end{equation}
	where $c$ is the same as the one in the last line of \eqref{355}. Then the fourth term in \eqref{dsa77} can absorb the term on the right-hand side of \eqref{355}. The case where $n\geq1$ can be handled in a similar manner.
	
	We can express $I_{2,k}$ in the form
	\begin{equation}
	I_{2,k}=\tau^2\int_\Omega\kr\rkm(\rk^{\alpha-\varepsilon}+\tau)(\rk^p-1)dx.
	\end{equation}
	The integrand in the above integral is always non-negative.
	
	Summarizing our preceding estimates, we obtain
	\begin{eqnarray}
	\lefteqn{\int_{\Omega}(\Delta\rk^\alpha)^2 dx+\tau\int_{\Omega}(\Delta\rk^{\frac{\alpha}{2}})^2 dx}\nonumber\\
	&&+
	\tau\int_{\Omega}\rk^{p+\alpha-2}|\nabla\rk|^2dx+\tau^2\int_{\Omega}\rk^{p+\varepsilon-2}|\nabla\rk|^2dx\nonumber\\
	&&+\tau\int_{\Omega}\rk^{\alpha-2}|\nabla\rk|^2dx+\tau^2\int_{\Omega}\rk^{\varepsilon-2}|\nabla\rk|^2dx +\frac{1}{\tau}\int_\Omega\int_{\rho_{k-1}}^{\rk}K(r)drdx\leq c\label{dmm11}
	\end{eqnarray}
	for $\tau\in(0,\tau_0)$. Multiplying through this inequality by $\tau$ and summing up over
	$k$, we obtain
	\begin{eqnarray}
	\lefteqn{\int_{\Omega_t}(\Delta\nj^\alpha)^2 dxds+\tau\int_{\Omega_t}(\Delta\nj^{\frac{\alpha}{2}})^2 dxds}\nonumber\\
	&&+
	\tau\int_{\Omega_t}\nj^{p+\alpha-2}|\nabla\nj|^2dxds+\tau^2\int_{\Omega_t}\nj^{p+\varepsilon-2}|\nabla\nj|^2dxds\nonumber\\
	&&+\tau\int_{\Omega_t}\nj^{\alpha-2}|\nabla\nj|^2dxds+\tau^2\int_{\Omega_t}\nj^{\varepsilon-2}|\nabla\nj|^2dxds +\int_\Omega\int_{u_0}^{\nj}K(r)drdx\leq c
	\end{eqnarray}
	for $\tau\in(0,\tau_0)$.
	By the definition of $K(r) $, we have 
	\begin{eqnarray}
	\int_{u_0}^{\nj}K(r)dr&=&\int_{u_0}^{1}K(r)dr+\int_{1}^{\nj}K(r)dr\nonumber\\
	& \geq &\left\{ \begin{array}{ll}
	\frac{(\nj+\tau)^{2-n}}{(1-n)(2-n)}-\frac{(1+\tau)^{1-n}\nj}{1-n}-c &\mbox{if $n\ne 1$,}\\
	(\nj+\tau)\ln(\nj+\tau)-(1+\ln(1+\tau))\nj -c &\mbox{if $n= 1$.}
	\end{array}\right.\label{1359}
	\end{eqnarray} Here the fact that the second integral in \eqref{1359} is bounded is due to our assumptions on
	$u_0$. The rest is rather obvious.
	The proof is complete.
\end{proof}	
\begin{lem}\label{l3.4} Let the assumptions of Lemma \ref{l3.3} hold.
	Then we have
	\begin{eqnarray}
	&&\int_{\Omega_t}\nj^{\alpha-1}(\Delta\nja)^2 dxds+\tau\int_{\Omega_t}\nj^{\varepsilon-1}(\Delta\nja)^2 dxds\nonumber\\
	&&+
	\tau\int_{\Omega_t}\nj^{p+2\alpha-3}|\nabla\rk|^2dxds+\tau^2\int_{\Omega_t}\nj^{p+\varepsilon+\alpha-3}|\nabla\nj|^2dxds\nonumber\\
	&&+\tau^2\int_{\Omega_t}\nj^{\alpha+\varepsilon-3}|\nabla\nj|^2dxds+\max_{0\leq t\leq T} \int_\Omega\nj^{1+(\alpha-n)^+}(x,t)dx\leq c.\label{dsa7711}
	\end{eqnarray}
\end{lem}

\begin{proof} 
	Here we use a different test function. Let 
	\begin{equation}\label{1n56}
	L(r)=\int_1^{r}\frac{\alpha s^{\alpha-1}}{(s+\tau)^n}ds.
	\end{equation}
	Then use $\lr$ as a test function in \eqref{293} to
	obtain
	\begin{equation}\label{3141}
	-\int_{\Omega}\nabla\fk\cdot\nabla\rka dx-\tau\int_{\Omega}\fk\lr
	dx+\frac{1}{\tau}\int_{\Omega}(\rk-\rho_{k-1})\lr dx
	=0.
	\end{equation}
	The first integral in the above equation is equal to  
	\begin{eqnarray}
	\int_{\Omega}\fk\Delta\rka dx
	&=&\int_{\Omega}\rk^{\alpha-1}(\Delta\rk^\alpha)^2 dx+\tau\int_{\Omega}\rk^{\varepsilon-1}(\Delta\rk^\alpha)^2 dx\nonumber\\
	&&+
	(p+\alpha-1)\alpha\tau\int_{\Omega}\rk^{p+2\alpha-3}|\nabla\rk|^2dx\nonumber\\
	&&+(p+\varepsilon-1)\alpha\tau^2\int_{\Omega}\rk^{p+\varepsilon+\alpha-3}|\nabla\rk|^2dx\nonumber\\
	&&-(\alpha-1)\alpha\tau\int_{\Omega}\rk^{2\alpha-3}|\nabla\rk|^2dx-(\varepsilon-1)\alpha\tau^2\int_{\Omega}\rk^{\alpha+\varepsilon-3}|\nabla\rk|^2dx.\label{dsa221}
	\end{eqnarray}
	Owing to  Lemma \ref{l2.4}, for each $\alpha \in [1,\frac{5}{3})$ there is a positive number $c$ with the property
	\begin{equation}
	\int_\Omega\rkam(\drka)^2dx\geq c\int_\Omega(\drk^{\frac{3\alpha-1}{2}})^2dx.\label{dsa331}
	\end{equation}
	If $\alpha> 1$, then the coefficient of the sixth integral in \eqref{dsa221} is negative. We will use \eqref{dsa331} to deal with the term. To do this, we estimate
	\begin{eqnarray}
	\int_\Omega\rk^{2\alpha-3}\nrks dx&=&\int_\Omega\rk^{\frac{\alpha-1}{2}}\rk^{\frac{3\alpha-5}{2}}|\nrk|^2dx\nonumber\\
	&\leq& \frac{\delta}{\tau}\int_\Omega\rk^{3\alpha-5}|\nrk|^4dx+\tau c(\delta)\int_\Omega \rk^{\alpha-1}\nonumber\\
	&=& \frac{4^4\delta}{(3\alpha-1)^4\tau}\int_\Omega|\nrk^{\frac{3\alpha-1}{4}}|^4dx+\tau c(\delta)\int_\Omega \rk^{\alpha-1}dx\nonumber\\
	&\leq& \frac{144\delta}{(3\alpha-1)^4\tau}\int_\Omega|\drk^{\frac{3\alpha-1}{2}}|^2dx+c(\delta)\int_\Omega \rk^{\alpha-1}dx\nonumber\\
	&\leq& \frac{c\delta}{\tau} \int_\Omega\rkam(\drka)^2dx+c(\delta)\int_\Omega \rk^{\alpha-1}dx,\label{dsa661}
	\end{eqnarray}
	where $\delta$ is a positive number. Using \eqref{dsa331}-\eqref{dsa661} in
	\eqref{dsa221} and choosing $\delta$ suitably small, we obtain
	\begin{eqnarray}
	\int_{\Omega}\fk\Delta\rka dx
	&\geq&c\int_\Omega\rkam(\drka)^2dx+c\tau\int_{\Omega}\rk^{\varepsilon-1}(\Delta\rka)^2 dx\nonumber\\
	&&+
	(p+\alpha-1)\alpha\tau\int_{\Omega}\rk^{p+2\alpha-3}|\nabla\rk|^2dx\nonumber\\
	&&+(p+\varepsilon-1)\alpha\tau^2\int_{\Omega}\rk^{p+\varepsilon+\alpha-3}|\nabla\rk|^2dx\nonumber\\
	&&+(1-\varepsilon)\alpha\tau^2\int_{\Omega}\rk^{\alpha+\varepsilon-3}|\nabla\rk|^2dx -c\int_\Omega\rk^{\alpha-1}.\label{dsa771}
	\end{eqnarray}
	Plugging \eqref{dsa11} into the second integral in \eqref{3141} yields
	\begin{eqnarray}
	-\tau\int_\Omega\fk\lr dx &=&-\tau\int_\Omega(\rkam+\tau\rkm)\lr\drka dx\nonumber\\
	&&+\tau^2\int_\Omega(\rk^{p+\alpha-1}+\tau\rk^{p+\varepsilon-1}-\rkam-\tau\rkm)\lr dx\nonumber\\
	&\equiv&J_{1,k}+J_{2,k}.
	\end{eqnarray}
	The term $J_{1,k}$ can be written in the form
	\begin{eqnarray}
	J_{1,k}&=&\alpha\tau\int_\Omega((\alpha-1)\rk^{\alpha-\varepsilon}-(1-\varepsilon)\tau)\lr\rk^{\alpha+\varepsilon-3}\nrks dx\nonumber\\
	&&+\alpha^2\tau\int_\Omega\frac{\rk^{\alpha-\varepsilon}+\tau}{(\rk+\tau)^n}\rk^{2\alpha+\varepsilon-3}\nrks dx.\label{33.441}
	\end{eqnarray}
	If	$\alpha>1$,
	we can define $A_k, b_\tau$ as before.
	Note that the integrand of the first integral in \eqref{33.441} is non-positive only
	on the set
	$A_k.$
	For $x\in A_k$,  we have
	\begin{equation}
	-\lr=\int_{\rk}^{1} \frac{\alpha s^{\alpha-1}}{(s+\tau)^n}ds\leq\int_{\rk}^{1}\alpha s^{\alpha-n-1}ds\leq\left\{\begin{array}{ll}
	\frac{\alpha}{\alpha-n}& \mbox{if $\alpha>n$,}\\
	-\alpha\ln\rk& \mbox{if $\alpha=n$,}\\
	\frac{\alpha}{\alpha-n}\rk^{\alpha-n}& \mbox{if $\alpha<n$.}
	\end{array}\right.
	\end{equation}
	If $\alpha<n$,  we have
	\begin{eqnarray}
	J_{1,k}&\geq&\alpha\tau\int_{A_k}((\alpha-1)\rk^{\alpha-\varepsilon}-(1-\varepsilon)\tau)\lr\rk^{\alpha+\varepsilon-3}\nrks dx\nonumber\\
	&\geq&\alpha(\alpha-1)\tau\int_{A_k}\lr\rk^{2\alpha-3}\nrks dx\nonumber\\
	&\geq&-c\tau\int_{A_k}\rk^{3\alpha-3-n}\nrks dx\nonumber\\
	&\geq&-c\int_{A_k}\rk^{4\alpha-3-n-\varepsilon}\nrks dx\nonumber\\
	&\geq&-c\int_{A_k}\rk^{\frac{5\alpha-1}{2}-n-\varepsilon}|\nrk^{\frac{3\alpha-1}{4}}|^2 dx\nonumber\\
	&\geq&- \delta\int_\Omega |\nrk^{\frac{3\alpha-1}{4}}|^4 dx-c(\delta)\nonumber\\
	&\geq&- \delta\int_\Omega(\Delta\rk^{\frac{3\alpha-1}{2}})^2 dx-c(\delta)\nonumber\\
	&\geq&- \delta\int_\Omega\rkam(\Delta\rka)^2 dx-c(\delta)
	,\label{1352}
	\end{eqnarray}
	where $\delta>0$. Thus $J_{1,k}$ can be absorbed into the second integral in \eqref{dsa771} if $\delta$ is small. If $\alpha\geq n$, a similar argument can be made.
	

	If $\alpha=1$, then we can express $J_{1,k}$ in the form
	\begin{equation}
	J_{1,k}=\tau\int_\Omega\left[-(1-\varepsilon)\tau\lr+\frac{\rk(\rk^{1-\varepsilon}+\tau)}{(\rk+\tau)^n}\right]\rk^{\varepsilon-2}\nrks dx.\label{133.48}
	\end{equation}
	Let
	$B_k=\{x\in\Omega:\rk(x)\geq 1\}$ be given as before.
	On the set $B_k$, we have
	$$
	\lr\leq\left\{\begin{array}{ll}
	\frac{1}{n-1}& \mbox{if $n>1$,}\\
	\ln\rk& \mbox{if $n=1$,}\\
	\frac{1}{1-n}\rk^{1-n}& \mbox{if $n<1$.}
	\end{array}\right.$$
	Furthermore, there holds
	$$\rk^{\varepsilon-n-1}\leq\rk^{p-1}\ \ \ \mbox{on $B_k$.}$$
	For $n<1$, we estimate
	\begin{eqnarray}
	J_{1,k}&\geq&-(1-\varepsilon)\tau^2\int_\Omega\lr\rk^{\varepsilon-2}\nrks dx\nonumber\\
	&\geq&-c\tau^2\int_{B_k}\rk^{\varepsilon-1-n}\nrks dx\nonumber\\
	&\geq&-c\tau^2\int_{\Omega}\rk^{p-1}\nrks dx.\label{3551}
	\end{eqnarray}
	In view of the coefficient of the fourth integral in \eqref{dsa771},  we just need to impose a further condition
	\begin{equation}
	c\tau_0<p,
	\end{equation}
	where $c$ is the same as the one in the last line of \eqref{3551}. The case where $n\geq 1$ can be handled in a similar manner.
	
	We can express $J_{2,k}$ in the form
	\begin{equation}
	J_{2,k}=\tau^2\int_\Omega\lr\rkm(\rk^{\alpha-\varepsilon}+\tau)(\rk^p-1)dx.
	\end{equation}
	The integrand in the above integral is always non-negative.
	
	If $n>1$ and $ \alpha\ne n$, we have
	\begin{eqnarray}
	L(r)&=&\int_{1}^{r}\frac{\alpha s^{\alpha-1}}{(s+\tau)^n}ds\nonumber\\
	&=&\int_{1}^{r}\alpha s^{\alpha-1}d\frac{(s+\tau)^{1-n}}{1-n}\nonumber\\
	&=&\frac{\alpha(1+\tau)^{1-n}}{n-1}-\frac{\alpha}{n-1}r^{\alpha-1}(r+\tau)^{1-n}+\frac{\alpha(\alpha-1)}{n-1}\int_{1}^{r}\frac{s^{\alpha-2}}{(s+\tau)^{n-1}}ds\nonumber\\
	&\geq&\left\{\begin{array}{ll}
	\frac{\alpha}{\alpha-n}(r+\tau)^{\alpha-n}+\frac{\alpha(1+\tau)^{1-n}(\alpha-n)-\alpha(\alpha-1)(1+\tau)^{\alpha-n}}{(n-1)(\alpha-n)} & \mbox{if $r>1$,}\\
	\frac{\alpha}{\alpha-n}r^{\alpha-n}+\frac{\alpha(1+\tau)^{1-n}(\alpha-n)-\alpha(\alpha-1)}{(n-1)(\alpha-n)} & \mbox{if $r\leq1$.}	
	\end{array}\right.\label{3375}
	\end{eqnarray}
	Similarly, if $n>1$ and $ \alpha= n$, we have
	\begin{equation}
	L(r)
	\geq\left\{\begin{array}{ll}
	n \ln\frac{r+\tau}{1+\tau}+\frac{n((1+\tau)^{1-n}-1)}{n-1}& \mbox{if $r>1$,}\\
	n\ln r+\frac{n((1+\tau)^{1-n}-1)}{n-1} & \mbox{if $r\leq1$.}	
	\end{array}\right.
	\end{equation}
	Thus we always have
	$$\int_{\Omega}\int^{\overline{u}_j}_{u_0}L(r)dr
	dx\geq c\int_{\Omega} \nj^{1+(\alpha-n)^+} dx-c,$$ where
	$\overline{u}_j=\overline{u}_j(x,t)$, provided that $n>1$. It is not difficult to see the same inequality holds for $n\leq 1$. 
	Collecting all the previous estimates in \eqref{3141}, we arrive at
	\begin{eqnarray}
	&&\int_{\Omega}(\Delta\rk^{\frac{3\alpha-1}{2}})^2 dx+\tau\int_{\Omega}\rk^{\varepsilon-1}(\Delta\rka)^2 dx\nonumber\\
	&&+
	\tau\int_{\Omega}\rk^{p+2\alpha-3}|\nabla\rk|^2dx+\tau^2\int_{\Omega}\rk^{p+\varepsilon+\alpha-3}|\nabla\rk|^2dx\nonumber\\
	&&+\tau^2\int_{\Omega}\rk^{\alpha+\varepsilon-3}|\nabla\rk|^2dx +\frac{1}{\tau}\int_\Omega\int_{\rho_{k-1}}^{\rk}L(r)drdx\nonumber\\
	&\leq &c\int_\Omega\rk^{\alpha-1} dx+c.
	\end{eqnarray}
	Multiply through the inequality by $\tau$, note that $0\leq\alpha-1<1$, and sum up over $k$ to obtain the desired result.
	The
	proof is complete.
\end{proof}
\begin{lem}\label{l3.5}Let the assumptions of Lemma \ref{l3.3} hold. Then the sequence $\{\nja\}$ is bounded in
	$L^2(0,T;W^{2,2}(\Omega))$.
\end{lem}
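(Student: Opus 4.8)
The plan is to extract everything from the bound $\int_{\Omega_T}(\Delta\nj^\alpha)^2\,dxdt\le c$, which is already contained in Lemma 3.3 (take $t=T$ in (\ref{n58})), together with the convexity of $\Omega$ and a cheap uniform-in-time moment bound on $\nj$. Throughout put $v=\nj^\alpha$ and $\bar v(t)=|\Omega|^{-1}\io v\,dx$; recall each $v(\cdot,t)$ is smooth and satisfies $\nabla v\cdot\nu=0$ on $\po$.

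First I would recover the second- and first-order parts of the $\wt$-norm. Because $\Omega$ is bounded and convex, inequality (\ref{n231}) applied with $\gamma=\alpha$ gives $\io|\nss v|^2\,dx\le\io(\Delta v)^2\,dx$ for a.e.\ $t$. Moreover $\io\Delta v\,dx=0$ by the Neumann condition, so by the Poincar\'e--Wirtinger inequality
\begin{equation*}
\io|\nabla v|^2\,dx=-\io(v-\bar v(t))\Delta v\,dx\le C_P\left(\io|\nabla v|^2\,dx\right)^{1/2}\left(\io(\Delta v)^2\,dx\right)^{1/2},
\end{equation*}
whence $\io|\nabla v|^2\,dx\le C\io(\Delta v)^2\,dx$; and for the $L^2$-part, Poincar\'e again yields $\io v^2\,dx\le C\io(\Delta v)^2\,dx+C\bar v(t)^2$. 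Integrating in $t$ and invoking $\int_{\Omega_T}(\Delta\nj^\alpha)^2\,dxdt\le c$, the whole matter reduces to proving $\int_0^T\bar v(t)^2\,dt\le c$.

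Next I would bound the mean $\bar v(t)$. From (\ref{n58}) and the three forms of $G$, one checks in each case that $\max_{0\le t\le T}\io\nj\,dx\le c$, i.e.\ $\io v^{1/\alpha}\,dx\le c$ uniformly in $t$. If $\alpha=1$ this is already $\bar v(t)\le c$ and we are done, so suppose $\alpha>1$. Fix the Sobolev exponent $q$ of the embedding $\wt\hookrightarrow L^q(\Omega)$ ($q=\frac{2N}{N-4}$ if $N>4$, any finite $q$ if $N=4$, $q=\infty$ if $N<4$); note $q>1$ for every $N$. By the first part, $\|v\|_{L^q(\Omega)}\le C\|v\|_{\wt}\le C\bigl(\|\Delta v\|_{L^2(\Omega)}+\bar v(t)\bigr)$. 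Since $\frac1\alpha<1<q$, interpolating the $L^1$-norm between the (uniformly bounded) $L^{1/\alpha}$-quasinorm and the $L^q$-norm, and then invoking this Sobolev bound, produces a number $\theta\in(0,1)$ with
\begin{equation*}
\bar v(t)\le C\io v\,dx\le C\,\|v\|_{\wt}^{\theta}\le C\bigl(\|\Delta v\|_{L^2(\Omega)}+\bar v(t)\bigr)^{\theta}.
\end{equation*}
As $\theta<1$, Young's inequality absorbs the term $\bar v(t)^\theta$ and leaves $\bar v(t)\le C\|\Delta v\|_{L^2(\Omega)}^{\theta}+C$; hence, by H\"older in $t$ and Lemma 3.3,
\begin{equation*}
\int_0^T\bar v(t)^2\,dt\le C\int_0^T\|\Delta v\|_{L^2(\Omega)}^{2\theta}\,dt+CT\le CT^{1-\theta}\left(\int_{\Omega_T}(\Delta\nj^\alpha)^2\,dxdt\right)^{\theta}+CT\le c.
\end{equation*}
Combined with the previous paragraph this gives $\int_0^T\|\nj^\alpha\|_{\wt}^2\,dt\le c$, as claimed.

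The main obstacle is this mean bound. The only uniform-in-time moments furnished by Lemmas 3.3 and 3.4 are of powers of $\nj$ that do not exceed $\alpha$ (with equality only when $\alpha=1$ or $n=1$), so for $\alpha>1$ one must bootstrap up to the power $\alpha$ by using the $\wt$-regularity gained from $\Delta\nj^\alpha\in L^2(\Omega_T)$ through a Sobolev embedding, and then close the estimate by absorption. The point to watch is that the interpolation exponent $\theta$ must be strictly less than $1$, which holds precisely because the Sobolev exponent $q$ exceeds $1$ in every space dimension; this is also why the bound is genuinely independent of $N$.
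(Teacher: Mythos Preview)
Your argument is correct, but it follows a different path from the paper's. The paper bounds $\io\nja\,dx$ pointwise in $t$ by applying the Sobolev embedding $W^{1,\frac{\alpha N}{\alpha+N}}(\Omega)\hookrightarrow L^\alpha(\Omega)$ to $\nj$ itself; the required gradient norm is then controlled via $\nabla\nj=\frac{2}{\alpha}\nj^{(2-\alpha)/2}\nabla\nj^{\alpha/2}$, H\"older, and the functional inequality $\io|\nabla\nj^{\alpha/2}|^4\,dx\le\frac{9}{16}\io(\Delta\nja)^2\,dx$ of Lemma~\ref{l22}. This yields $\int_0^T\bigl(\io\nja\,dx\bigr)^{4/\alpha}\,dt\le c$, and since $\alpha<2$ the $L^2$-in-time bound on the mean follows, after which the standard interpolation $\io|\nabla\nja|^2\,dx\le c\io|\nss\nja|^2\,dx+c\bigl(\io\nja\,dx\bigr)^2$ closes the estimate. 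Your route instead works entirely with $v=\nja$: Poincar\'e reduces everything to the mean, and you recover the mean by interpolating $\|v\|_1$ between the uniformly bounded $L^{1/\alpha}$-quasinorm (from the $G$-bound in Lemma~3.3) and the $L^q$-norm furnished by the Sobolev embedding of $\wt$, then absorbing. Your approach is a bit more self-contained in that it uses only the $\Delta\nja\in L^2(\Omega_T)$ bound and the convexity inequality (\ref{n231}), bypassing Lemma~\ref{l22}; the paper's approach, on the other hand, gives a slightly stronger time integrability of the mean ($L^{4/\alpha}$ rather than $L^2$), though that extra strength is not needed here.
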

\begin{proof}
	Note that
	$$\nabla\nj=\frac{2}{\alpha}\nj^{\frac{2-\alpha}{2}}\nabla\nj^{\frac{\alpha}{2}}.$$
	We calculate
	\begin{eqnarray}
	\int_\Omega|\nabla\nj|^{\frac{\alpha N}{\alpha+N}} dx &=& \left(\frac{2}{\alpha}\right)^{\frac{\alpha N}{\alpha+N}}\int_\Omega\nj^{\frac{(2-\alpha)\alpha N}{2(\alpha+N)}}|\nabla\nj^{\frac{\alpha}{2}}|^{\frac{\alpha N}{\alpha+N}}dx\nonumber\\
	&\leq & c\left(\int_\Omega\nj^{\frac{2(2-\alpha)\alpha N }{4(\alpha+N)-\alpha N}}\right)^{1-\frac{\alpha N}{4(\alpha+N)}}\left(\int_\Omega |\nabla\nj^{\frac{\alpha}{2}}|^4dx\right)^{\frac{\alpha N}{4(\alpha+N)}}\nonumber\\
	&\leq & c\left(\int_\Omega |\Delta\nj^\alpha|^2dx\right)^{\frac{\alpha N}{4(\alpha+N)}}.
	\end{eqnarray}
	The last step is due to the fact that
	$$\frac{2(2-\alpha)\alpha N }{4(\alpha+N)-\alpha N}\leq 1.$$
	On account of the Sobolev Embedding Theorem, we have
	\begin{eqnarray}
	\left(\int_\Omega\nja dx\right)^{\frac{1}{\alpha}}&\leq& c\left(\int_\Omega|\nabla\nj|^{\frac{\alpha N}{\alpha+N}} dx\right)^{\frac{\alpha+N}{\alpha N}}+c\int_\Omega\nj dx\nonumber\\
	&\leq&c\left(\int_\Omega |\Delta\nj^\alpha|^2dx\right)^{\frac{1}{4}}+c.
	\end{eqnarray}
	Consequently, there holds
	\begin{equation}
	\int_{0}^{T}\left(\int_{\Omega}\nja dx\right)^{\frac{4}{\alpha}}dt\leq c.
	\end{equation}
	Recall the interpolation inequality
	$$	\int_\Omega|\nabla\nja|^2dx\leq c\int_\Omega|\nabla^2\nja|^2dx+c\left(\int_\Omega\nja dx\right)^2.$$
	This, together with the fact that $\alpha\in [1, \frac{3}{2})$, implies the desired result. 		
	
\end{proof}
\begin{lem}\label{l3.6}Let the assumptions of Lemma \ref{l3.3} hold. Then we have $$\tau\int_{\Omega_T}\nj^{p+2\alpha-1+((\alpha-n)^++1)\frac{2}{N}}dxdt\leq c.$$
\end{lem}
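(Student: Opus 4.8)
The plan is to combine the two families of estimates already in hand — the $L^2$ bound on $\Delta\nj^\alpha$ from Lemma 3.3 together with the boundedness of $\{\nj^\alpha\}$ in $L^2(0,T;W^{2,2}(\Omega))$ (Lemma 3.5), and the weighted estimates from Lemma 3.4 — with a Gagliardo–Nirenberg / Sobolev interpolation to upgrade a spatial $W^{2,2}$-type bound on a power of $\nj$ into a space-time $L^q$ bound, and then to balance the exponent $q$ against the $\max_t\int_\Omega\nj^{1+(\alpha-n)^+}dx$ bound. First I would isolate the term we want to control: writing $m=p+2\alpha-1$ and $\kappa=((\alpha-n)^++1)\tfrac{2}{N}$, the claim is $\tau\int_{\Omega_T}\nj^{m+\kappa}\,dxds\le c$. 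The factor $\tau$ is the signal that the estimate should come from the weighted gradient terms carrying a $\tau$ (or $\tau^2$) prefactor in Lemmas 3.3–3.4, specifically the terms $\tau\int_{\Omega_t}\nj^{p+2\alpha-3}|\nabla\nj|^2\,dxds$ in \eqref{dsa7711} and the $\max_{0\le t\le T}\int_\Omega\nj^{1+(\alpha-n)^+}dx$ bound there.

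The key computation is to recognize $\nj^{p+2\alpha-3}|\nabla\nj|^2$ as (a constant times) $|\nabla\nj^{\frac{p+2\alpha-1}{2}}|^2 = |\nabla\nj^{m/2}|^2$, since $\tfrac{d}{ds}s^{m/2}$ has the exponent $\tfrac{m}{2}-1=\tfrac{p+2\alpha-3}{2}$. Thus Lemma 3.4 gives $\tau\int_{\Omega_T}|\nabla\nj^{m/2}|^2\,dxdt\le c$. Now I would apply the Gagliardo–Nirenberg inequality to $w=\nj^{m/2}$ on the bounded domain $\Omega$: for the exponent pair that makes things balance, $\|w\|_{L^{r}(\Omega)}\le c\|\nabla w\|_{L^2(\Omega)}^{\theta}\|w\|_{L^{s}(\Omega)}^{1-\theta}+c\|w\|_{L^s(\Omega)}$, where the low-norm exponent $s$ is chosen so that $s\cdot\tfrac{m}{2}=1+(\alpha-n)^+$, i.e. $s=\tfrac{2(1+(\alpha-n)^+)}{m}$, so that $\|w\|_{L^s(\Omega)}^s=\int_\Omega\nj^{1+(\alpha-n)^+}dx$ is bounded uniformly in $t$ by Lemma 3.4. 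Raising the GN inequality to an appropriate power, integrating in time, and using $\tau\int_{\Omega_T}|\nabla w|^2\le c$ together with $\sup_t\|w\|_{L^s}\le c$, I get $\tau\int_{\Omega_T}\|w\|_{L^r}^{\,q}$-type control; the bookkeeping of the interpolation exponents — choosing $\theta$ and the time-power so that the gradient term appears exactly to the first power (matching the single $\tau$) and the space integrability works out to $r\cdot\tfrac{m}{2}=m+\kappa$ — is what produces the stated exponent $m+\kappa=p+2\alpha-1+((\alpha-n)^++1)\tfrac{2}{N}$. The $\tfrac{2}{N}$ is the fingerprint of the Sobolev/GN gain in dimension $N$, which is consistent with this being the right mechanism.

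Concretely, the standard "parabolic" interpolation that yields a space-time bound from $L^\infty_t L^s_x \cap L^2_t \dot W^{1,2}_x$ control is: for $w\in L^\infty(0,T;L^s)\cap L^2(0,T;W^{1,2})$ one has $w\in L^{2(s+\frac{2s}{N})/s}$-in-the-relevant-sense, and more precisely $\int_{\Omega_T}|w|^{2+\frac{2s}{N}}\le c\,\big(\sup_t\|w\|_{L^s}^{\frac{2s}{N}}\big)\int_0^T\|w\|_{W^{1,2}}^2$. With $\|w\|_{L^s(\Omega)}^s=\int_\Omega\nj^{1+(\alpha-n)^+}dx\le c$ and $\tau\int_0^T\|\nabla w\|_{L^2}^2\le c$ (plus the trivial lower-order piece $\tau\int_{\Omega_T}|w|^2\le c\tau\int_{\Omega_T}\nj^{m}$, which is absorbed by the already-established $L^2$-type bounds on powers of $\nj$ after noting $m=p+2\alpha-1$ and the integrability from Lemmas 3.3–3.5), one obtains $\tau\int_{\Omega_T}|w|^{2+\frac{2}{N}\cdot(1+(\alpha-n)^+)\cdot\frac{2}{m}\cdot\frac m2}$ — i.e. exactly $\tau\int_{\Omega_T}|w|^{2+\frac{2}{N m}\cdot 2(1+(\alpha-n)^+)}$, and since $|w|^2=\nj^m$, the exponent on $\nj$ is $m+\tfrac{2}{N}(1+(\alpha-n)^+)$, which is the claim.

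The main obstacle I expect is twofold: first, verifying that the low-order term $\tau\int_{\Omega_T}|w|^2=\tau\int_{\Omega_T}\nj^{p+2\alpha-1}\,dxdt$ arising from the non-homogeneous GN inequality is itself finite and bounded by $c$ — this requires going back to Lemma 3.3, where $\tau\int_{\Omega_t}\nj^{p+\alpha-2}|\nabla\nj|^2$ and the $\max_t\int_\Omega G(\nj)$ bound together yield, by the same GN argument one level down, a bound on $\tau\int_{\Omega_T}\nj^{\,p+\alpha-1+\frac{2}{N}\cdot(\text{something})}$, and one must check the target exponent $p+2\alpha-1$ is no larger (using $\alpha<\tfrac32$ and $p>\max\{N/2,2\}$). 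Second, one must ensure that the GN exponent pair $(\theta,r)$ is admissible, i.e. that $s\le r$ and $\theta\in(0,1)$, which again comes down to the standing hypotheses on $\alpha,n,p$; since $p$ is large and $\alpha$ is bounded, these are satisfied, but the verification is where the conditions $\alpha\in[1,\frac32)$ and $p>\max\{N/2,2\}$ get used. Once these two checks are in place, summing up (the estimates in Lemmas 3.3–3.4 are already summed over $k$) and discarding the nonnegative extra terms gives the stated bound.
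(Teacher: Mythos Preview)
Your approach is essentially the paper's: write $w=\nj^{m/2}$ with $m=p+2\alpha-1$, use the Sobolev/Gagliardo--Nirenberg interpolation between the $\tau$-weighted $L^2_tL^2_x$ bound on $\nabla w$ coming from the term $\tau\int_{\Omega_t}\nj^{p+2\alpha-3}|\nabla\nj|^2$ in Lemma~3.4 and the $\sup_t\int_\Omega\nj^{1+(\alpha-n)^+}$ bound from the same lemma, and read off the $\frac{2}{N}$ gain in the exponent.

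The one place where you make things harder than necessary is the lower-order term $\int_{\Omega_T}\nj^{p+2\alpha-1}\,dxdt$. You propose to control it by running the same Sobolev argument ``one level down'' from Lemma~3.3 and then checking that the resulting exponent dominates $p+2\alpha-1$; this check can actually fail for large $N$ since the Sobolev gain is only $O(1/N)$ while you need to gain a full $\alpha-1$. The paper sidesteps this entirely: since $p+2\alpha-1 < p+2\alpha-1+\kappa$, a single application of Young's inequality gives
\[
\nj^{\,p+2\alpha-1}\le \delta\,\nj^{\,p+2\alpha-1+\kappa}+c(\delta),
\]
and the first term is absorbed into the left-hand side of the main estimate. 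After this absorption one has
\[
\int_{\Omega_T}\nj^{\,p+2\alpha-1+\kappa}\,dxdt\le c\int_{\Omega_T}\nj^{\,p+2\alpha-3}|\nabla\nj|^2\,dxdt+c,
\]
and multiplying by $\tau$ finishes the proof via Lemma~3.4. So your overall strategy is right; just replace the proposed iteration for the zero-order piece with this absorption.
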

\begin{proof} By the Sobolev inequality, we estimate, for $\alpha>n$, that
	\begin{eqnarray*}
		\int_{\Omega_T}\nj^{p+2\alpha-1+(\alpha-n+1)\frac{2}{N}} dx dt&\leq&
		c\int^T_0\left(\int_\Omega\nj^{\frac{p+2\alpha-1}{2}
			\frac{2N}{N-2}}dx\right)^{\frac{N-2}{N}}\left(\int_\Omega\nj^{\alpha-n+1}
		dx\right)^{\frac{2}{N}}dt\\
		&\leq &
		c\left(\int_{\Omega_T}|\nabla\nj^{\frac{p+2\alpha-1}{2}}|^2dxdt+\int_{\Omega_T}\nj^{p+2\alpha-1}dxdt\right)\nonumber\\
		&&\cdot\left(\sup_{0\leq
			t\leq T}\int_\Omega\nj ^{\alpha-n+1}dx\right)^{\frac{2}{N}}\\
		&\leq
		&c\left(\int_{\Omega_T}\nj^{p+2\alpha-3}|\nabla\nj|^2dxdt+\int_{\Omega_T}\nj^{p+2\alpha-1}dxdt\right)\\
		&\leq &
		c\int_{\Omega_T}\nj^{p+2\alpha-3}|\nabla\nj|^2dxdt\nonumber\\
		&&+\delta\int_{\Omega_T}\nj^{p+2\alpha-1+(\alpha-n+1)\frac{2}{N}}dxdt+c.
	\end{eqnarray*}
	Choosing $\delta$ suitably small yields
	$$\int_{\Omega_T}\nj^{p+2\alpha-1+(\alpha-n+1)\frac{2}{N}}dxdt\leq
	c\int_{\Omega_T}\nj^{p+2\alpha-3}|\nabla\nj|^2dxdt+c.$$
	If $\alpha\leq n$, we have 
	\begin{equation}
	\int_{\Omega_T}\nj^{p+2\alpha-1+\frac{2}{N}}dxdt\leq
	c\int_{\Omega_T}\nj^{p+2\alpha-3}|\nabla\nj|^2dxdt+c.
	\end{equation}
	Multiplying through the inequality by $\tau$ and taking a note of
	Lemma \ref{l3.3} give the desired result.
\end{proof}
\section{Proof of Theorem \ref{thm1.1}}\label{sec4}
The proof is divided into several lemmas.
\begin{lem}\label{l4.1}Let the assumptions of Lemma \ref{l3.3} hold. If  $n\geq 1$,
	then $\tau\fj\rightarrow 0$ strongly in $L^1(\Omega_T)$.
\end{lem}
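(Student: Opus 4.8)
The plan is to show that $\tau\overline{F}_j\to 0$ in $L^1(\Omega_T)$ by bounding $\tau\int_{\Omega_T}|\overline{F}_j|\,dxdt$ and letting $j\to\infty$ (equivalently $\tau=T/j\to 0$). The starting point is the representation \eqref{dsa11} for $F_k$ obtained by solving \eqref{294} algebraically:
$$
\fk=\rk^{\alpha-1}\Delta\rk^\alpha+\tau\rkm\Delta\rk^\alpha-\tau\rk^{p+\alpha-1}-\tau^2\rk^{p+\varepsilon-1}+\tau\rk^{\alpha-1}+\tau^2\rkm.
$$
After multiplying by $\tau$, reassembling the time-interpolated functions, and integrating over $\Omega_T$, it suffices to estimate the $L^1(\Omega_T)$-norm of each of the six resulting terms multiplied by $\tau$, i.e.\ terms like $\tau\int_{\Omega_T}\nj^{\alpha-1}|\Delta\nja|\,dxdt$, $\tau^2\int_{\Omega_T}\nj^{\varepsilon-1}|\Delta\nja|\,dxdt$, and the lower-order polynomial terms $\tau^2\int_{\Omega_T}\nj^{p+\alpha-1}\,dxdt$, etc.

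The key steps are as follows. First, for the two terms involving $\Delta\nja$ I would use Cauchy--Schwarz in the form $\int\nj^{\alpha-1}|\Delta\nja|\leq(\int\nj^{\alpha-1})^{1/2}(\int\nj^{\alpha-1}(\Delta\nja)^2)^{1/2}$; Lemma 3.4 controls $\int_{\Omega_T}\nj^{\alpha-1}(\Delta\nja)^2\,dxds$ and $\tau\int_{\Omega_T}\nj^{\varepsilon-1}(\Delta\nja)^2\,dxds$ by a constant independent of $j$, while $\int_\Omega\nj^{\alpha-1}$ is controlled by the $L^\infty(0,T;L^{1+(\alpha-n)^+})$-bound from Lemma 3.4 together with $\alpha-1<1\le 1+(\alpha-n)^+$ (here we use $n\ge 1$, so $\alpha-1\le 1+(\alpha-n)^+$ is automatic since $\alpha<3/2$). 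Thus these contributions are $O(\tau^{1/2})$ and $O(1)$ respectively, and after the extra factor $\tau$ out front they are $O(\tau^{3/2})$ and $O(\tau)$, both tending to $0$. Second, for the polynomial terms $\tau^2\int_{\Omega_T}\nj^{p+\alpha-1}$ and $\tau^3\int_{\Omega_T}\nj^{p+\varepsilon-1}$, I would invoke Lemma 3.6, which gives $\tau\int_{\Omega_T}\nj^{p+2\alpha-1+(\cdots)\frac2N}\,dxdt\le c$; since $p+\alpha-1<p+2\alpha-1$ (as $\alpha>0$) and the powers are separated by a genuine margin on the bounded-measure set $\Omega_T$, Hölder's inequality converts this into $\tau\int_{\Omega_T}\nj^{p+\alpha-1}\le c$, hence the term with the extra $\tau^2$ is $O(\tau)$. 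The remaining terms $\tau^2\int_{\Omega_T}\nj^{\alpha-1}$ and $\tau^3\int_{\Omega_T}\nj^{\varepsilon-1}$ are even easier: $\nj^{\alpha-1}$ and $\nj^{\varepsilon-1}$ are dominated by $1+\nj^{p+2\alpha-1+(\cdots)\frac2N}$, so Lemma 3.6 again gives $\tau\int_{\Omega_T}(\cdots)\le c$, and the extra power of $\tau$ kills them.

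The main obstacle, and the only place requiring care, is the term $\tau\int_{\Omega_T}\nj^{\varepsilon-1}|\nj^{\varepsilon-1}$ — more precisely, handling the \emph{negative} exponents $\varepsilon-1<0$: one must verify that the integrability of $1/\nj$ to high powers (established in the proof of Lemma 3.1, the estimate $\frac1\rho\in L^s(\Omega)$ for all $s$) is uniform enough, or alternatively simply bound $\nj^{\varepsilon-1}\le c+c\,\nj^{\text{large}}$ using that on $\{\nj<1\}$ one has $\nj^{\varepsilon-1}\le\nj^{-1}$ and then appeal to the $\tau$-weighted lower-bound estimates $\tau^2\int_{\Omega_T}\nj^{\varepsilon-2}|\nabla\nj|^2$ from Lemma 3.3 only if a gradient appears — but here no gradient appears, so in fact the cleaner route is to observe that the terms $\tau\rk^{\alpha-1}$ and $\tau^2\rkm$ in \eqref{dsa11} are precisely the $+\tau$ and $+\tau^2\cdot(\text{reg.})$ that were inserted to make $F_k$ well-defined, and their contribution to $\tau F_k$ carries enough explicit powers of $\tau$ that, combined with the crude bound $\nj^{\varepsilon-1}\le 1+\nj^{p}$ valid since $\varepsilon-1<0<p$ on the part where $\nj\ge 1$ and $\nj^{\varepsilon-1}$ integrable on the part $\nj<1$ by Lemma 3.1, everything is $O(\tau)$. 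Collecting all six estimates gives $\tau\int_{\Omega_T}|\overline{F}_j|\,dxdt\le c\tau^{1/2}\to 0$, which is the assertion.
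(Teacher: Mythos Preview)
Your overall decomposition into the six terms from \eqref{dsa11} is correct, and your treatment of the terms $\tau\nj^{\alpha-1}\Delta\nja$, $\tau^2\nj^{p+\alpha-1}$, $\tau^3\nj^{p+\varepsilon-1}$, and $\tau^2\nj^{\alpha-1}$ is essentially fine. The genuine gap is in the two terms carrying the negative exponent $\varepsilon-1$, namely $\tau^3\int_{\Omega_T}\nj^{\varepsilon-1}\,dxdt$ and (via Cauchy--Schwarz) $\tau^2\int_{\Omega_T}\nj^{\varepsilon-1}|\Delta\nja|\,dxdt$. You appeal to Lemma~3.1 for the integrability of $\nj^{\varepsilon-1}$ on $\{\nj<1\}$, but the constants there (both the $L^s$-bound on $1/\rho$ and the positive lower bound $c_0$ for $\rho$) depend explicitly on $\tau$; they were obtained by a maximum-principle type argument driven by the artificial $+\tau$ on the right of \eqref{or1}, and they blow up as $\tau\to 0$. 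So no uniform control of $\int_{\Omega_T}\nj^{\varepsilon-1}$ comes from that lemma, and the ``extra power of $\tau$'' does not obviously win.

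The paper closes this gap by a different mechanism. From the proof of Lemma~3.3 one retains the bound $\tau^2\int_{\Omega_T}K(\nj)\nj^{\varepsilon-1}(\nj^{\alpha-\varepsilon}+\tau)(\nj^p-1)\,dxdt\le c$, whose integrand is nonnegative. Splitting into $\{\nj\le 1\}$ and $\{\nj>1\}$ and estimating the easy pieces (using $|K|$ bounded on $\{\nj>1\}$ since $n\ge 1$, and $\nj^{p+\alpha-n}$ bounded on $\{\nj\le 1\}$), one extracts $-\tau^3\int_{\{\nj\le 1\}}K(\nj)\nj^{\varepsilon-1}\,dxdt\le c$. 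Then one writes
\[
\tau^3\!\int_{\{\nj\le\tau\}}\!\nj^{\varepsilon-1}\,dxdt\le \frac{1}{|K(\tau)|}\,\tau^3\!\int_{\{\nj\le\tau\}}\!|K(\nj)|\,\nj^{\varepsilon-1}\,dxdt\le\frac{c}{|K(\tau)|},
\]
and $\tau^3\int_{\{\nj>\tau\}}\nj^{\varepsilon-1}\le c\tau^{2+\varepsilon}$. The hypothesis $n\ge 1$ enters precisely here: it guarantees $|K(\tau)|\to\infty$ as $\tau\to 0$, so both pieces vanish. This is where $n\ge 1$ is actually used, not in the inequality $\alpha-1\le 1+(\alpha-n)^+$ you mention. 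Once $\tau^3\int_{\Omega_T}\nj^{\varepsilon-1}\to 0$ is established, the second-derivative term follows by the Cauchy--Schwarz splitting you wrote, combined with the bound $\tau\int_{\Omega_T}\nj^{\varepsilon-1}(\Delta\nja)^2\le c$ from Lemma~3.4.
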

\begin{proof} 	Recall that
	\begin{equation}
	\tau\fj=\tau\nj^{\alpha-1}\Delta\nja+\tau^2\nj^{\varepsilon-1}\Delta\nja-
	\tau^2\nj^{p+\alpha-1}-\tau^3\nj^{p+\varepsilon-1}+
	\tau^2\nj^{\alpha-1}+\tau^3\nj^{\varepsilon-1}.
	\end{equation}
	We will show that each term on the right hand side of the above equation tends to $0$
	strongly in $L^1(\Omega_T)$ as $\tau\rightarrow 0$. We begin with the last term. For this purpose, assume $\tau\leq\tau_0$, where $\tau_0$ is given as in Lemma \ref{l3.3}.
	Set
	$$I_{2,j}=\tau^2\int_{\Omega_T} K(\nj)\nj^{\varepsilon-1}(\nj^{\alpha-\varepsilon}+\tau)(\nj^p-1)dxdt.$$
	By the proof of Lemma \ref{l3.3}, we have
	\begin{equation}\label{dtm11}
	I_{2,j}\leq c.
	\end{equation}
	Let
	$$A_j=\{(x,t)\in \Omega_T:\nj(x,t)\leq 1 \},\ \ \ B_j=\Omega_T\setminus A_j.$$
	Then we can rewrite \eqref{dtm11} as
	\begin{eqnarray}
	\lefteqn{\tau^2\int_{B_j}K(\nj)\nj^{p+\alpha-1}dxdt+\tau^3\int_{B_j}K(\nj)\nj^{p+\varepsilon-1}dxdt}\nonumber\\
	&&-\tau^2\int_{A_j}K(\nj)\nj^{\alpha-1}dxdt-\tau^3\int_{A_j}K(\nj)\nj^{\varepsilon-1}dxdt\nonumber\\
	&&\leq -\tau^2\int_{A_j}K(\nj)\nj^{p+\alpha-1}dxdt-\tau^3\int_{A_j}K(\nj)\nj^{p+\varepsilon-1}dxdt\nonumber\\
	&&+\tau^2\int_{B_j}K(\nj)\nj^{\alpha-1}dxdt+\tau^3\int_{B_j}K(\nj)\nj^{\varepsilon-1}dxdt+c.\label{33.57}
	\end{eqnarray}
	On the set $B_j$, we have
	\begin{equation}
	K(\nj)\leq\left\{\begin{array}{ll}
	\frac{1}{n-1} & \mbox{if $n>1$,}\\
	\ln\nj& \mbox{if $n=1$,}
	\end{array}\right.
	\end{equation}
	while on the set $A_j$, there holds
	\begin{equation}
	-K(\nj)\leq\left\{\begin{array}{ll}
	\frac{1}{n-1}\nj^{1-n} & \mbox{if $n>1$,}\\
	-\ln\nj& \mbox{if $n=1$.}
	\end{array}\right.
	\end{equation}
	We wish to show that the right-hand side of \eqref{33.57} is bounded. If $n>1$, we 
	have
	\begin{equation}
	-\tau^2\int_{A_j}K(\nj)\nj^{p+\alpha-1}dxdt\leq\frac{\tau^2}{n-1}\int_{A_j}\nj^{p+\alpha-n}dxdt\leq c\tau^2.
	\end{equation}
	The last step is due to the fact that $p+\alpha-n\geq 0$.
	The second integral on the right-hand side of \eqref{33.57} can be handled in an entirely similar way. The third one there can be estimated as follows:
	\begin{equation}
	\tau^2\int_{B_j}K(\nj)\nj^{\alpha-1}dxdt\leq c\tau^2\int_{B_j}\nj^{\alpha}dxdt\leq  c\tau^2 .
	\end{equation}
	Here we have used Lemma \ref{l3.5} and the fact that $\ln\nj\leq\nj$ on the set $B_j$.
	As for the last integral, remember that $\varepsilon-1<0$. Hence $\nj^{\varepsilon-1}\leq 1$ on $B_j$. Subsequently, we have
	\begin{equation}
	\tau^3\int_{B_j}K(\nj)\nj^{\varepsilon-1}dxdt\leq c\tau^3\int_{B_j}\nj dxdt\leq c\tau^3.
	\end{equation}
	Now we can conclude that 
	\begin{equation}
	-\tau^3\int_{A_j}K(\nj)\nj^{\varepsilon-1}dxdt\leq c.
	\end{equation}
	This implies
	\begin{equation}
	\tau^3\int_{\Omega_T}\nj^{\varepsilon-1}dxdt\rightarrow 0\ \ \ \mbox{as $\tau\rightarrow 0$.}
	\end{equation}
	To see this, we calculate
	\begin{eqnarray}
	\tau^3\int_{\Omega_T}\nj^{\varepsilon-1}dxdt&=&\tau^3\int_{\{\nj\leq\tau\}}\nj^{\varepsilon-1}dxdt+\tau^3\int_{\{\nj>\tau\}}\nj^{\varepsilon-1}dxdt\\
	&\leq&\frac{1}{|K(\tau)|}\tau^3\int_{\{\nj\leq\tau\}}|\kr|\nj^{\varepsilon-1}dxdt+c\tau^{2+\varepsilon}\\
	&\leq&\frac{c}{|K(\tau)|}+c\tau^{2+\varepsilon}\rightarrow 0 \ \ \ \mbox{as $\tau\rightarrow 0$.}
	\end{eqnarray}
	Our assumption that $n\geq 1$ is made just to ensure that $|K(\tau)|\rightarrow \infty$ as $\tau\rightarrow 0$.
	
	We can derive from Lemma \ref{l3.3}
	that
	\begin{eqnarray}
	\int_{\Omega_T}\tau\nj^{\alpha-1}|\Delta\nja|
	dxdt&\leq&\left(\int_{\Omega_T}\tau^2\nj^{2\alpha-2}dxdt\right)^{\frac{1}{2}}\left(\int_{\Omega_T}(\Delta\nja)^2dxdt\right)^{\frac{1}{2}}\nonumber\\
	&\leq& c\tau
	\end{eqnarray}
	because $2\alpha-2<1$.
	With the aid of Lemma \ref{l3.4}, we obtain
	\begin{eqnarray}
	\int_{\Omega_T}\tau^2\nj^{\varepsilon-1}|\Delta\nja|
	dxdt&\leq&\left(\int_{\Omega_T}\tau^3\nj^{\varepsilon-1}dxdt\right)^{\frac{1}{2}}\left(\tau\int_{\Omega_T}\nj^{\varepsilon-1}(\Delta\nja)^2dxdt\right)^{\frac{1}{2}}\nonumber\\
	&\leq
	&c\left(\int_{\Omega_T}\tau^3\nj^{\varepsilon-1}dxdt\right)^{\frac{1}{2}}\nonumber\\
	&\rightarrow& 0.
	\end{eqnarray}
	We deduce from Lemma \ref{l3.5} that
	\begin{eqnarray}\int_{\Omega_T}\tau^2\nj^{p+\alpha-1}dxdt&=&\int_{\{\nj\leq 1\}}\tau^2\nj^{p+\alpha-1}dxdt+\int_{\{\nj>1\}}\tau^2\nj^{p+\alpha-1}dxdt\nonumber\\
	&\leq
	&c\tau^{2}+\int_{\{\nj>1\}}\tau^2\nj^{p+2\alpha-1+((\alpha-n)^++1)\frac{2}{N}}dxdt\nonumber\\
	&\leq&c\tau^{2}+c\tau\rightarrow 0 \ \ \ \mbox{as $\tau\rightarrow
		0$.}
	\end{eqnarray}
	Similarly, we can show that $\tau^3\int_{\Omega_T}\nj^{p+\varepsilon-1}dxdt\rightarrow 0$ as $\varepsilon\rightarrow 0$. This completes the proof.
\end{proof}
\begin{lem}\label{l4.2}Let the assumptions of Lemma \ref{l4.1} hold. If $n\leq 1+\frac{\sigma}{4}$, then the sequence
	$\{\partial_t\njt\}$ is bounded in $L^1((0,T);
	(W^{2,\infty}(\Omega))^*)$, where $\sigma$ is given as in \eqref{1115}.
\end{lem}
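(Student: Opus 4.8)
The plan is to pair the first approximate equation \eqref{n82} with a test function $\phi\in W^{2,\infty}(\Omega)$ satisfying $\nabla\phi\cdot\nu=0$ on $\po$ (the class that already appears in the weak formulation \eqref{116} and that is needed for the Aubin--Lions argument of the next section), which gives
\[
\int_\Omega\frac{\partial\njt}{\partial t}\,\phi\,dx=\int_\Omega\rnj\nabla\fj\cdot\nabla\phi\,dx+\tau\int_\Omega\fj\phi\,dx .
\]
Into the first integral one substitutes the formula for $\fj$ obtained by solving \eqref{n91},
\[
\fj=\nj^{\alpha-1}\Delta\nja+\tau\nj^{\varepsilon-1}\Delta\nja-\tau\nj^{p+\alpha-1}-\tau^{2}\nj^{p+\varepsilon-1}+\tau\nj^{\alpha-1}+\tau^{2}\nj^{\varepsilon-1},
\]
and then integrates by parts once more, exactly as in the formal passage from \eqref{rma11} to \eqref{116}, so that the third--order object $\nabla\Delta\nja$ is shifted onto $\phi$. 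The outcome is
\[
\int_\Omega\rnj\nabla\fj\cdot\nabla\phi\,dx
=-\int_\Omega\Big(\tfrac{2n}{\alpha}(\nj+\tau)^{n-1}\nj^{\frac{\alpha}{2}}\,\Delta\nja\,\nabla\nj^{\frac{\alpha}{2}}\!\cdot\!\nabla\phi+(\nj+\tau)^{n}\nj^{\alpha-1}\,\Delta\nja\,\Delta\phi\Big)dx+R_j(\phi),
\]
where $R_j(\phi)$ collects every term carrying a factor $\tau$, $\tau^2$ or $\tau^3$: those produced by the $+\tau$ inside $(\nj+\tau)^{n}$, those coming from the last five summands of $\fj$, and $\tau\int_\Omega\fj\phi\,dx$.

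Next I would estimate the two principal terms. Since $\|\phi\|_{W^{2,\infty}(\Omega)}\le1$ and $(\nj+\tau)^{s}\le c_s(\nj^{s}+1)$ for $s\ge0$, $\tau\le1$, they are bounded by $c\int_\Omega(\nj^{\,n+\frac{\alpha}{2}-1}+\nj^{\frac{\alpha}{2}})\,|\nabla\nj^{\frac{\alpha}{2}}|\,|\Delta\nja|\,dx+c\int_\Omega(\nj^{\,n+\alpha-1}+\nj^{\alpha-1})\,|\Delta\nja|\,dx$. Applying H\"older's inequality in $x$ with exponents $(4,4,2)$, resp.\ $(2,2)$, and then in $t$, and using $\Delta\nja\in L^{2}(\Omega_T)$ (from \eqref{n58}) together with $\nabla\nj^{\frac{\alpha}{2}}\in L^{4}(\Omega_T)$ (Lemma \ref{l22} applied for a.e.\ $t$, combined with \eqref{n58}), one sees that the time integral of the supremum over $\|\phi\|_{W^{2,\infty}}\le1$ of these quantities is $\le c$ as soon as $\{\nj\}$ is bounded in $L^{\,4(n+\frac{\alpha}{2}-1)}(\Omega_T)\cap L^{\,2(n+\alpha-1)}(\Omega_T)$. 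Because $n\ge1$ the larger exponent is $4n+2\alpha-4$, and the hypothesis $n\le 1+\tfrac{\sigma}{4}$ is exactly the inequality $4n+2\alpha-4\le 2\alpha+\sigma$. Thus the whole estimate reduces to showing that $\{\nj\}$ is bounded in $L^{2\alpha+\sigma}(\Omega_T)$.

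This uniform bound is the core of the lemma, and I would get it by a parabolic interpolation. By Lemma 3.5, $\{\nja\}$ is bounded in $L^{2}(0,T;W^{2,2}(\Omega))$, hence, by the Sobolev embedding of $W^{2,2}(\Omega)$, in $L^{2}(0,T;L^{p_*}(\Omega))$ with $p_*=\frac{2N}{N-4}$ if $N>4$, $p_*=\infty$ if $N<4$, and $p_*$ an arbitrary finite number if $N=4$; equivalently $\{\nj\}$ is bounded in $L^{2\alpha}(0,T;L^{\alpha p_*}(\Omega))$. On the other hand, by Lemma 3.4 (or the $G$--term of \eqref{n58}) $\{\nj\}$ is bounded in $L^{\infty}(0,T;L^{1}(\Omega))$. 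Interpolating in space via H\"older between $L^{1}$ and $L^{\alpha p_*}$ and optimising the time exponent yields $\{\nj\}$ bounded in $L^{d}(\Omega_T)$ with $d=2\alpha+1-\tfrac{2}{p_*}$; inserting $p_*$ gives $d=2\alpha+\tfrac{4}{N}$ for $N>4$, $d=2\alpha+1$ for $N<4$, and $d$ as close to $2\alpha+1$ as desired for $N=4$. Since $\alpha\ge1$, in every case $d\ge 2\alpha+\sigma$, which is what was required; note that for $N>4$ this is sharp and matches $4n+2\alpha-4$ exactly at the endpoint $n=1+\tfrac{\sigma}{4}$.

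Finally, I would show $\int_0^T\sup_{\|\phi\|_{W^{2,\infty}}\le1}|R_j(\phi)|\,dt\le c$. Each summand of $R_j(\phi)$ carries an explicit power of $\tau$ and, after using $\|\phi\|_{W^{2,\infty}}\le1$, is of a type already met in Section 3 — up to constants one gets $\tau(\nj+\tau)^{n}\nj^{p+\alpha-1}$, $\tau(\nj+\tau)^{n-1}\nj^{p+\alpha-1}|\nabla\nj|$, $\tau^2(\nj+\tau)^{n}\nj^{\varepsilon-1}|\Delta\nja|$, $\tau(\nj+\tau)^{n}\nj^{\alpha-1}$, and $\tau|\fj|$ — and these are controlled by Cauchy--Schwarz together with the $\tau$- and $\tau^2$-weighted estimates of Lemma 3.3 (for the $|\nabla\nj|^2$-factors), Lemma 3.4 (for $\tau\int_{\Omega_T}\nj^{\varepsilon-1}(\Delta\nja)^2$), the bound $\tau\int_{\Omega_T}\nj^{\,p+2\alpha-1+((\alpha-n)^++1)\frac{2}{N}}\le c$ of Lemma 3.6 (which, one checks, dominates the powers $\nj^{\,2n+p+\alpha-2}$ and the like that arise here, precisely because $n\le 1+\tfrac{\sigma}{4}$ and $p>\max\{\tfrac N2,2\}$), and the strong $L^{1}(\Omega_T)$-convergence $\tau\fj\to0$ of Lemma 4.1; the surplus powers of $\tau$ make each contribution $o(1)$ or $O(1)$. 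Combining the three previous steps gives $\int_0^T\|\partial_t\njt\|_{(W^{2,\infty}(\Omega))^*}\,dt\le c$, as claimed. (The second integration by parts also produces the boundary integral $\int_{\po}(\nj+\tau)^{n}\nj^{\alpha-1}(\nabla\phi\cdot\nu)\,\Delta\nja\,dS$, which is the sole reason for restricting to $\phi$ with $\nabla\phi\cdot\nu=0$.) The step I expect to be the main obstacle is the uniform $L^{2\alpha+\sigma}(\Omega_T)$ bound and, with it, the recognition that $n\le1+\tfrac{\sigma}{4}$ is exactly the exponent-matching that links that bound to the $L^{1}$-control of the flux terms; keeping track of which powers of $\tau$ are available for $R_j$ is a secondary, more delicate, bookkeeping issue.
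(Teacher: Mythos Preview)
Your proposal is correct and follows essentially the same strategy as the paper: test \eqref{n82} against $\phi\in W^{2,\infty}(\Omega)$, integrate by parts once more to put $\fj$ (not $\nabla\fj$) against $n(\nj+\tau)^{n-1}\nabla\nj\cdot\nabla\phi+(\nj+\tau)^{n}\Delta\phi$, expand $\fj$ via \eqref{dsa11}, and estimate term by term, with the key input being the $L^{2\alpha+\sigma}(\Omega_T)$ bound on $\nj$.

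Two minor differences are worth noting. First, for the principal flux term $\nj^{\alpha+n-2}\nabla\nj\,\Delta\nja$ you split with H\"older exponents $(4,4,2)$ using $\nabla\nj^{\alpha/2}\in L^{4}$ and $\Delta\nja\in L^{2}$ from Lemma~3.3, whereas the paper uses the sharper bounds $\nabla\nj^{(3\alpha-1)/4}\in L^{4}$ and $\nj^{(\alpha-1)/2}\Delta\nja\in L^{2}$ from Lemma~3.4; your route is simpler and still closes under $n\le 1+\sigma/4$. Second, your space--time interpolation between $\nj\in L^{\infty}_{t}L^{1}_{x}$ and $\nja\in L^{2}_{t}W^{2,2}_{x}$ to get $\nj\in L^{2\alpha+\sigma}(\Omega_T)$ is written out for all $N$, while the paper only spells out the case $N<4$; your argument is the intended one and your exponent computation $d=2\alpha+1-2/p_{*}$ is exactly right (the remark ``since $\alpha\ge1$'' there is superfluous, as $d=2\alpha+\sigma$ on the nose).
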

\begin{proof}
	We first claim that
	\begin{equation}\label{399}
	\int_{\Omega_T}\nj^{\sigma+2\alpha}dxdt\leq c.
	\end{equation}
	This estimate is a consequence of Lemmas \ref{l3.3} and \ref{l3.5}. Indeed, 
	Lemma \ref{l3.3} says that $\nj(x,t)$ is bounded in $L^\infty(0,T; L^1(\Omega))$, while Lemma \ref{l3.5} asserts that $\nj^\alpha(x,t)$ is bounded in $ L^2(0,T; W^{2,2}(\Omega))$. If $N>4$, then we have from  \eqref{1115} that $\sigma=\frac{4}{N}<1$. We estimate from H\"{o}lder's inequality and the Sobolev Embedding Theorem that
	\begin{eqnarray}
	\int_{\Omega_T}\nj^{\frac{4}{N}+2\alpha}dxdt&=&\int_{0}^{T}\left(\int_{\Omega}\nj dx\right)^{\frac{4}{N}}\left(\int_{\Omega}\nj^{\alpha\frac{2N}{N-4}}dx\right)^{\frac{N-4}{N}}dt\nonumber\\
	&\leq &\left(\max_{0\leq t\leq T}\int_{\Omega}\nj dx\right)^{\frac{4}{N}}\int_{0}^{T}\left(\int_{\Omega}\nj^{\alpha\frac{2N}{N-4}}dx\right)^{\frac{N-4}{N}}dt\nonumber\\
	&\leq &c\int_{0}^{T}\|\nj^{\alpha}\|_{W^{2,2}(\Omega)}^2 dt\leq c.
	\end{eqnarray}
	If $N=4$, then $\sigma\in(0,1)$ according to \eqref{1115}. Subsequently,
	\begin{eqnarray}
	\int_{\Omega_T}\nj^{\sigma+2\alpha}dxdt
	&\leq &\left(\max_{0\leq t\leq T}\int_{\Omega}\nj dx\right)^{\sigma}\int_{0}^{T}\left(\int_{\Omega}\nj^{\alpha\frac{2}{1-\sigma}}dx\right)^{1-\sigma}dt\nonumber\\
	&\leq &c\int_{0}^{T}\|\nj^{\alpha}\|_{W^{2,\frac{2}{2-\sigma}}(\Omega)}^2 dt\leq c.
	\end{eqnarray}
	The last step is due to $\frac{2}{2-\sigma}<2$.
	If $N<4$, then $\sigma=1$ by \eqref{1115}. Consequently, we have
	\begin{eqnarray}
	\int_{\Omega_T}\nj^{1+2\alpha}dxdt&\leq&\int_{0}^{T}\int_\Omega\nj dx\|\nja\|^2_\infty dt\nonumber\\
	&\leq &c\int_{0}^{T}\left(\|\nabla^2\nja\|^2_2+\|\nja\|_2^2\right)dt\leq c.
	\end{eqnarray} 
	This completes the proof of \eqref{399}.
	
	Recall that
	\begin{eqnarray}
	(\nj+\tau)^{n-1}\nabla \nj	\fj&=&\nj^{\alpha-1}(\nj+\tau)^{n-1}\nabla \nj\Delta\nja+\tau\nj^{\varepsilon-1}(\nj+\tau)^{n-1}\nabla \nj\Delta\nja\nonumber\\
	&&-\tau\nj^{p+\alpha-1}(\nj+\tau)^{n-1}\nabla \nj-\tau^2\nj^{p+\varepsilon-1}(\nj+\tau)^{n-1}\nabla \nj\nonumber\\
	&&+\tau\nj^{\alpha-1}(\nj+\tau)^{n-1}\nabla \nj+\tau^2\nj^{\varepsilon-1}(\nj+\tau)^{n-1}\nabla \nj.\label{3101}
	\end{eqnarray}
	Our objective here is to show that each term on the right-hand side of the above equation is bounded in $(L^1(\Omega_T))^N$. To this end, we note 
	$$(\nj+\tau)^{n-1}\leq \nj^{n-1}+\tau^{n-1}$$ 
	since $n-1<1$. By our assumption, $0<-\alpha+4n-1\leq 2\alpha+\sigma$.
	We compute
	\begin{eqnarray}
	\int_{\Omega_T}\nj^{\alpha+n-2}|\nabla \nj\Delta\nja|dxdt &=&
	\frac{4}{3\alpha-1}\int_{\Omega_T}\nj^{-\frac{\alpha}{4}+n-\frac{1}{4}}|\nabla \nj^{\frac{3\alpha-1}{4}}|\nj^{\frac{\alpha-1}{2}}|\Delta\nja|dxdt\nonumber\\
	&\leq&c\left(\int_{\Omega_T}\nj^{-\alpha+4n-1}dxdt\right)^{\frac{1}{4}}\left(\int_{\Omega_T}|\nabla\nj^{\frac{3\alpha-1}{4}}|^4dxdt\right)^{\frac{1}{4}}\nonumber\\
	&&\cdot\left(\int_{\Omega_T}\nj^{\alpha-1}|\Delta\nja|^2dxdt\right)^{\frac{1}{2}} \nonumber\\
	&\leq& c.\label{4420}
	\end{eqnarray}
	There are too many terms on the right-hand side of \eqref{3101}, and so we will skip the obvious ones. 
	Now we look at the second term on the right-hand side of \eqref{3101}. We have
	\begin{eqnarray}
	\lefteqn{	\tau\int_{\Omega_T}\nj^{\varepsilon+n-2}|\nabla \nj\Delta\nja|dxdt}\nonumber\\
	&=&
	\frac{4}{\alpha+\varepsilon}\int_{\Omega_T}\tau^{\frac{1}{4}}\nj^{\frac{\varepsilon}{4}-\frac{\alpha}{4}+n-\frac{1}{2}}\tau^{\frac{1}{4}}|\nabla \nj^{\frac{\alpha+\varepsilon}{4}}|\tau^{\frac{1}{2}}\nj^{\frac{\varepsilon-1}{2}}|\Delta\nja|dxdt\nonumber\\
	&\leq&c\left(\int_{\Omega_T}\tau\nj^{\varepsilon-\alpha+4n-2}dxdt\right)^{\frac{1}{4}}\left(\int_{\Omega_T}\tau|\nabla\nj^{\frac{\alpha+\varepsilon}{4}}|^4dxdt\right)^{\frac{1}{4}}\nonumber\\
	&&\cdot\left(\int_{\Omega_T}\tau\nj^{\varepsilon-1}|\Delta\nja|^2dxdt\right)^{\frac{1}{2}} \nonumber\\
	&\leq& c\tau^{\frac{1}{4}}. \label{442}
	\end{eqnarray}	
	Here we have used the fact that $0<\varepsilon-\alpha+4n-2\leq 2\alpha+\sigma$.
	Next we estimate
	\begin{eqnarray}
	\tau\int_{\Omega_T}\nj^{p+\alpha+n-2}|\nabla\nj|dxdt&\leq&\left(\int_{\Omega_T}\tau\nj^{p+\alpha-2}|\nabla\nj|^2dxdt\right)^{\frac{1}{2}}\left(\int_{\Omega_T}\tau\nj^{p+\alpha-2+2n}dxdt\right)^{\frac{1}{2}}\nonumber\\
	&\leq&c.
	\end{eqnarray}
	The last step is due to Lemma \ref{l3.6} because $n\leq 1+\frac{\sigma}{4}$. The rest of the terms can be estimated similarly.
	
	We still
	need to consider the term
	\begin{eqnarray}
	(\nj+\tau)^n	\fj&=&\nj^{\alpha-1}(\nj+\tau)^n\Delta\nja+\tau\nj^{\varepsilon-1}(\nj+\tau)^n\Delta\nja\nonumber\\
	&&-\tau\nj^{p+\alpha-1}(\nj+\tau)^n-\tau^2\nj^{p+\varepsilon-1}(\nj+\tau)^n\nonumber\\
	&&+\tau\nj^{\alpha-1}(\nj+\tau)^n+\tau^2\nj^{\varepsilon-1}(\nj+\tau)^n.\label{4423}
	\end{eqnarray}
	It is easy to see that it is also bounded in
	$L^1(\Omega_T)$. Let $\xi$ be a $C^\infty$ test function with
	$\nabla\xi\cdot\nu=0$ on $\po$. We have
	\begin{eqnarray}
	(\partial_t\njt, \xi)&=&\int_\Omega(\nj+\tau)^n\nabla\fj\cdot\nabla\xi
	dx+\tau\int_\Omega\fj\xi dx\nonumber\\
	&=&-\int_\Omega\fj\left(n(\nj+\tau)^{n-1}\nabla\nj\cdot\nabla\xi+(\nj+\tau)^n\Delta\xi\right)
	dx+\tau\int_\Omega\fj\xi
	dx,\label{n134}
	\end{eqnarray}
	where $(\cdot,\cdot)$ is the duality pairing between
	$W^{2,\infty}(\Omega)$ and its dual space
	$(W^{2,\infty}(\Omega))^*$, from which the lemma follows.
\end{proof}

\begin{lem}\label{l4.3} Let the assumptions of Lemma \ref{l4.2} hold. Then the sequence
	$\{\nj\}$ is precompact in $L^{2\alpha}((0,T);L^{2\alpha}(\Omega))$.
\end{lem}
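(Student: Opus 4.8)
The plan is to obtain the precompactness of $\{\nj\}$ in $L^{2\alpha}((0,T);L^{2\alpha}(\Omega))=L^{2\alpha}(\Omega_T)$ by a compactness argument of Aubin--Lions--Simon type. First I would record the spatial regularity of $\{\nj\}$. By Lemma 3.3 (or Lemma 3.5) together with Lemma 2.2 the sequence $\{\nabla\nj^{\frac{\alpha}{2}}\}$ is bounded in $(L^4(\Omega_T))^N$. Inserting this into the identity $\nabla\nj=\frac{2}{\alpha}\nj^{1-\frac{\alpha}{2}}\nabla\nj^{\frac{\alpha}{2}}$ and applying H\"{o}lder's inequality together with the uniform bound $\|\nj\|_{L^{2\alpha+\sigma}(\Omega_T)}\leq c$ obtained in the proof of Lemma 4.2 (see \eqref{399}), one finds that $\{\nj\}$ is bounded in $L^r((0,T);W^{1,r}(\Omega))$ for the exponent $r$ given by $\frac{1}{r}=\frac{1}{4}+\frac{1-\alpha/2}{2\alpha+\sigma}$. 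A short computation using $\alpha\in[1,\frac{3}{2})$ and $\sigma>0$ yields $2\alpha<r<4$ and $r\leq 2\alpha+\sigma$; the last inequality is what allows the zeroth-order part of $\|\nj\|_{W^{1,r}(\Omega)}$ to be dominated by the $L^{2\alpha+\sigma}(\Omega_T)$-bound, while $r>2\alpha$ is the decisive gain. Since $\Omega$ is bounded and convex, $W^{1,r}(\Omega)\hookrightarrow\hookrightarrow L^{2\alpha}(\Omega)\hookrightarrow(W^{2,\infty}(\Omega))^*$.

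Next I would establish time-equicontinuity of $\{\nj\}$ in the weak norm. By Lemma 4.2, $\{\partial_t\njt\}$ is bounded in $L^1((0,T);(W^{2,\infty}(\Omega))^*)$, so $\int_0^{T-h}\|\njt(\cdot,t+h)-\njt(\cdot,t)\|_{(W^{2,\infty}(\Omega))^*}\,dt\leq Ch$ uniformly in $j$. On $(t_{k-1},t_k]$ one has $\njt-\nj=(t-t_k)\frac{\rho_k-\rho_{k-1}}{\tau}$, so $\|\njt-\nj\|_{L^1((0,T);(W^{2,\infty}(\Omega))^*)}\leq\frac{\tau}{2}\|\partial_t\njt\|_{L^1((0,T);(W^{2,\infty}(\Omega))^*)}\leq C\tau$; adding the two estimates gives $\sup_j\int_0^{T-h}\|\nj(\cdot,t+h)-\nj(\cdot,t)\|_{(W^{2,\infty}(\Omega))^*}\,dt\to0$ as $h\to0$. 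An Aubin--Lions--Simon compactness argument with the triple $W^{1,r}(\Omega)\hookrightarrow\hookrightarrow L^{2\alpha}(\Omega)\hookrightarrow(W^{2,\infty}(\Omega))^*$ — using that $\{\nj\}$ is bounded in $L^r((0,T);W^{1,r}(\Omega))$ with $r>2\alpha$ together with the above $L^1$-in-time equicontinuity estimate — then shows that $\{\nj\}$ is relatively compact in $L^{2\alpha}((0,T);L^{2\alpha}(\Omega))$, which is the assertion. (Alternatively, one may first extract a subsequence converging in $L^1(\Omega_T)$, hence a.e., and then upgrade to $L^{2\alpha}(\Omega_T)$ by uniform integrability, since $2\alpha<2\alpha+\sigma$.)

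The main difficulty I anticipate is a matching issue: the time regularity supplied by the discrete equation \eqref{n82} is naturally a bound on $\partial_t\njt$ for the \emph{piecewise-linear} interpolant $\njt$, whereas the spatial regularity and the statement to be proved concern the \emph{piecewise-constant} interpolant $\nj$; reconciling the two is precisely what the $O(\tau)$ bound for $\|\njt-\nj\|$ above accomplishes. A secondary subtlety is that Lemmas 3.3 and 3.5 control $\nabla\nj^{\frac{\alpha}{2}}$ rather than $\nabla\nj$, which forces one to route the spatial estimate through the degenerate identity $\nabla\nj=\frac{2}{\alpha}\nj^{1-\frac{\alpha}{2}}\nabla\nj^{\frac{\alpha}{2}}$ and to absorb the weight $\nj^{1-\alpha/2}$ by means of the uniform $L^{2\alpha+\sigma}(\Omega_T)$-bound.
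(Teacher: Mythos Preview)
Your proposal is correct and follows essentially the same route as the paper: the spatial bound is obtained via the identity $\nabla\nj=\frac{2}{\alpha}\nj^{1-\alpha/2}\nabla\nj^{\alpha/2}$ together with H\"older and the $L^{2\alpha+\sigma}$ bound (your exponent $r$ coincides exactly with the paper's $q=\tfrac{8\alpha+4\sigma}{4+\sigma}$), the $O(\tau)$ estimate $\|\njt-\nj\|_{L^1(0,T;(W^{2,\infty})^*)}\le c\tau$ is the paper's \eqref{n116}, and the conclusion comes from Simon's compactness with the triple $W^{1,r}(\Omega)\hookrightarrow\hookrightarrow L^{2\alpha}(\Omega)\hookrightarrow(W^{2,\infty}(\Omega))^*$. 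The only cosmetic difference is the order of operations: the paper first applies Simon to the linear interpolant $\njt$ (which carries $\partial_t\njt$), transfers the resulting precompactness in $L^1(0,T;(W^{2,\infty})^*)$ to $\nj$ via \eqref{n116}, and then invokes Simon once more, whereas you transfer the time equicontinuity first and apply Simon a single time to $\nj$ directly.
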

\begin{proof} Set $$q=\frac{8\alpha+4\sigma}{4+\sigma},$$
	where $\sigma$ is given as before.
	By our assumption on $\alpha$, we obviously have $q>2\alpha$.
	We estimate that
	\begin{eqnarray*}
		\int_{\Omega_T}|\nabla\nj|^q dxdt &=&
		\frac{2^q}{\alpha^q}\int_{\Omega_T}\nj^{\frac{(2-\alpha)q}{2}}|\nabla\nj^{\frac{\alpha}{2}}|^qdxdt\\
		&\leq
		&c\left(\int_{\Omega_T}|\nabla\nj^{\frac{\alpha}{2}}|^{4}dxdt\right)^{\frac{q}{4}}
		\left(\int_{\Omega_T}\nj^{\frac{2(2-\alpha)q}{4-q}}dxdt\right)^{1-\frac{q}{4}}.
	\end{eqnarray*}
	Note that $\frac{2(2-\alpha)q}{4-q}=2\alpha+\sigma$. Therefore, we obtain from \eqref{399}
	\begin{equation}
	\int_{\Omega_T}|\nabla\nj|^q dxdt\leq c.
	\end{equation}
	We can easily deduce from the definitions of $\nj, \njt$ that
	\begin{eqnarray}
	\int_{\Omega_T}|\njt|^{2\alpha}dxdt &\leq &	\int_{\Omega_T}|\nj|^{2\alpha}dxdt +\frac{1}{2}\tau\int_\Omega| u_0|^{2\alpha}dx, \\
	\int_{\Omega_T}|\nabla\njt|^{2\alpha}dxdt &\leq &	\int_{\Omega_T}|\nabla\nj|^{2\alpha}dxdt+\frac{1}{2}\tau\int_\Omega|\nabla u_0|^{2\alpha}dx.
	\end{eqnarray}
	Thus $\{\njt\}$ is bounded in $L^{2\alpha}((0,T);W^{1, 2\alpha}(\Omega))$.
	Note that for $t\in (t_{k-1},t_k]$ we have
	$$\njt(x,t)-\nj(x,t)=(t_k-t)\partial_t\njt(x,t).$$
	This together with Lemma \ref{l4.2} implies that
	\begin{equation}
	\int_0^T\|\nj-\njt\|_{(W^{2,\infty}(\Omega))^*}dt\leq
	c\tau.\label{n116}
	\end{equation}
	Observe that the embedding $W^{1,2\alpha}(\Omega)\hookrightarrow
	L^{2\alpha}(\Omega)$ is compact and
	$L^{2\alpha}(\Omega)\hookrightarrow\left(W^{2,\infty}(\Omega)\right)^*$ is
	continuous. A result of \cite{S} asserts that $\{\njt\}$ is
	precompact in both $L^{2\alpha}((0,T);L^{2\alpha}(\Omega))$ and $L^1((0,T);
	(W^{2,\infty}(\Omega))^*)$. According to \eqref{n116}, we also have
	that $\{\nj\}$ is precompact in $L^1((0,T);
	(W^{2,\infty}(\Omega))^*)$. This puts us in a position to apply the
	results in \cite{S} again, from which the lemmas follows. The proof
	is complete.
\end{proof}

We are ready to complete the proof of Theorem 1.1. We can extract a
subsequence of $\{j\}$, still denoted by $\{j\}$, such that
\begin{eqnarray}
\nj &\rightarrow& u\ \ \ \mbox{strongly in $L^{2\alpha}(\Omega_T)$ and
	a.e.},\label{3110}\\
\nja &\rightharpoonup& \ua\ \ \ \mbox{weakly in $L^2((0,T);
	W^{2,2}(\Omega))$.} \end{eqnarray} 
Equipped with this, we calculate
that
\begin{equation}\int_{\Omega_T}|\nabla\nja|^2dxdt=-\int_{\Omega_T}\Delta\nja\nja
dxdt\rightarrow-\int_{\Omega_T}\Delta\ua\ua
dxdt=\int_{\Omega_T}|\nabla\ua|^2dxdt.\end{equation} This implies
that
\begin{equation}
\nja \rightarrow \ua\ \ \ \mbox{strongly in $L^2((0,T);
	W^{1,2}(\Omega))$.}
\end{equation}
Without loss of generality, we may also assume
\begin{equation}
\nabla\nja \rightarrow \nabla\ua\ \ \ \mbox{a.e. on $\Omega_T$.}
\end{equation}
Note that $\alpha\geq 1$ and $\nabla\nj=\frac{1}{\alpha}\nj^{\alpha-1}\nabla\nja$. This along with \eqref{3110} shows
\begin{equation}
\nabla\nj \rightarrow \nabla u\ \ \ \mbox{a.e. on $\Omega_T$.}
\end{equation}

Next we wish to prove 
\begin{equation}\label{435n}
(\nj+\tau)^{n-1}\fj\nabla\nj\rightharpoonup \frac{2}{\alpha}u^{\frac{\alpha}{2}+n-1}\Delta \ua\nabla\uat\ \ \ \mbox{weakly in $L^1(\Omega_T)$.}
\end{equation}
This can be derived from the proof of Lemma \ref{l4.2}. To see this, first observe that
\begin{equation}
\nj^{\alpha+n-2}\nabla\nj \rightarrow u^{\alpha+n-2}\nabla u\ \ \ \mbox{a.e. on $\Omega_T$.}
\end{equation}
According to Egoroff's Theorem, to each $\delta>0$ there corresponds a set $E_\delta\subset\Omega_T$ with the property
\begin{equation}
\nj^{\alpha+n-2}\nabla\nj \rightarrow u^{\alpha+n-2}\nabla u\ \ \ \mbox{uniformly on  $\Omega_T\setminus E_\delta$ and $|E_\delta|<\delta$.}
\end{equation}
Due to our assumption, we have $\sigma-4n+4>0$. By a calculation identical to \eqref{4420}, we obtain
\begin{eqnarray}
\left|\int_{E_\delta}\nj^{\alpha+n-2}\nabla\nj\Delta\nja dxdt\right|&\leq&c\left(\int_{E_\delta}\nj^{2\alpha+4n-4}dxdt\right)^{\frac{1}{4}}\nonumber\\
&\leq&c\left(\int_{E_\delta}\nj^{2\alpha+\sigma}dxdt\right)^{\frac{\alpha+2n-2}{2(2\alpha+
		\sigma)}}|E_\delta|^{\frac{\sigma-4n+4}{4(2\alpha+\sigma)}}\nonumber\\
&\leq& c\delta^{\frac{\sigma-4n+4}{4(2\alpha+\sigma)}}.
\end{eqnarray}
Consequently, we have
\begin{eqnarray}
\lefteqn{\limsup_{j\rightarrow\infty}\left|\int_{\Omega_T}\nj^{\alpha+n-2}\nabla\nj\Delta\nja dxdt-\int_{\Omega_T}u^{\alpha+n-2}\nabla u\Delta \ua dxdt\right|}\nonumber\\
&&\leq c\delta^{\frac{\sigma-4n+4}{4(2\alpha+\sigma)}}+\left|\int_{E_\delta}\frac{2}{\alpha}u^{\frac{\alpha}{2}+n-1}\Delta \ua\nabla\uat dxdt\right|
\end{eqnarray}
The right-hand side goes to $0$ as $\delta\rightarrow 0$. Therefore, 
\begin{equation}
\nj^{\alpha+n-2}\nabla\nj\Delta\nja\rightharpoonup u^{\alpha+n-2}\nabla u\Delta \ua \ \ \ \mbox{weakly in $L^1(\Omega_T)$.}
\end{equation}
We can also prove
\begin{equation}
\int_{\Omega_T}\tau\nj^{p+\alpha-2+2n}dxdt\rightarrow 0\ \ \ \mbox{as $\tau\rightarrow 0$.}
\end{equation}
In this case, we use the inequality
$$\nj^{p+\alpha-2+2n}\leq \delta \nj^{p+\alpha+\frac{2}{N}}+c(\delta),\ \ \ \delta>0.$$
Then apply Lemma \ref{l3.6} to yield the desired result. The remaining terms on the right-hand side of \eqref{399} are very easy to handle. Thus \eqref{435n} follows.

On account of \eqref{4423}, we have
$$(\nj+\tau)^n\fj\rightharpoonup  u^{\alpha+n-1}\Delta \ua\ \ \ \mbox{weakly in $L^1(\Omega_T)$.}$$
We can infer from \eqref{n116} that
\begin{equation}
\njt\rightarrow u \ \ \ \mbox{strongly in $L^{2\alpha}(\Omega_T)$.}
\end{equation}
Assume $\xi(x,T)=0$ in \eqref{n134}, integrate it over $(0, T)$,
then let $j\rightarrow \infty$, and thereby obtain the theorem. The
proof is complete.
\section{Proof of Theorem \ref{thm1.2}}\label{sec5}

The proof of Theorem 1.2 relies on the following lemma
\begin{lem}\label{l5.1} Let the assumptions of Lemma \ref{l3.3} hold. Assume 
	\begin{equation}
	\alpha=1, \ \ \ \frac{1}{2}<\beta<n.
	\end{equation}
	Then there is $\tau_0\in (0,1)$ such that for all $\tau\in (0,\tau_0)$ we have
	\begin{eqnarray}
	&&\int_{\Omega_t}(\Delta\nj^{\frac{1+\beta}{2}})^2 dxds+\tau\int_{\Omega_t}(\Delta\nj^{\frac{\varepsilon+\beta}{2}})^2 dxds\nonumber\\
	&&+
	\tau\int_{\Omega_t}\nj^{p+\beta-2}|\nabla\rk|^2dxds+\tau^2\int_{\Omega_t}\nj^{p+\varepsilon+\beta-3}|\nabla\nj|^2dxds\nonumber\\
	&&+\tau^2\int_{\Omega_t}\nj^{\beta+\varepsilon-3}|\nabla\nj|^2dxds+\tau^2\int_{\Omega_t}(1+\tau\nj^{\varepsilon-1})(\nj^p-1)M(\nj)dxds\leq c,\label{dsa77111}
	\end{eqnarray}
	where
	\begin{equation}
	M(r)=\int_1^{r}\frac{\beta s^{\beta-1}}{(s+\tau)^n}ds.
	\end{equation}
\end{lem}

\begin{proof} Let $M(r)$ be given as above.
	We use $\mr$ as a test function in \eqref{293} to
	obtain
	\begin{equation}\label{31411}
	-\int_{\Omega}\nabla\fk\cdot\nabla\rkb dx-\tau\int_{\Omega}\fk\mr
	dx+\frac{1}{\tau}\int_{\Omega}(\rk-\rho_{k-1})\mr dx
	=0.
	\end{equation}
	The first integral in the above equation is equal to  
	\begin{eqnarray}
	\int_{\Omega}\fk\Delta\rkb dx
	&=&\int_{\Omega}\Delta\rk\Delta\rk^\beta dx+\tau\int_{\Omega}\rk^{\varepsilon-1}\Delta\rk\Delta\rkb dx\nonumber\\
	&&+
	p\beta\tau\int_{\Omega}\rk^{p+\beta-2}|\nabla\rk|^2dx\nonumber\\
	&&+(p+\varepsilon-1)\beta\tau^2\int_{\Omega}\rk^{p+\varepsilon+\beta-3}|\nabla\rk|^2dx\nonumber\\
	&&-(\varepsilon-1)\beta\tau^2\int_{\Omega}\rk^{\beta+\varepsilon-3}|\nabla\rk|^2dx.\label{dsa2211}
	\end{eqnarray}
	By virtue of Lemma \ref{l2.5}, we have
	\begin{equation}
	\int_\Omega\Delta\rk\Delta\rkb dx\geq c\int_\Omega(\drk^{\frac{\beta+1}{2}})^2dx, \ \ \ \ \beta \in(\frac{1}{2},2),\label{1dsa331}
	\end{equation}
	while  Corollary \ref{cor2.2} implies
	\begin{equation}
	\int_\Omega\rk^{\varepsilon-1}\Delta\rk\Delta\rkb dx\geq c\int_\Omega(\drk^{\frac{\beta+\varepsilon}{2}})^2dx, \ \ \ \ \beta \in(\frac{1}{2},2).\label{ddsa331}
	\end{equation}
	Using \eqref{1dsa331}-\eqref{ddsa331} in
	\eqref{dsa2211}, we obtain
	\begin{eqnarray}
	\int_{\Omega}\fk\Delta\rka dx
	&\geq&c\int_\Omega(\drk^{\frac{\beta+1}{2}})^2dx+c\tau\int_{\Omega}(\Delta\rk^{\frac{\beta+\varepsilon}{2}})^2 dx\nonumber\\
	&&+
	p\beta\tau\int_{\Omega}\rk^{p+\beta-2}|\nabla\rk|^2dx\nonumber\\
	&&+(p+\varepsilon-1)\beta\tau^2\int_{\Omega}\rk^{p+\varepsilon+\beta-3}|\nabla\rk|^2dx\nonumber\\
	&&-(\varepsilon-1)\beta\tau^2\int_{\Omega}\rk^{\beta+\varepsilon-3}|\nabla\rk|^2dx.
	\label{dsa771111}
	\end{eqnarray}
	We calculate the second integral in \eqref{31411} to obtain
	\begin{eqnarray}
	-\tau\int_\Omega\fk\mr dx &=&-\tau\int_\Omega(1+\tau\rkm)\mr\drk dx\nonumber\\
	&&+\tau^2\int_\Omega(\rk^{p}-1)(1+\tau\rkm)\mr dx\nonumber\\
	&\equiv&K_{1,k}+K_{2,k}.
	\end{eqnarray}
	Notice that $M(r)$ changes from negative to positive at $1$, and thus we always have
	\begin{equation}
	K_{2,k}\geq 0.
	\end{equation}
	The term $K_{1,k}$ can be written in the form
	\begin{eqnarray}
	K_{1,k}&=&-\tau^2\int_\Omega(1-\varepsilon)\mr\rk^{\varepsilon-2}\nrks dx\nonumber\\
	&&+\beta\tau\int_\Omega\frac{\rk^{1-\varepsilon}+\tau}{(\rk+\tau)^n}\rk^{\beta+\varepsilon-2}\nrks dx.\label{133.441}
	\end{eqnarray}
	Let
	$B_k=\{x\in\Omega:\rk(x)\geq 1\}$ be given as before.
	On the set $B_k$, we have
	$$
	\mr\leq
	\frac{\beta}{n-\beta}.$$
	Keeping this in mind, we estimate
	\begin{eqnarray}
	K_{1,k}&\geq&-(1-\varepsilon)\tau^2\int_\Omega\mr\rk^{\varepsilon-2}\nrks dx\nonumber\\
	&\geq&-c\tau^2\int_{B_k}\nrks dx\nonumber\\
	&\geq&-c\tau^2\int_{\Omega}\rk^{p+\beta-2}\nrks dx.\label{13551}
	\end{eqnarray}
	In view of the coefficient of the fourth integral in \eqref{dsa771111},  we just need to select a number $\tau_0$ in $(0,1)$ with the property
	\begin{equation}
	c\tau_0<p\beta,
	\end{equation}
	where $c$ is the same as the one in the last line of \eqref{13551}. Then $K_{1,k}$
	can be absorbed into the fourth term in \eqref{dsa771111}.
	
	By a calculation similar to \eqref{3375}, we have
	\begin{equation}
	M(r)
	\geq\left\{\begin{array}{ll}
	\frac{\beta}{\beta-n}\left[(r+\tau)^{\beta-n}-(1+\tau)^{\beta-n}\right] & \mbox{if $r>1$,}\\
	\frac{\beta}{\beta-n}r^{\beta-n} & \mbox{if $r\leq1$.}	
	\end{array}\right.
	\end{equation}
	Thus we always have
	$$\int_{\Omega}\int^{\overline{u}_j}_{u_0}M(r)dr
	dx\geq -c.$$
	The remaining proof is similar to that of Lemma \ref{l3.3}. 
	The
	proof is complete.
\end{proof}
We are ready to conclude the proof of Theorem 1.2. Since $n>\frac{1}{2}$, we can pick a number $\beta$ with the property
$$\frac{1}{2} <\beta<\min\{1,n\}.$$ Then we apply Lemma \ref{l5.1} to obtain
\begin{equation}
\tau^2\int_{\Omega_T}(1+\tau\nj^{\varepsilon-1})(\nj^p-1)M(\nj)dxds\leq c.
\end{equation}
This combined with the fact that
\begin{equation}
\lim_{r\rightarrow 0^+}M(r)=-\infty
\end{equation}
implies
\begin{equation}
\tau\fj\rightarrow 0 \ \ \ \mbox{strongly in $L^(\Omega_T)$.}
\end{equation}
We can easily infer this from the proof of Lemma \ref{l4.1}. That is, if we replace $K(r)$ with
$M(r)$ in the proof, all the arguments there still work.
By examining the rest of the calculations in the proof of Theorem 1.1, we see that all of them are still applicable here except  \eqref{442}, for which we make some adjustments. To this end,
we set $\alpha=1, \gamma=\frac{1+\beta}{2}, u=\nj$ in \eqref{22.19} to obtain
\begin{eqnarray}
\Delta\nj&=&\frac{2(1-\beta)}{(1+\beta)^2}\nj^{-\beta}|\nabla\nj^{\frac{1+\beta}{2}}|^2+\frac{2}{1+\beta}\nj^{1-\frac{1+\beta}{2}}\Delta\nj^{\frac{1+\beta}{2}}\nonumber\\
&=&\frac{8(1-\beta)}{(1+\beta)^2}\nj^{\frac{1-\beta}{2}}|\nabla\nj^{\frac{1+\beta}{4}}|^2+\frac{2}{1+\beta}\nj^{\frac{1-\beta}{2}}\Delta\nj^{\frac{1+\beta}{2}}.
\end{eqnarray}
Substitute this into the left-hand side of \eqref{442} to obtain
\begin{eqnarray}
\lefteqn{	\tau\int_{\Omega_T}\nj^{\varepsilon+n-2}|\nabla \nj\Delta\nj|dxdt}\nonumber\\
&\leq&\frac{8(1-\beta)}{(1+\beta)^2}\tau\int_{\Omega_T}\nj^{\frac{1-\beta}{2}+\varepsilon+n-2}|\nabla\nj||\nabla\nj^{\frac{1+\beta}{4}}|^2dxdt\nonumber\\
&&+\frac{2}{1+\beta}\tau\int_{\Omega_T}\nj^{\frac{1-\beta}{2}+\varepsilon+n-2}|\nabla\nj\Delta\nj^{\frac{1+\beta}{2}}|dxdt\nonumber\\
&\equiv&A_1+A_2.
\end{eqnarray}
We  estimate $A_2$ to yield
\begin{eqnarray}
A_2&\leq&c\left(\int_{\Omega_T}\tau^2\nj^{\beta+\varepsilon-3+2(n-\beta)+\varepsilon}|\nabla\nj|^2dxdt\right)^{\frac{1}{2}}\left(\int_{\Omega_T}|\Delta\nj^{\frac{1+\beta}{2}}|^2dxdt
\right)^{\frac{1}{2}} \nonumber\\
&\leq&c\left(\int_{\Omega_T}\tau^2\nj^{\beta+\varepsilon-3+2(n-\beta)+\varepsilon}|\nabla\nj|^2dxdt\right)^{\frac{1}{2}}. 
\end{eqnarray}
Thus if the exponent $\beta+\varepsilon-3+2(n-\beta)+\varepsilon<0$, then there holds
the inequality
\begin{eqnarray}
\nj^{\beta+\varepsilon-3+2(n-\beta)+\varepsilon}&=&\left(\frac{1}{\nj}\right)^{-(\beta+\varepsilon-3)-[2(n-\beta)+\varepsilon]}\nonumber\\
&\leq&\delta \left(\frac{1}{\nj}\right)^{-(\beta+\varepsilon-3)}+c(\delta)\nonumber\\
&=&\delta \nj^{\beta+\varepsilon-3}+c(\delta).
\end{eqnarray}
Consequently, we can deduce from Lemma \ref{l5.1} that
\begin{eqnarray}
\limsup_{\tau\rightarrow 0}A_2&\leq&\limsup_{\tau\rightarrow 0}c\left(\delta\int_{\Omega_T}\tau^2\nj^{\beta+\varepsilon-3}|\nabla\nj|^2dxdt+c(\delta)\tau^2\int_{\Omega_T}|\nabla\nj|^2dxdt\right)^{\frac{1}{2}}\nonumber\\
&\leq &c\delta^{\frac{1}{2}}.
\end{eqnarray}
Since $\delta$ is arbitrary, we have $\lim_{\tau\rightarrow 0}	A_2 =0$. 
If the exponent $\beta+\varepsilon-3+2(n-\beta)+\varepsilon\geq0$, then we use the
inequality
$$\nj^{\beta+\varepsilon-3+2(n-\beta)+\varepsilon}\leq \delta\nj^{p-1}+c(\delta).$$
This can be done because from our assumptions we always have $\beta+\varepsilon-3+2(n-\beta)+\varepsilon<p-1$. We can conclude from Lemma \ref{l3.3} that
$\lim_{\tau\rightarrow 0}	A_2 =0$. The term $A_1$ can be handled in the exactly same way.
This completes the proof.

\bigskip

\noindent{\bf Acknowledgment:} Portion of this work was completed while the second author
was visiting Duke University. He would like to express his gratitude for the hospitality of
the hosting institution and the financial support from the KI-Net for his visit. The research of JL was partially supported by
KI-Net NSF RNMS grant No. 1107291 and NSF grant DMS 1514826.

\end{document}